\allowdisplaybreaks \numberwithin{equation}{section}
\numberwithin{equation}{section}
\newtheorem{theorem}{Theorem}[section]
\newtheorem{corollary}[theorem]{Corollary}
\newtheorem{lemma}[theorem]{Lemma}
\newtheorem*{Yudovich's Theorem}{Yudovich's Theorem}
\theoremstyle{definition}
\theoremstyle{remark}
\newtheorem{remark}[theorem]{Remark}
\begin{document}

\title
[Asymptotic estimates for concentrated  vortex pairs]{Asymptotic estimates for concentrated vortex pairs}

 \author{Guodong Wang}

\address{Institute for Advanced Study in Mathematics, Harbin Institute of Technology, Harbin 150001, P.R. China}
\email{wangguodong@hit.edu.cn}


\begin{abstract}
In [Comm. Math. Phys. 324 (2013), 445--463], Burton-Lopes Filho-Nussenzveig Lopes studied the existence and stability of slowly traveling vortex pairs as maximizers of  the kinetic energy penalized by the impulse relative to a prescribed rearrangement class. In this paper, we prove that after a suitable scaling transformation the  maximization problem studied by Burton-Lopes Filho-Nussenzveig Lopes  in fact gives rise to a family of stable concentrated  vortex pairs approaching a pair of point vortices with equal magnitude and opposite signs.
The key ingredient of the proof is to deduce a uniform bound for the size of the supports of the scaled maximizers. 
 \end{abstract}

\maketitle

\section{Introduction and main results}
\subsection{Vorticity equation and vortex pair}
The motion of an incompressible inviscid fluid in a two-dimensional domain $D\subset\mathbb R^2$  can be described by the following Euler equations 
\begin{equation}\label{euler}
\begin{cases}
\partial_t\mathbf v+(\mathbf v\cdot\nabla)\mathbf v=-\nabla P, &\mathbf x=(x_1,x_2)\in D, \,t>0,\\
\nabla\cdot\mathbf v=0,
\end{cases}
\end{equation}
where $\mathbf v=(v_1,v_2)$ represents the velocity and $P$  the scalar pressure. Define the scalar vorticity $\omega=\partial_{x_1}v_2-\partial_{x_2} v_1$. Taking the curl on both sides of the  first equation of \eqref{euler} gives
\begin{equation}\label{vor}
\partial_t\omega+\mathbf v\cdot\nabla\omega=0.
\end{equation}
With suitable boundary condition, $\mathbf v$ can be recovered from $\omega$ via the Biot-Savart law. For example, when $D=\mathbb R^2$ and
$\mathbf v$ vanishes at infinity, the Biot-Savart law is given by
\begin{equation}\label{bsl1}
\mathbf v(t,\mathbf x)=-\frac{1}{2\pi}\int_{\mathbb R^2} \frac{(x-y)^\perp}{|x-y|^2}\omega(t,\mathbf y)d\mathbf y,
\end{equation}
where we used the symbol $\perp$ to denote clockwise rotation through $\pi/2$, i.e., $(z_1,z_2)^\perp=(z_2,-z_1)$ for any $(z_1,z_2)\in\mathbb R^2.$
When $D$ is the upper half-plane, i.e.,
\[D=\Pi:=\{\mathbf x=(x_1,x_2)\in\mathbb R^2\mid x_2>0\},\]
and $\mathbf v$ satisfies
\begin{equation}\label{bd1}
v_2=0 \mbox{ on } \partial\Pi=\{\mathbf x\in\mathbb R^2\mid x_2=0\},\quad |\mathbf v(\mathbf x)|\to 0 \mbox{ as } |\mathbf x|\to+\infty,
\end{equation}
 the Biot-Savart law is
\begin{equation}\label{bsl2}
\mathbf v=(\partial_{x_2}\mathcal G\omega,-\partial_{x_1}\mathcal G\omega),
\end{equation}
where  $\mathcal G$ is an integral operator given by
 \begin{equation}\label{green}
 \mathcal G\omega(\mathbf x)=\frac{1}{2\pi}\int_\Pi \ln\frac{|\mathbf x-\bar{\mathbf y}|}{|\mathbf x-\mathbf y|}\omega(\mathbf y)d\mathbf y,\quad  \bar{\mathbf y}=(y_1,-y_2).
 \end{equation}
Once the Biot-Savart law is determined, \eqref{vor} becomes a nonlinear transport equation with $\omega$ as the single unknown, which is usually called the \emph{vorticity equation}.
By Yudovich \cite{Y},  if the initial vorticity is compactly supported and bounded, then the vorticity equation \eqref{vor} has a unique global solution in the distributional  sense. 
See also Majda-Bertozzi \cite{MB} or Marchioro-Pulvirenti \cite{MP} for a modern proof.

In this paper, we are concerned with a special class of global solutions to the vorticity equation \eqref{vor},  called \emph{traveling vortex pairs}. A vortex pair  exhibits odd symmetry with respect to the $x_1$ axis and travels along the $x_1$ direction at a constant speed without changing its form. More mathematically, a vortex pair with speed $b$ is a solution $\omega$ to the vorticity equation \eqref{vor} with the  following form
\begin{equation}\label{omw}
\omega(t,\mathbf x)=\zeta(x_1-bt,x_2)-\zeta(x_1-bt,-x_2),
\end{equation}
where $\zeta$ is some bounded measurable function  supported in the upper half-plane $\Pi$
 (hence $\zeta=\omega\chi_\Pi,$ where  $\chi_\Pi$ denotes the characteristic function of  $\Pi$).

Given a solution $\omega$ to the vorticity equation \eqref{vor} in $\mathbb R^2$ with the Biot-Savart law \eqref{bsl1},  it is easy to check that  $\omega$ is oddly symmetric with respect to the $x_1$ axis if and only if  $\zeta=\omega\chi_\Pi$ satisfies the vorticity equation \eqref{vor} in $\Pi$ with the Biot-Savart law \eqref{bsl2}.
As a consequence, a traveling vortex pair in the full plane can  be viewed equivalently as a traveling solution to the vorticity equation \eqref{vor} in the upper half-plane with impermeability boundary condition on  $\partial\Pi$ and vanishing velocity at infinity.  For this reason, we shall mainly work in the upper half-plane in the rest of this paper.

By a formal computation, to seek a vortex pair of the form \eqref{omw},  it suffices to require $\zeta$ to satisfy
\begin{equation}\label{ffor}
\zeta=f(\mathcal G\zeta-bx_2)\quad\mbox{in }\Pi
\end{equation}
for some real function $f$.
 In terms of the stream function $\psi=\mathcal G\zeta$, \eqref{ffor} can be written equivalently as 
\begin{equation}\label{nep}
\begin{cases}
-\Delta \psi=f(\psi-bx_2),&\mathbf x\in\Pi,\\
\psi =0,&\mathbf x\in\partial \Pi,\\
|\nabla\psi|\to 0 \mbox{ as }|\mathbf x|\to+\infty.
\end{cases}
\end{equation}
In the field of nonlinear elliptic equations, the study for \eqref{nep} with prescribed $f$ has been an important research topic, and many existence results have been obtained via various methods. See \cite{Nor,SV,Yang1,Yang2} and the references listed therein.

A special case is when $f(s)=s^+$ in \eqref{nep}, where $s^+=\max\{s,0\},$ in which the solution to \eqref{ffor} can be explicitly expressed in terms of the Bessel functions of the first kind. See Burton's paper \cite{B50} for example. In this case, $\zeta$  is positive inside a semicircle and vanishes elsewhere, hence viewed in the full plane the vorticity $\omega$ given by \eqref{omw} is supported inside a circle. This solution was first  introduced by Lamb in \cite{Lamb} and is now usually called Lamb's circular vortex pair.

 Except for Lamb's circular vortex pair,
another important example is a pair of point vortices with equal magnitude and opposite signs traveling with a constant speed.
In this case, $\zeta$ has the form (up to a translation of the time variable)
 \[\zeta=\kappa\delta_{\mathbf z(t)},\quad \mathbf z(t)=(bt, d),\]
where $\kappa$ is a real number representing the vortex magnitude, $2d$ is the distance between the two  point vortices, and $\delta_\mathbf x $ denotes the unit Dirac measure  at $\mathbf x$.
 The vortex magnitude $\kappa$, the traveling speed $b$ and the distance $d$ necessarily satisfy
 \begin{equation}\label{rela1}
 4\pi bd=\kappa.
 \end{equation}
 See \cite{CLZ} for example.

It is worth mentioning that a pair of point vortices is just a formal singular solution of the vorticity equation \eqref{vor}, not in the distributional sense. Whether there exist a family of regular traveling solutions to the vorticity equation approximating a pair of point vortices is an interesting   problem. Related papers include  \cite{CLZ,HM,SV}. See also a recent interesting work \cite{HH} by Hassainia-Hmidi  on the existence of  concentrated asymmetric vortex pairs.

\subsection{Burton-Lopes Filho-Nussenzveig Lopes' results}
An effective way to construct traveling vortex pairs is to maximize the kinetic energy subject to some suitable constraints for the vorticity.  See \cite{B50,B6,B10,CLZ,SV,T} and the references therein. This paper is closely related to  Burton-Lopes Filho-Nussenzveig Lopes' work \cite{B6}, where existence and orbital stability  were proved by maximizing  the kinetic energy penalized by a small multiple impulse relative to all equimeasurable rearrangements of a given function. For the reader's convenience, we recall their results below.

Throughout this paper, denote by $L_{\rm b}^p(\Pi)$  the set of functions in $L^p(\Pi)$ with bounded support,   ${\rm supp}(\varrho)$   the essential support of $\varrho$ (see \cite{LL}, p.13 for the  definition of essential support of a measurable function), and $|\cdot|$   the two-dimensional Lebesgue measure.

Let $p\in(2,+\infty)$ be   fixed.  Consider a function $\varrho$  satisfying
 \begin{equation}\label{rho}
 \varrho \in L_{\rm b}^p(\Pi),\quad\varrho\geq 0\,\, \mbox { a.e. in }\Pi.
   \end{equation}
Denote by $\mathcal R(\varrho)$   the set of all equimeasurable  rearrangements of $\varrho$ in $\Pi,$ i.e.,
\[\mathcal R(\varrho)=\{v\in L^1(\Pi)\mid |\{\mathbf x\in \Pi\mid v(\mathbf x)>s\}| =|\{\mathbf x\in \Pi\mid \varrho(\mathbf x)>s\}|\,\,\forall\,s\in\mathbb R\},\]
and $\overline{\mathcal R(\varrho)^W}$ by the closure of $\mathcal R(\varrho)$ in the weak topology of $L^p(\Pi).$

Define the kinetic energy functional $E$ and the impulse functional $I$  by setting
\[E(v)=\frac{1}{2}\int_\Pi v(\mathbf x)\mathcal Gv(\mathbf x)d\mathbf x,\quad I(v)=\int_\Pi x_2v(\mathbf x)d\mathbf x.\]
In \cite{B6},  Burton-Lopes Filho-Nussenzveig Lopes studied the following maximization problem
\begin{equation}\label{mm1}
S_\lambda=\sup_{v\in \overline{\mathcal R(\varrho)^W}}(E-\lambda I)(v),
\end{equation}
where $\lambda $ is a positive  constant. In view of Lemma 1 in \cite{B11}, it is easy to check that $S_\lambda<+\infty$ for any $\lambda>0$.
Denote
\begin{equation}\label{sigmaa}
\Sigma_\lambda=\{v\in\overline{\mathcal R(\varrho)^W}\mid (E-\lambda I)(v)=S_\lambda\}.
\end{equation}
Burton-Lopes Filho-Nussenzveig Lopes  proved that
there exists some $\lambda_0,$ depending only on $\varrho$, such that for any $\lambda\in(0,\lambda_0),$
 $\varnothing\neq\Sigma_\lambda\subset \mathcal R(\varrho)$. Moreover, for any $\lambda\in(0,\lambda_0)$ the following properties for $\Sigma_\lambda$ hold.
 \begin{itemize}
\item [(a)] Every  $\zeta\in\Sigma_\lambda$ has bounded support and vanishes a.e. in $\{\mathbf x\in\Pi\mid \mathcal G\zeta(\mathbf x)-\lambda x_2\leq 0\}.$

\item[(b)] Every $\zeta\in\Sigma_\lambda$ is a translation of some function that is Steiner-symmetric about the $x_2$ axis (i.e., even in $x_1$ and decreasing in
$|x_1|$).
\item [(c)] For every  $\zeta\in\Sigma_\lambda$, there exist some increasing function $f:\mathbb R\to\mathbb R\cup\{\pm\infty\}$, depending on $\zeta$,  such that $\zeta=f(\mathcal G\zeta-\lambda x_2)$  a.e. in $\Pi$.
\item [(d)] $\Sigma_\lambda$ is orbitally stable in the following sense: for any  $A>|{\rm supp}(\varrho)|$ and  any $\epsilon>0,$ there exists some $\delta>0,$ depending on $\varrho, \lambda, \epsilon$ and $A,$ such that for any $\omega_0\in L_{\rm b}^\infty(\Pi)$ with $|{\rm supp}(\omega_0)|< A$, it holds that
\[\inf_{v\in \Sigma_\lambda}|I(\omega_0)-I(v)|+\inf_{v\in\Sigma_\lambda}\|\omega_0-v\|_{L^2(\Pi)}<\delta\Longrightarrow \inf_{v\in\Sigma_\lambda}\|\omega_t-v\|_{L^2(\Pi)}<\epsilon,\,\,\forall \,t\ge 0,\]
whenever $\omega_t$ is any $L^p$-regular solution to the vorticity equation \eqref{vor}\eqref{bsl2} in $\Pi$ with initial vorticity $\omega_0.$
\end{itemize}

Here by an \emph{$L^p$-regular solution}, we mean a distributional solution $\omega\in L^\infty_{\rm loc}((0,+\infty), L^1\cap L^p(\Pi))$ to the vorticity equation \eqref{vor}\eqref{bsl2} such that   $E(\omega(t,\cdot))$ and  $I(\omega(t,\cdot))$ are constants. 
See \cite{B6} or \cite{B10} for the precise definition. 

\begin{remark}
In view of the property (c), it can be verified that for any $\zeta\in \Sigma_\lambda$,   $\zeta(x_1-\lambda t,x_2)$ solves the vorticity equation in the distributional sense (see \cite{B11}, Section 5 for a detailed proof),  thus yielding a traveling vortex pair with speed $\lambda.$
 
\end{remark}

\begin{remark}
Here we take $\Sigma_\lambda$ as a family of traveling solutions with the fluid velocity vanishing at infinity. This is a little different from the viewpoint in \cite{B6},
where $\Sigma_\lambda$ was viewed as a family of steady vortex pairs in an irrotational background flow approaching a uniform stream at infinity.  These two different viewpoints are equivalent  in mathematics.
\end{remark}

\subsection{Scaled problem and main results}

Although  Burton-Lopes Filho-Nussenzveig Lopes  obtained the existence and stability for slowly traveling vortex pairs, they did not  study the limiting behavior of the set $\Sigma_\lambda$ as the traveling speed $\lambda $  vanishes. We will see this problem will lead to (after a suitable scaling transformation) a family of concentrated traveling vortex pairs approximating a pair of point vortices, which  is of particular interest physically.

Let $\varrho$ satisfy \eqref{rho}. Denote 
\begin{equation}\label{sc8}
\kappa=\int_\Pi\varrho(\mathbf x)d\mathbf x.
\end{equation}
 For any $\varepsilon>0$, define
\begin{equation}\label{sc9}
\varrho^\varepsilon(\mathbf x)=\frac{1}{\varepsilon^2}\varrho\left(\frac{\mathbf x}{\varepsilon}\right),\quad \mathbf x\in\Pi.
\end{equation}
Then   for any $s\in[1,+\infty]$, it holds that 
\begin{equation}\label{sc10}
\|\varrho^\varepsilon\|_{L^s(\Pi)}=\varepsilon^{2/s-2}\|\varrho\|_{L^s(\Pi)}.
\end{equation}
Let $q>0$ be  fixed. Consider the maximization problem
\begin{equation}\label{mm2}
T_{\varepsilon}=\sup_{v\in\overline{\mathcal R(\varrho^\varepsilon)^W}}(E-qI)(v).
\end{equation}
Define
\begin{equation}\label{xi0ff}
\Xi_\varepsilon=\{v\in\overline{\mathcal R(\varrho^\varepsilon)^W}\mid (E-qI)(v)=T_{\varepsilon}\},
\end{equation}
\begin{equation}\label{xi0}
\Xi^0_\varepsilon=\{\zeta\in\Xi_\varepsilon\mid \zeta(x_1,x_2)=\zeta(-x_1,x_2),\,\,\forall\,\mathbf x=(x_1,x_2)\in\Pi\}.
\end{equation}

Our main result is as follows.
\begin{theorem}\label{thm1}
Let $\varrho$ satisfy \eqref{rho} with $p\in(2,+\infty)$, $q>0$ be given, and $\Xi_\varepsilon,\Xi^0_\varepsilon$ be defined as above.
Then there exists some $\varepsilon_0,$ depending only on $\varrho$ and $q$, such that for any $\varepsilon\in(0,\varepsilon_0)$ the following results hold true.
\begin{itemize}
\item [(i)] $\varnothing\neq \Xi_\varepsilon\subset \mathcal R(\varrho^\varepsilon)$. Moreover, for any    $\zeta\in \Xi_\varepsilon$, $\zeta(x_1-q t,x_2)$ solves the vorticity equation \eqref{vor}\eqref{bsl2} in the distributional sense.
\item[(ii)] Each $\zeta\in\Xi_\varepsilon$ is a translation of some  function that is Steiner-symmetric about the $x_2$ axis, has bounded support, vanishes a.e. in $\{\mathbf x\in\Pi\mid \mathcal G\zeta(\mathbf x)-qx_2\leq 0\},$ and satisfies $\zeta=f(\mathcal G\zeta-qx_2)$  a.e. in  $\Pi$ for some increasing function $f:\mathbb R\to\mathbb R\cup\{\pm\infty\}$.
\item[(iii)]  $\Xi_\varepsilon$ is orbitally stable in the following sense: for any  $A>\varepsilon^2|{\rm supp}(\varrho)|$ and   any $\epsilon>0,$ there exists some $\delta>0,$ depending on $\varrho, \varepsilon, \epsilon$ and $A,$ such that for any $\omega_0\in L_{\rm b}^\infty(\Pi)$ with $|{\rm supp}(\omega_0)|< A$, it holds that
\[\inf_{v\in \Xi_\varepsilon}|I(\omega_0)-I(v)|+\inf_{v\in\Xi_\varepsilon}\|\omega_0-v\|_{L^2(\Pi)}<\delta\Longrightarrow \inf_{v\in\Xi_\varepsilon}\|\omega_t-v\|_{L^2(\Pi)}<\epsilon,\,\,\forall \,t\ge 0,\]
whenever $\omega_t$ an $L^p$-regular solution to the vorticity equation \eqref{vor}\eqref{bsl2} in $\Pi$ with initial vorticity $\omega_0.$
\item[(iv)] There exists a positive number $C,$ depending only on $\varrho$ and $q$, such that
\begin{equation}\label{dses}
{\rm diam(supp}(\zeta))\leq C\varepsilon,\,\,\forall\, \zeta\in\Xi_\varepsilon.
\end{equation}
Here ${\rm diam}(\cdot)$ denotes the diameter of some set.
\item[(v)] For any $\zeta\in\Xi^0_\varepsilon$, denote  
\[ \mathbf x^{\zeta,\varepsilon}=\frac{1}{ \kappa}\int_\Pi \mathbf x\zeta(\mathbf x)d\mathbf x.\]
Then as $\varepsilon\to0^+,$ 
 \[\mathbf x^{\zeta,\varepsilon}\to \hat{\mathbf x}=\left(0, \frac{\kappa}{4\pi q}\right), \] 
  uniformly with respect to the choice of  $\zeta\in\Xi^0_{\varepsilon}$. More precisely, for any $\epsilon>0,$ there exists some  $\varepsilon_1\in(0,\varepsilon_0),$ such that for any $\varepsilon\in(0,\varepsilon_1),$ it holds that
\[|\mathbf x^{\zeta,\varepsilon}-\hat{\mathbf x}|<\epsilon,\quad\forall\,\zeta\in\Xi^0_\varepsilon.\]

\item[(vi)] For $\zeta\in\Xi^0_\varepsilon$, extend $\zeta$ to $\mathbb R^2$ such that $\zeta\equiv0$ in the lower half-plane and define 
\[\nu^{\zeta,\varepsilon}(\mathbf x)=\varepsilon^2\zeta(\varepsilon\mathbf x+\mathbf x^{\zeta,\varepsilon}).\] Denote by $\varrho^*$  the symmetric-decreasing rearrangement of $\varrho$ with respect to the origin. Then   $\nu^ {\zeta,\varepsilon}\to\varrho^*$ in $L^p(\mathbb R^2)$ as $\varepsilon\to0^+,$ uniformly with respect to the choice of $\zeta\in\Xi^0_{\varepsilon}$.
More precisely, for any $\epsilon>0,$ there exists some  $\varepsilon_2\in(0,\varepsilon_0),$ such that for any $\varepsilon\in(0,\varepsilon_2),$ it holds that
\[\|\nu^{\zeta,\varepsilon}-\varrho^*\|_{L^p(\mathbb R^2)}<\epsilon,\quad\forall\,\zeta\in\Xi^0_\varepsilon.\]

\end{itemize}
\end{theorem}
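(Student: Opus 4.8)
The plan is to reduce the whole problem to Burton--Lopes Filho--Nussenzveig Lopes' maximization via an exact scaling symmetry, and then to run a concentration analysis whose core is a uniform confinement estimate for the support. The starting observation is that the map $w\mapsto v$, $v(\mathbf x)=\varepsilon^{-2}w(\mathbf x/\varepsilon)$, is a bounded linear bijection of $L^p(\Pi)$ carrying $\mathcal R(\varrho)$ onto $\mathcal R(\varrho^\varepsilon)$, hence the weak closures onto each other. Since the kernel in \eqref{green} depends only on the ratio $|\mathbf x-\bar{\mathbf y}|/|\mathbf x-\mathbf y|$, a change of variables gives $\mathcal Gv(\mathbf x)=\mathcal Gw(\mathbf x/\varepsilon)$, whence $E(v)=E(w)$ and $I(v)=\varepsilon I(w)$, so $(E-qI)(v)=(E-q\varepsilon I)(w)$. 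Thus the scaled problem \eqref{mm2} is exactly \eqref{mm1} at penalization $\lambda=q\varepsilon$: one has $T_\varepsilon=S_{q\varepsilon}$, and the scaling is a bijection between $\Sigma_{q\varepsilon}$ and $\Xi_\varepsilon$. Setting $\varepsilon_0=\lambda_0(\varrho)/q$, parts (i)--(iii) follow by transporting properties (a)--(d) and the traveling-wave Remark through this bijection; only the profile function in (ii) must be rescaled (replaced by $\varepsilon^{-2}f$, still increasing), and (iii) is property (d) applied to the rearrangement class of $\varrho^\varepsilon$.

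The heart of the matter is the behaviour of $\Sigma_\lambda$ as $\lambda\to0^+$, and I would first pin down the asymptotics of $S_\lambda$. Writing $E=E_1+E_2$ with $E_1(w)=-\frac{1}{4\pi}\iint\ln|\mathbf x-\mathbf y|\,w(\mathbf x)w(\mathbf y)$ (the free self-energy) and $E_2(w)=\frac{1}{4\pi}\iint\ln|\mathbf x-\bar{\mathbf y}|\,w(\mathbf x)w(\mathbf y)$ (the image interaction), the Riesz rearrangement inequality gives the uniform bound $E_1(w)\le E_1(\varrho^*)$ for every $w\in\mathcal R(\varrho)$, while Jensen's inequality applied to the concave logarithm yields $E_2(w)\le\frac{\kappa^2}{8\pi}\ln\big(4\bar h^2+2\,\mathrm{Var}(w)\big)$, where $\bar h=I(w)/\kappa$ is the centre-of-mass height and $\mathrm{Var}(w)$ its second moment. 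Testing \eqref{mm1} with $\varrho^*$ translated to height $\kappa/(4\pi\lambda)$ produces the matching lower bound $S_\lambda\ge E_1(\varrho^*)+\frac{\kappa^2}{4\pi}\ln\frac{1}{\lambda}+c_0+o(1)$. Comparing the two, the divergent terms $\frac{\kappa^2}{4\pi}\ln\frac{1}{\lambda}$ cancel, and one reads off, uniformly over $\Sigma_\lambda$, that $\lambda\bar h\to\frac{\kappa}{4\pi}$, that $\mathrm{Var}(w)=O(1)$, and that $E_1(w)\to E_1(\varrho^*)$.

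The remaining and hardest step is to upgrade the $O(1)$ second-moment bound to a genuine uniform bound on $\mathrm{diam}(\mathrm{supp}(w))$; this is the \emph{main obstacle}. Here I would use the profile description $w=f(\psi-\lambda x_2)$, $\psi=\mathcal Gw$: the support is exactly a super-level set $\{\psi-\lambda x_2>\mu_0\}$, and the constraint $|\mathrm{supp}(w)|=|\mathrm{supp}(\varrho)|$ together with the identity $\int_\Pi w(\psi-\lambda x_2)=2E-\lambda I$ and the asymptotics above forces the threshold to be large, $\mu_0\gtrsim\frac{\kappa}{2\pi}\ln\frac{1}{\lambda}$. On the other hand, the second-moment bound keeps a fixed fraction of the mass within an $O(1)$ ball about $(0,\bar h)$; for a point $\mathbf x$ at distance $\rho$ from this bulk, the Green's representation gives $\psi(\mathbf x)\lesssim\frac{\kappa}{2\pi}\ln\big(1+\bar h/\rho\big)$, which can exceed $\lambda x_2+\mu_0\sim\frac{\kappa}{2\pi}\ln\frac{1}{\lambda}$ only when $\rho\lesssim\lambda\bar h=O(1)$. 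Hence no point of the support lies further than $O(1)$ from the bulk, so $\mathrm{diam}(\mathrm{supp}(w))\le C$ uniformly, and rescaling yields \eqref{dses}. I expect reconciling the large threshold $\mu_0$ with the logarithmically slow decay of the half-plane stream function, and making this confinement quantitative and uniform in $\lambda$, to be the principal technical difficulty.

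Part (v) is then immediate: for $\zeta\in\Xi^0_\varepsilon$ the corresponding $w\in\Sigma_{q\varepsilon}$ is even in $x_1$, so its centre of mass is $(0,\bar h)$ and $\mathbf x^{\zeta,\varepsilon}=\varepsilon(0,\bar h)$; since $q\varepsilon\,\bar h\to\frac{\kappa}{4\pi}$, we get $\mathbf x^{\zeta,\varepsilon}\to(0,\frac{\kappa}{4\pi q})=\hat{\mathbf x}$. For part (vi) one checks that $\nu^{\zeta,\varepsilon}(\mathbf x)=w(\mathbf x+(0,\bar h))$ is simply the maximizer recentred at its centre of mass, hence lies in $\mathcal R(\varrho)$. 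Because $E_1(w)\to E_1(\varrho^*)=\max_{\mathcal R(\varrho)}E_1$ and the maximizer of $E_1$ over $\mathcal R(\varrho)$ is, by the equality case of the Riesz inequality, a translate of $\varrho^*$, any weak-$L^p$ limit point of $\nu^{\zeta,\varepsilon}$ must equal $\varrho^*$; as all these functions share the distribution of $\varrho$, weak convergence together with equality of $L^p$ norms upgrades to strong convergence in $L^p(\mathbb R^2)$. Uniformity with respect to $\zeta$ in both (v) and (vi) follows by a standard contradiction-and-compactness argument, extracting a violating sequence $\zeta_n\in\Xi^0_{\varepsilon_n}$ and applying the above to reach a contradiction.
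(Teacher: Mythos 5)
Your overall strategy --- reduce to the Burton--Lopes Filho--Nussenzveig Lopes problem \eqref{mm1} by the scaling $v(\mathbf x)=\varepsilon^{-2}w(\mathbf x/\varepsilon)$ and then run a Turkington-type energy comparison to force concentration --- is the same as the paper's, and your treatment of (i)--(iii), as well as the deduction of (v)--(vi) \emph{once confinement is known}, is essentially sound. The genuine gap is in your second paragraph, and it undermines the third. From $E_1(w)\le E_1(\varrho^*)$ and the Jensen bound $E_2(w)\le\frac{\kappa^2}{8\pi}\ln\bigl(4\bar h^2+2\,\mathrm{Var}(w)\bigr)$, comparison with the test profile yields only
\[
\frac{\kappa^2}{8\pi}\ln\Bigl(\lambda^2\bigl(4\bar h^2+2\,\mathrm{Var}(w)\bigr)\Bigr)-\lambda\kappa\bar h\ \ge\ c_0+o(1),
\]
which is a \emph{lower} bound on $4\bar h^2+2\,\mathrm{Var}(w)$; it is perfectly consistent with, say, $\mathrm{Var}(w)\sim\lambda^{-2}$ and $\bar h$ anywhere up to $C\lambda^{-1}\ln\lambda^{-1}$. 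So one cannot ``read off'' $\mathrm{Var}(w)=O(1)$, $\lambda\bar h\to\kappa/(4\pi)$, or $E_1(w)\to E_1(\varrho^*)$ at this stage: to rule out spreading you would need a quantitative deficit estimate showing that large variance forces $E_1(w)$ strictly below $E_1(\varrho^*)$ by a controlled amount, and Jensen gives nothing of the sort. Since your confinement argument opens with ``a fixed fraction of the mass lies in an $O(1)$ ball about $(0,\bar h)$'', it has no foundation; moreover, even granting a bulk, your pointwise bound $\psi(\mathbf x)\lesssim\frac{\kappa}{2\pi}\ln(1+\bar h/\rho)$ only controls the contribution of the bulk, not of the (possibly substantial) mass outside it, whose image-interaction term requires an a priori bound on $x_2$ over the support.

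The paper establishes confinement \emph{before} any center-of-mass or variance information, and in a specific order you cannot shortcut: first the lower bound on the multiplier $\mu_\zeta$ (Lemmas \ref{lem301}--\ref{lem303}, where the upper bound on the core energy comes from a Sobolev argument exploiting $|V_\zeta|=\varepsilon^2|\{\varrho>0\}|$, which you gesture at but do not prove); then a pointwise upper bound $\mathcal G\zeta\le-\frac{\kappa}{2\pi}\ln\varepsilon+C(1+\ln x_2)$ giving the vertical bound $x_2\le C_5$ on the support (Lemmas \ref{lem304}--\ref{lem305}); then --- and this is what your sketch is missing entirely --- a separate horizontal decay estimate $\mathcal G\zeta\lesssim\varepsilon^{2/p-2}|x_1|^{-1/(2p)}$ (Burton's Lemma \ref{lemm4}) yielding the rough bound ${\rm diam}(V_\zeta)\le C\varepsilon^{4-4p}$; and finally a contradiction/pigeonhole argument (Lemma \ref{lem307}) that bootstraps this to a uniform $O(1)$ diameter, after which the $O(\varepsilon)$ bound, the limit of the centers, and the profile convergence follow. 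Without some a priori control in the $x_1$ direction there is no ``bulk'' to measure distances from, so your argument cannot get started; I would regard the variance claim and the absent horizontal estimate as the two concrete points to repair.
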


By (iv) and (v) in Theorem \ref{thm1}, we see that as $\varepsilon\to0^+$ ${\rm supp}(\zeta)$  ``shrinks" to $\hat{\mathbf x}$  uniformly with respect to the choice of  $\zeta\in\Xi^0_{\varepsilon}$.    More precisely, for any $r>0$, there exists some $\varepsilon_3\in(0,\varepsilon_0),$ such that for any $\varepsilon\in(0,\varepsilon_3),$ it holds that
\[{\rm supp}(\zeta)\subset B_r(\hat{\mathbf x}),\quad\forall\,\zeta\in\Xi^0_\varepsilon.\]

In Theorem \ref{thm1}, the assertions (i)-(iii)  are easy consequences of Burton-Lopes Filho-Nussenzveig Lopes' results. Our main purpose in this paper is to prove the asymptotic estimates (iv)-(vi). 
Our method of proof is mostly inspired by  Turkington's paper   \cite{T}, where concentrated Euler flows with piecewise  constant vorticity  were constructed by solving a similar maximization problem.
The key point in Turkington's method was to obtain a suitable lower bound for the Lagrangian multiplier, which was  achieved by deducing a suitable lower bound for the  energy in the whole domain and a uniform upper bound for the energy on the vortex core. See also \cite{CLZ,CWZ,CW1,CW2,EM0, EM} for some further applications or developments of this method.
 In this paper, following Turkington's idea, it is not hard to get the estimate for the  Lagrangian multiplier $\mu_\zeta$  (defined by \eqref{lmp} in Section 3). However, even with the desired estimate for $\mu_\zeta$, we are still not able to prove (iv)-(vi)
 due to the unboundedness of the upper half-plane.
 To overcome this difficulty, we need to accomplish the most important step, i.e., to derive a uniform bound for the diameter of the supports of the maximizers (see Lemma \ref{lem307}),
which  is also  the hardest part of the proof. This  step is achieved by using an adaption of Turkington's method  and improving  some    estimates first proved by Burton  in \cite{B6,B11}.

 An alternative way to construct concentrated vortex pairs is to maximize $E-qI$ over
 \[\{v\in\mathcal R(\varrho^\varepsilon)\mid {\rm supp}(v)\subset B_{r_0}(\hat{ \mathbf x}),\,\,v\mbox{ is Steiner-symmetric about the $x_2$ axis}\},\]
 where $r_0$ is a fixed  positive number. 
In this case, the difficulty caused by the unboundedness of the upper half-plane can be avoided. However, as the cost, proving stability   becomes difficult. We will discuss this issue in detail  in Section 4.

As a consequence of Theorem \ref{thm1}, we can easily prove the following asymptotic behavior for the vortex pairs obtained by Burton-Lopes Filho-Nussenzveig Lopes as the traveling speed vanishes.
For convenience, we denote 
\begin{equation}\label{sigma0}
\Sigma^0_\lambda=\{\zeta\in\Sigma_\lambda\mid \zeta(x_1,x_2)=\zeta(-x_1,x_2),\,\,\forall\,\mathbf x=(x_1,x_2)\in\Pi\}.
\end{equation}

\begin{corollary}\label{coro1}
Let $\varrho$ satisfy \eqref{rho} with $p\in(2,+\infty)$,   and $\Sigma_\lambda,\Sigma_\lambda^0$ be defined by \eqref{sigmaa}, \eqref{sigma0}.
Then the following properties  hold true.
\begin{itemize}
\item[(i)] There exists some  $C>0$, depending only on $\varrho$, such that
\[{\rm diam(supp}(\zeta)) \leq C,\quad\forall\,\zeta\in\Sigma_\lambda.\]
\item[(ii)] Denote   
\[\mathbf x^{\zeta,\lambda}= \frac{1}{\kappa}\int_\Pi \mathbf x\zeta(\mathbf x)d\mathbf x.\]
Then for $\zeta\in\Sigma^0_\lambda$, 
$\mathbf x^{\zeta,\lambda}=(x_1^{\zeta,\lambda}, x_2^{\zeta,\lambda})$ satisfies 
\[x_1^{\zeta,\lambda}\equiv0,\quad\lim_{\lambda\to0^+}\lambda x_2^{\zeta,\lambda}=\frac{ \kappa }{4\pi},\]  
where the convergence is uniform  with respect to the choice of  $\zeta\in\Sigma^0_{\lambda}$, i.e., for any $\epsilon>0,$ there exists some  $\lambda_1\in(0,\lambda_0),$ such that for any $\lambda\in(0,\lambda_1),$ it holds that
\[\left|\lambda x_2^{\zeta,\lambda}-\frac{\kappa}{4\pi}\right|<\epsilon,\quad\forall\,\zeta\in\Sigma^0_\lambda.\]

\item[(iii)] For $\zeta\in\Sigma^0_\lambda$, extend $\zeta$ to $\mathbb R^2$ such that $\zeta=0$ in the lower half-plane and define  $\nu^{\zeta,\lambda}(\mathbf x)=\zeta(\mathbf x+\mathbf x^{\zeta,\lambda})$. Then  
$\nu^{\zeta,\lambda}\to\varrho^*$ in $L^p(\mathbb R^2)$ as $\lambda\to0^+,$ uniformly with respect to the choice of $\zeta\in\Sigma_\lambda^0.$ More precisely, for any $\epsilon>0,$ there exists some  $\lambda_2\in(0,\lambda_0),$ such that for any $\lambda\in(0,\lambda_2),$ it holds that
\[\|\nu^{\zeta,\lambda}-\varrho^*\|_{L^p(\mathbb R^2)}<\epsilon,\quad\forall\,\zeta\in\Sigma^0_\lambda.\]

\end{itemize}
\end{corollary}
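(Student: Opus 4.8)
The plan is to reduce Corollary \ref{coro1} entirely to Theorem \ref{thm1} by means of an explicit scaling correspondence between the two maximization problems. Fix any $q>0$ once and for all, and, given $\lambda\in(0,\lambda_0)$, set $\varepsilon=\lambda/q$. For $v\in L_{\rm b}^p(\Pi)$ let $v^\varepsilon(\mathbf x)=\varepsilon^{-2}v(\mathbf x/\varepsilon)$ as in \eqref{sc9}. The first step is to record the change-of-variables identities
\begin{equation*}
\mathcal Gv^\varepsilon(\mathbf x)=\mathcal Gv(\mathbf x/\varepsilon),\qquad E(v^\varepsilon)=E(v),\qquad I(v^\varepsilon)=\varepsilon I(v),
\end{equation*}
the first coming from the scaling invariance of the kernel in \eqref{green} and the other two from the substitution $\mathbf x=\varepsilon\mathbf w$. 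Hence $(E-qI)(v^\varepsilon)=(E-\lambda I)(v)$, which is exactly what forces the relation $\lambda=q\varepsilon$.

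The second step is to upgrade this pointwise identity to a correspondence of the variational problems. The map $v\mapsto v^\varepsilon$ is a bijection of $L^p(\Pi)$ that sends $\mathcal R(\varrho)$ onto $\mathcal R(\varrho^\varepsilon)$, is continuous for the weak topology, and preserves both Steiner symmetry and evenness in $x_1$; consequently it carries $\overline{\mathcal R(\varrho)^W}$ onto $\overline{\mathcal R(\varrho^\varepsilon)^W}$ and the functional $E-\lambda I$ onto $E-qI$. It follows that $S_\lambda=T_\varepsilon$ and that the bijection restricts to $\Sigma_\lambda\leftrightarrow\Xi_\varepsilon$ and $\Sigma^0_\lambda\leftrightarrow\Xi^0_\varepsilon$. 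Since $\varepsilon=\lambda/q\to0^+$ as $\lambda\to0^+$, every uniform limit in $\varepsilon$ furnished by Theorem \ref{thm1} becomes a uniform limit in $\lambda$.

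With this correspondence in hand, each assertion follows by tracking the scaling on supports and first moments. Writing $\eta:=\zeta^\varepsilon\in\Xi_\varepsilon$ for $\zeta\in\Sigma_\lambda$, one has ${\rm supp}(\eta)=\varepsilon\,{\rm supp}(\zeta)$ and, after the substitution $\mathbf x=\varepsilon\mathbf w$, $\mathbf x^{\eta,\varepsilon}=\varepsilon\,\mathbf x^{\zeta,\lambda}$. For (i), Theorem \ref{thm1}(iv) gives ${\rm diam}({\rm supp}(\eta))\le C\varepsilon$, so ${\rm diam}({\rm supp}(\zeta))\le C$ with $C$ depending only on $\varrho$ (as $q$ is held fixed). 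For (ii), evenness of $\zeta\in\Sigma^0_\lambda$ in $x_1$ forces $x_1^{\zeta,\lambda}\equiv0$, while Theorem \ref{thm1}(v) yields $\mathbf x^{\eta,\varepsilon}\to(0,\kappa/(4\pi q))$; substituting $\varepsilon=\lambda/q$ into $\varepsilon\,\mathbf x^{\zeta,\lambda}=\mathbf x^{\eta,\varepsilon}$ and multiplying by $q$ gives $\lambda x_2^{\zeta,\lambda}\to\kappa/(4\pi)$, uniformly. For (iii), the renormalized profiles coincide outright, since
\[\nu^{\zeta,\lambda}(\mathbf x)=\zeta(\mathbf x+\mathbf x^{\zeta,\lambda})=\varepsilon^2\eta\bigl(\varepsilon\mathbf x+\varepsilon\,\mathbf x^{\zeta,\lambda}\bigr)=\varepsilon^2\eta\bigl(\varepsilon\mathbf x+\mathbf x^{\eta,\varepsilon}\bigr)=\nu^{\eta,\varepsilon}(\mathbf x),\]
so the $L^p$-convergence $\nu^{\eta,\varepsilon}\to\varrho^*$ of Theorem \ref{thm1}(vi) is precisely the claim.

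The proof involves no new analytic difficulty; the only points requiring care are bookkeeping. The decisive one is verifying that the scaling is exactly energy-preserving while rescaling the impulse by the single factor $\varepsilon$, since this is what makes $v\mapsto v^\varepsilon$ an exact isomorphism of the two penalized problems and pins down $\lambda=q\varepsilon$. One must also confirm that this map respects the weak closures $\overline{\mathcal R(\cdot)^W}$ so that the maximizer sets, and not merely the suprema, correspond. Once these are established the hard work is already contained in Theorem \ref{thm1}(iv)—the uniform support bound—and nothing further is needed. As a consistency check, the limit $\kappa/(4\pi)$ in (ii) is manifestly independent of the auxiliary parameter $q$, as it must be.
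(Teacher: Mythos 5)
Your proposal is correct and follows essentially the same route as the paper, which proves the corollary by invoking Theorem \ref{thm1} together with the scaling correspondence of Lemma \ref{lem31} (the paper simply fixes $q=1$, $\varepsilon=\lambda$, whereas you keep $q$ arbitrary and set $\varepsilon=\lambda/q$). The change-of-variables identities, the correspondence $\Sigma_\lambda\leftrightarrow\Xi_\varepsilon$, and the bookkeeping for supports, first moments, and renormalized profiles all match the paper's argument, just written out in more detail.
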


This paper is organized as follows. In Section 2, we give some preliminaries for later use. In Section 3, we give the proofs of Theorem \ref{thm1} and Corollary \ref{coro1}. In Section 4, we present some further discussions.

\section{Preliminaries}
In this section, we give some preliminaries results for later use. 
\begin{lemma}[\cite{B6}, Lemma 10]\label{lemm1}
Let  $w \in L_{\rm b}^p(\Pi)$ be nonnegative and $\theta$ be a positive number. Denote $\mathcal M_\theta $  the set of maximizers of $E-\theta I$ over $\overline{\mathcal R(w)^W}$. Then there exists some $\theta_0>0,$ depending only on $w,$ such that for any $\theta\in(0,\theta_0),$ it holds that $\varnothing\neq \mathcal M_\theta\subset \mathcal R(w).$
\end{lemma}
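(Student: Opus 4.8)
The natural route is the direct method of the calculus of variations combined with Burton's theory of rearrangements, for which I would invoke three structural facts (all due to Burton, cf.\ \cite{B6}). For $p\in(1,\infty)$ the weak closure $\overline{\mathcal R(w)^W}$ is convex and sequentially weakly compact in $L^p(\Pi)$, and its extreme points are exactly $\mathcal R(w)$. The energy admits the Dirichlet form $E(v)=\frac12\int_\Pi|\nabla\mathcal Gv|^2d\mathbf x$, so it is a nonnegative convex quadratic functional and $E-\theta I$ is convex; moreover $E$ is sequentially weakly continuous along impulse-bounded sequences drawn from $\overline{\mathcal R(w)^W}$, through the smoothing of $\mathcal G$. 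Finally, once a maximizer is shown to exist, its profile is pinned down by Burton's maximization principle for linear functionals over rearrangement classes.

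For nonemptiness I would argue by the direct method. Finiteness $S_\theta:=\sup_{\overline{\mathcal R(w)^W}}(E-\theta I)<+\infty$, together with the accompanying energy--impulse estimate of \cite{B11}, provides a maximizing sequence $v_n$ with bounded impulse, from which I extract $v_n\rightharpoonup\zeta\in\overline{\mathcal R(w)^W}$. The essential difficulty here is that $I(v)=\int_\Pi x_2 v\,d\mathbf x$ carries the \emph{unbounded} weight $x_2$ on the \emph{unbounded} domain $\Pi$, so $I$ is not weakly continuous and mass may leak to $x_2=+\infty$. Nevertheless, testing against the truncations $x_2\chi_{B_R}\in L^{p'}(\Pi)$ and letting $R\to\infty$ shows that impulse is weakly lower semicontinuous, $I(\zeta)\le\liminf_n I(v_n)$ (here $v_n\ge0$ is used). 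Combined with the weak continuity of $E$, this makes $E-\theta I$ weakly upper semicontinuous, so $(E-\theta I)(\zeta)\ge\limsup_n(E-\theta I)(v_n)=S_\theta$ and $\zeta\in\mathcal M_\theta$.

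The same leakage phenomenon explains why the smallness $\theta<\theta_0$ is indispensable for the inclusion $\mathcal M_\theta\subset\mathcal R(w)$: since $1\notin L^{p'}(\Pi)$, total mass is not weakly conserved, so $0\in\overline{\mathcal R(w)^W}$ with $(E-\theta I)(0)=0$, and whenever $S_\theta=0$ the degenerate element $0\notin\mathcal R(w)$ is itself a maximizer. To exclude this I would take $w\in\mathcal R(w)$ (which has bounded support and $E(w)>0$) and set $\theta_0=E(w)/I(w)$, so that $S_\theta\ge(E-\theta I)(w)>0$ for every $\theta\in(0,\theta_0)$; then each maximizer $\zeta$ has $E(\zeta)>0$, in particular $\zeta\neq0$. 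To upgrade $\zeta$ from the closure to $\mathcal R(w)$ I would use convexity: a maximizer of the convex functional $E-\theta I$ over the convex set $\overline{\mathcal R(w)^W}$ must also maximize its linearization $v\mapsto\int_\Pi(\mathcal G\zeta-\theta x_2)v\,d\mathbf x$ over the same set. Burton's theorem then yields an increasing $f$ with $\zeta=f(\mathcal G\zeta-\theta x_2)\in\mathcal R(w)$, the identification being forced once the level sets of $g:=\mathcal G\zeta-\theta x_2$ are null; on the interior of any level set $\mathcal G\zeta=c+\theta x_2$ gives $\zeta=-\Delta\mathcal G\zeta=0$, which severely constrains the possible plateaus. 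Finally, since $\mathcal G\zeta$ is bounded and decaying while $g\to-\infty$ as $x_2\to\infty$, the set $\{\zeta>0\}$ is automatically bounded.

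The step I expect to be the main obstacle is the rigorous interplay between the unbounded weight $x_2$ and the weak topology: proving that the maximizer retains the \emph{full} mass of $w$ (ruling out \emph{partial} escape of mass, not merely total escape), and justifying that the linearized functional $v\mapsto\int_\Pi(\mathcal G\zeta-\theta x_2)v$, whose gradient fails to lie in $L^{p'}(\Pi)$, is nonetheless governed by Burton's rearrangement theorem. These are precisely the points for which I would rely on the quantitative energy--impulse estimates of \cite{B6,B11}.
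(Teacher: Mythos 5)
The paper does not prove this statement at all: it is imported verbatim as Lemma~10 of \cite{B6}, so there is no internal proof to compare against. Judging your proposal on its own merits, the existence half is essentially right and matches the standard route (finiteness of the supremum plus the energy--impulse estimate of \cite{B11} gives a maximizing sequence with bounded impulse; $E$ is weakly sequentially continuous on such sets, and your truncation argument for the weak lower semicontinuity of $I$ on nonnegative functions is correct, so a maximizer exists in $\overline{\mathcal R(w)^W}$).

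The inclusion $\mathcal M_\theta\subset\mathcal R(w)$, however, is left with a genuine gap at exactly the point you flag as ``the main obstacle.'' Your choice $\theta_0=E(w)/I(w)$ only guarantees $S_\theta>0$, hence that a maximizer $\zeta$ is not identically zero; it does not rule out \emph{partial} escape of mass, i.e.\ a maximizer lying in $\overline{\mathcal R(w)^W}\setminus\mathcal R(w)$ with $0<\|\zeta\|_{L^1}<\|w\|_{L^1}$. The linearization step does not repair this: Burton's theorem on maximizing $v\mapsto\int_\Pi(\mathcal G\zeta-\theta x_2)v\,d\mathbf x$ over $\overline{\mathcal R(w)^W}$ returns an element of $\mathcal R(w)$ only when the super-level set $\{\mathcal G\zeta-\theta x_2>0\}$ has measure at least $|\{w>0\}|$; otherwise the linear functional is maximized by depositing only the largest values of $w$ there and discarding the rest to the weak closure, which is precisely the degenerate scenario you need to exclude. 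Showing that this set is large enough (equivalently, producing for every $\zeta\notin\mathcal R(w)$ a competitor in $\mathcal R(w)$ with strictly larger value of $E-\theta I$) is where the smallness of $\theta$ must enter \emph{quantitatively}, via the energy and impulse estimates of \cite{B6,B11}; this is the actual content of Lemma~10 of \cite{B6} and it is not supplied by your argument. A secondary caveat: the assertion that the extreme points of $\overline{\mathcal R(w)^W}$ are exactly $\mathcal R(w)$ is a bounded-domain statement and should not be invoked on $\Pi$ without justification (though your argument does not ultimately rely on it).
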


\begin{lemma}[\cite{B6}, Theorem 2]\label{lemm2}
Let  $w\in L_{\rm b}^p(\Pi)$ be nonnegative and $\theta$ be a positive number. Denote $\mathcal M_\theta $  the set of maximizers of $E-\theta I$ over $\overline{\mathcal R(w)^W}$.  If
$  \mathcal M_\theta\subset \mathcal R(w),$ then $\mathcal M_\theta\neq\varnothing,$
and each  $\zeta\in \mathcal M_\theta$ is a translation of some function that is Steiner-symmetric about the $x_2$ axis, has bounded support, vanishes a.e. in $\{\mathbf x\in\Pi\mid \mathcal G\zeta-\theta x_2\leq 0\},$ and satisfies $\zeta=f(\mathcal G\zeta-\theta x_2)$  a.e. in $\Pi$ for some increasing function $f:\mathbb R\to\mathbb R\cup\{\pm\infty\}$.

\end{lemma}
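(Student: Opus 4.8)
The plan is to exploit the convexity of $J:=E-\theta I$ to reduce the description of each maximizer to a \emph{linear} maximization over the rearrangement class. Since $\mathcal G$ is positive and self-adjoint, $E(v)=\tfrac12\int_\Pi v\,\mathcal Gv$ is a convex quadratic form and $-\theta I$ is linear, so $J$ is convex; moreover $\overline{\mathcal R(w)^W}$ is convex and weakly compact, and by Burton's theory of rearrangements its extreme points are exactly the elements of $\mathcal R(w)$. Writing any $v\in\overline{\mathcal R(w)^W}$ as an average of extreme points and applying Jensen's inequality to the convex $J$ gives $S_\theta=\sup_{\mathcal R(w)}J$, so it suffices to analyse maximizers lying in $\mathcal R(w)$ --- which is exactly what the hypothesis $\mathcal M_\theta\subset\mathcal R(w)$ provides.

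Fix $\zeta\in\mathcal M_\theta\subset\mathcal R(w)$. First I would derive the Euler--Lagrange structure from a first-order condition. For any $v\in\mathcal R(w)$ the segment $(1-t)\zeta+tv$ stays in $\overline{\mathcal R(w)^W}$ and $t\mapsto J((1-t)\zeta+tv)$ is maximized at $t=0$; since $E$ is quadratic and $I$ linear, its right derivative there equals $\int_\Pi(\mathcal G\zeta-\theta x_2)(v-\zeta)$ (all integrals finite because $v,\zeta$ have bounded support), and maximality forces this to be $\le 0$. Hence $\zeta$ maximizes the linear functional $v\mapsto\int_\Pi g_\zeta\,v$ over $\mathcal R(w)$, where $g_\zeta:=\mathcal G\zeta-\theta x_2$. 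Burton's characterization of maximizers of a linear functional over a rearrangement class then produces an increasing $f$ with $\zeta=f(g_\zeta)$ a.e., which is the relation $\zeta=f(\mathcal G\zeta-\theta x_2)$. Although $x_2\notin L^{p'}(\Pi)$, this is harmless: the characterization is only needed on the bounded set where $\zeta$ is positive.

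The vanishing and support properties I would extract from this monotone structure together with the sizes of the super-level sets of $g_\zeta$. Since $\mathcal G\zeta\ge 0$ decays at infinity while $\theta x_2\to+\infty$, the set $\{g_\zeta>t\}$ contains a horizontal strip and hence has infinite measure whenever $t<0$, whereas $\{\zeta>0\}$ has finite measure $|{\rm supp}(w)|$. As the positivity set of $\zeta=f(g_\zeta)$ is a super-level set $\{g_\zeta>\gamma\}$ up to a null set, its finiteness forces $\gamma\ge 0$, so $\zeta$ vanishes a.e. on $\{g_\zeta\le 0\}$. To upgrade this to bounded support I would use the dipole decay of the half-plane Green function: for large $|\mathbf x|$ one has $\mathcal G\zeta(\mathbf x)\le C x_2/|\mathbf x|^2$, so $g_\zeta(\mathbf x)\ge 0$ forces $|\mathbf x|^2\le C/\theta$, confining ${\rm supp}(\zeta)\subseteq\{g_\zeta\ge 0\}$ to a bounded region.

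For the Steiner symmetry I would symmetrize in $x_1$: this preserves $\mathcal R(w)$ and $I$, and since the kernel $\ln(|\mathbf x-\bar{\mathbf y}|/|\mathbf x-\mathbf y|)$ is strictly decreasing in $|x_1-y_1|$, Riesz's rearrangement inequality shows it does not decrease $E$; applied to a maximizer it must leave $E$ unchanged, and the equality case forces $\zeta$ to be an $x_1$-translate of a Steiner-symmetric function. The remaining point, and the one I expect to be the \emph{main obstacle}, is nonemptiness of $\mathcal M_\theta$: here I would take a maximizing sequence in $\mathcal R(w)$, replace each member by its Steiner symmetrization, and pass to a weak limit. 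The difficulty is the loss of compactness from translation invariance in $x_1$, which a priori allows mass to escape to $|x_1|=\infty$. I expect this to be overcome by a concentration-compactness argument: the penalty $-\theta I$ excludes upward escape, while the energy's strict preference for concentration --- two widely separated pieces interact negligibly and together carry strictly less energy than the assembled profile --- excludes dichotomy, so the symmetric maximizing sequence is tight. Its weak limit then attains $S_\theta$ and, by the hypothesis $\mathcal M_\theta\subset\mathcal R(w)$, lies in $\mathcal R(w)$, giving $\mathcal M_\theta\ne\varnothing$.
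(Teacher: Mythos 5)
First, a framing remark: the paper offers no proof of this lemma at all --- it is quoted verbatim from [B6, Theorem 2] (Burton--Lopes Filho--Nussenzveig Lopes), so your attempt can only be measured against their argument. Your overall architecture (convexity of $E-\theta I$, first-order condition yielding maximality of a linear functional over $\mathcal R(w)$, Burton's characterization producing the increasing $f$, vanishing on the non-positivity set, decay estimates to confine the support, fiberwise Riesz for Steiner symmetry) is broadly the right one. But two steps contain genuine errors.

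The first is your treatment of nonemptiness, which you single out as the main obstacle and attack by concentration-compactness. This is both unnecessary and unavailable. Unnecessary: the maximization is over $\overline{\mathcal R(w)^W}$, which is convex and \emph{weakly compact} in $L^p(\Pi)$; since $I$ is weakly lower semicontinuous (as $x_2\geq 0$, truncate and use Fatou) and $E$ is weakly sequentially continuous along sequences of bounded impulse, $E-\theta I$ attains its supremum by the direct method --- no tightness is needed, precisely because the admissible class already contains all weak limits, spread-out or not. Unavailable: tightness of maximizing sequences in $\mathcal R(w)$ is in general \emph{false}. For large $\theta$ mass genuinely escapes horizontally toward $\partial\Pi$ (note $0\in\overline{\mathcal R(w)^W}$ as a weak limit of spreading translates), which is exactly why $\mathcal M_\theta\subset\mathcal R(w)$ appears as a hypothesis here and is established separately, and only for small $\theta$, in Lemma \ref{lemm1} ([B6, Lemma 10]). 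Your dichotomy-exclusion sketch uses no smallness of $\theta$, so if it worked it would produce a maximizer in $\mathcal R(w)$ for every $\theta$, contradicting the known picture; as it stands the "strict preference for concentration" claim cannot be made rigorous without the smallness that this lemma deliberately does not assume.

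The second is the decay estimate underlying bounded support. The claimed pointwise dipole bound $\mathcal G\zeta(\mathbf x)\leq Cx_2/|\mathbf x|^2$ for large $|\mathbf x|$ is circular and, near ${\rm supp}(\zeta)$, false: the $|\mathbf x|^{-2}$ rate presupposes the vorticity sits at bounded distance from the origin (the very thing being proved), and the kernel bound $\ln\bigl(1+4x_2y_2/|\mathbf x-\mathbf y|^2\bigr)\leq 4x_2y_2/|\mathbf x-\mathbf y|^2$ is useless near the singularity since $|\mathbf x-\mathbf y|^{-2}$ is not in $L^{p'}_{\rm loc}$ for $p'=p/(p-1)<2$. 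What is actually available is much weaker: the Steiner-symmetric estimate of Lemma \ref{lemm4} ([B10, Lemma 5]), $\mathcal Gv\leq K(I(v)+\|v\|_{L^1}+\|v\|_{L^p})\,x_2|x_1|^{-1/(2p)}$, which decays only in $|x_1|$ and \emph{requires the symmetry} --- so Steiner symmetry must be established before, not after, the support bound, reversing your order --- together with a separate logarithmic bound in $x_2$ (in the spirit of Lemma \ref{lem304}) to play against $\theta x_2$ vertically. Two smaller slips in the same vein: your first-order computation asserts all integrals are finite "because $v,\zeta$ have bounded support", but elements of $\mathcal R(w)$ only have supports of \emph{finite measure}, not bounded ones, and $I(v)$ can even be $+\infty$ on $\mathcal R(w)$; and vanishing a.e. on the exact level set $\{\mathcal G\zeta-\theta x_2=0\}$ (as opposed to $\{\mathcal G\zeta-\theta x_2<0\}$) needs the a.e. vanishing of $\Delta$ on level sets of Sobolev functions, exactly as invoked in the paper's Lemma \ref{ccc2}.
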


\begin{lemma}[\cite{B6}, Theorem 1]\label{lemm3}
Let  $w\in L_{\rm b}^p(\Pi)$ be nonnegative and $\theta$ be a positive number. Denote $\mathcal M_\theta $  the set of maximizers of $E-\theta I$ over $\overline{\mathcal R(w)^W}$.  If
$\varnothing\neq \mathcal M_\theta\subset \mathcal R(w),$ then $\mathcal M_\theta$ is orbitally stable in the following sense:   for any fixed positive number $A>|{\rm supp}(w)|$ and  any $\epsilon>0,$ there exists some $\delta>0,$ depending only on $w,\theta,\epsilon$ and $A$, such that for any $\omega_0\in L_{\rm b}^p(\Pi)$ with $|{\rm supp}(\omega_0)|< A$, it holds that
\[\inf_{v\in \mathcal M_\theta}|I(\omega_0)-I(v)|+\inf_{v\in \mathcal M_\theta}\|\omega_0-v\|_{L^2(\Pi)}<\delta\Longrightarrow \inf_{v\in \mathcal M_\theta}\|\omega_t-v\|_{L^2(\Pi)}<\epsilon,\,\,\forall \,t\ge 0,\]
whenever $\omega_t$ is an $L^p$-regular solution to the vorticity equation \eqref{vor}\eqref{bsl2} in $\Pi$ with initial vorticity $\omega_0.$

\end{lemma}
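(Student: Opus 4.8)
The plan is to prove orbital stability by the energy--impulse--Casimir (rearrangement) method, arguing by contradiction and exploiting that an $L^p$-regular solution of \eqref{vor}\eqref{bsl2} conserves three quantities: its distribution function (hence every $L^s$ norm and the measure of its support), the kinetic energy $E$, and the impulse $I$. Suppose the conclusion fails for some $\epsilon_0>0$ and some admissible $A>|\mathrm{supp}(w)|$. Then there are initial data $\omega_0^n\in L^p_{\rm b}(\Pi)$ with $|\mathrm{supp}(\omega_0^n)|<A$, and times $t_n\ge0$, such that
\[
\inf_{v\in\mathcal M_\theta}|I(\omega_0^n)-I(v)|+\inf_{v\in\mathcal M_\theta}\|\omega_0^n-v\|_{L^2(\Pi)}\longrightarrow0,
\]
while the corresponding solutions $\eta_n:=\omega_{t_n}^n$ obey $\inf_{v\in\mathcal M_\theta}\|\eta_n-v\|_{L^2(\Pi)}\ge\epsilon_0$. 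By the conservation laws, $\eta_n$ lies in the rearrangement class $\mathcal R(\omega_0^n)$, so $\|\eta_n\|_{L^2(\Pi)}=\|\omega_0^n\|_{L^2(\Pi)}$ and $|\mathrm{supp}(\eta_n)|=|\mathrm{supp}(\omega_0^n)|<A$, and moreover $E(\eta_n)=E(\omega_0^n)$ and $I(\eta_n)=I(\omega_0^n)$.

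First I would verify that $\{\eta_n\}$ is an \emph{asymptotically maximizing} sequence. Choosing $v_n\in\mathcal M_\theta$ realizing the $L^2$ infimum up to $o(1)$, the algebraic identity $E(\omega_0^n)-E(v_n)=\tfrac12\langle\omega_0^n+v_n,\mathcal G(\omega_0^n-v_n)\rangle$ together with the weak regularizing character of the logarithmic kernel of $\mathcal G$ gives $E(\omega_0^n)\to E(v_n)$; combining this with the separately assumed impulse closeness yields $(E-\theta I)(\omega_0^n)\to S$, where $S:=\sup_{\overline{\mathcal R(w)^W}}(E-\theta I)$ is the maximal value. By conservation, $(E-\theta I)(\eta_n)=(E-\theta I)(\omega_0^n)\to S$, so each $\eta_n$ nearly attains the supremum over its own (and, asymptotically, over $w$'s) rearrangement class.

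Next I would extract and identify a limit. The sequence $\{\eta_n\}$ is bounded in $L^2(\Pi)$, so after passing to a subsequence $\eta_n\rightharpoonup\eta_*$ weakly. Since only the \emph{measure}, not the diameter, of $\mathrm{supp}(\eta_n)$ is controlled, the core analytic task is to show that the near-maximizing property prevents the mass of $\eta_n$ from spreading out or splitting into distant pieces: because $E$ strictly rewards concentration, a spread-out or dichotomous configuration would have $(E-\theta I)$ bounded away below $S$. Ruling this out---after translating each $\eta_n$ in the $x_1$ direction, which leaves $E$, $I$, the rearrangement class and $\mathrm{dist}_{L^2}(\,\cdot\,,\mathcal M_\theta)$ invariant because $\mathcal M_\theta$ is invariant under horizontal translations---confines the mass to a common bounded set. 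On such a set $\mathcal G$ is compact on $L^2$, so $E$ is weakly continuous, and boundedness of the weight $x_2$ makes $I$ weakly continuous as well; hence $(E-\theta I)(\eta_*)=S$. Finally, the distribution function of $\eta_n$ equals that of $\omega_0^n$ and converges to that of $w$ (as $\omega_0^n\to v_n$ in $L^2$ with $v_n\in\mathcal R(w)$), so by the standard characterization of the weak closure of a rearrangement class one gets $\eta_*\in\overline{\mathcal R(w)^W}$, whence $\eta_*\in\mathcal M_\theta$.

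I would then upgrade weak to strong convergence: since $\|\eta_n\|_{L^2(\Pi)}=\|\omega_0^n\|_{L^2(\Pi)}\to\|w\|_{L^2(\Pi)}=\|\eta_*\|_{L^2(\Pi)}$ and $\eta_n\rightharpoonup\eta_*$, the Hilbert-space fact that weak convergence together with convergence of norms forces strong convergence gives $\eta_n\to\eta_*\in\mathcal M_\theta$ in $L^2(\Pi)$, contradicting $\inf_{v\in\mathcal M_\theta}\|\eta_n-v\|_{L^2(\Pi)}\ge\epsilon_0$. I expect the precompactness and identification of $\eta_*$ to be the main obstacle: the half-plane is unbounded, so one must simultaneously control the horizontal translation non-compactness, rule out escape of mass to spatial infinity and splitting, and confirm that weak limits of elements of the \emph{drifting} classes $\mathcal R(\omega_0^n)$ remain admissible in $\overline{\mathcal R(w)^W}$; establishing the weak continuity of $E$ and $I$ on the resulting confined sets is where the analytic estimates concentrate. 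The unbounded weight $x_2$ in $I$ is precisely why the impulse closeness must be imposed as a separate hypothesis, as it cannot be deduced from $L^2$ closeness alone.
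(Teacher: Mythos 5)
The paper contains no proof of this lemma: it is imported verbatim from \cite{B6} (Theorem 1), so the only benchmark is Burton--Lopes Filho--Nussenzveig Lopes's own argument. Your architecture --- contradiction, conservation of the rearrangement class together with $E$ and $I$, showing $\eta_n=\omega^n_{t_n}$ is asymptotically maximizing, compactness up to $x_1$-translations, then upgrading weak to strong $L^2$ convergence from $\|\eta_n\|_{L^2}\to\|\eta_*\|_{L^2}$ --- is exactly the strategy of the cited proof, so the road map is right. Two set-up remarks: your step $(E-\theta I)(\omega_0^n)\to S$ must be carried out with only uniform $L^1\cap L^2$ control, since the hypothesis bounds $|{\rm supp}(\omega_0^n)|$ by $A$ but not $\|\omega_0^n\|_{L^p(\Pi)}$; this works via the kernel identity $\frac{1}{4\pi}\ln\left(1+4x_2y_2/|\mathbf x-\mathbf y|^2\right)$ together with the separately assumed impulse closeness, but it is not the soft statement you make. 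Also, your use of $\eta_n\in\mathcal R(\omega_0^n)$ is legitimate only because the precise definition of an $L^p$-regular solution in \cite{B6,B10} includes transport by area-preserving maps; the abbreviated definition recalled in this paper (conservation of $E$ and $I$ alone) would not deliver $\|\eta_n\|_{L^2}=\|\omega_0^n\|_{L^2}$, on which your final contradiction rests.

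The genuine gap is the compactness step, which you flag but do not execute, and which is essentially the entire analytic content of the cited theorem. The claim that ``a spread-out or dichotomous configuration would have $(E-\theta I)$ bounded away below $S$'' does not follow from a soft ``$E$ rewards concentration'' principle: $E-\theta I$ is invariant under horizontal translations, so a dichotomizing sequence forfeits only the interaction energy between the separating pieces, and one must prove that this loss stays strictly positive relative to the constrained supremum; similarly, vertical escape of mass is excluded not by energetics alone but by playing a decay estimate of the type in Lemma \ref{lemm4} against a lower bound for $\mathcal G\eta_n$ on the vortex core (exactly the role the $\mu_\zeta$ estimates of Section 3 perform for the scaled problem). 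In \cite{B6} this is a standalone compactness theorem for almost-maximizing sequences relative to perturbed rearrangement classes, proved with Steiner symmetrization and strict rearrangement inequalities of Burchard--Guo type (cf.\ Lemma \ref{bgu}) to rule out vanishing and splitting and to force strong convergence. Your sketch also compresses the identification $\eta_*\in\overline{\mathcal R(w)^W}$: since $\eta_n$ lies in the drifting classes $\mathcal R(\omega_0^n)$ rather than $\mathcal R(w)$, one needs Burton's characterization of the weak closure of a rearrangement class (the inequalities $\int_\Pi(\eta_*-s)^+d\mathbf x\le\int_\Pi(w-s)^+d\mathbf x$ for all $s>0$, plus the mass bound) and a passage to the limit in those inequalities, which your appeal to ``the standard characterization'' leaves unjustified. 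Until these three items are supplied, the proposal is a correct outline of the known proof rather than a proof.
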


\begin{lemma}[\cite{B10}, Lemma 5]\label{lemm4}
Let $2<s<+\infty.$ Then there exists a positive number $K$, depending only on $s$, such that for any non-negative $v\in L^1(\Pi)\cap L^s(\Pi) $ that is Steiner-symmetric about the $x_2$ axis, it holds that
\[\mathcal Gv(\mathbf x)\leq K\left(I(v)+\|v\|_{L^1(\Pi)}+\|v\|_{L^s(\Pi)}\right)x_2|x_1|^{-1/(2s)}, \quad\forall \,\mathbf x=(x_1,x_2)\in\Pi,\,|x_1|\geq 1.\]
\end{lemma}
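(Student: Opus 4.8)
I would work directly from the explicit half-plane kernel and reduce everything to two elementary ingredients: an exact value for the $y_1$-integral of the kernel, and the pointwise estimate forced by Steiner symmetry. First I would rewrite $\mathcal G$: since $|\mathbf x-\bar{\mathbf y}|^2=(x_1-y_1)^2+(x_2+y_2)^2$ and $|\mathbf x-\mathbf y|^2=(x_1-y_1)^2+(x_2-y_2)^2$, one has
\[
\mathcal Gv(\mathbf x)=\frac{1}{4\pi}\int_\Pi \ln\!\left(1+\frac{4x_2y_2}{|\mathbf x-\mathbf y|^2}\right)v(\mathbf y)\,d\mathbf y .
\]
Two facts about the integrand drive the proof. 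The elementary inequality $\ln(1+t)\le t$ gives the ``far'' bound $\tfrac{1}{4\pi}\ln(1+4x_2y_2/|\mathbf x-\mathbf y|^2)\le x_2y_2/(\pi|\mathbf x-\mathbf y|^2)$. Separately, using $\int_{\mathbb R}\ln\frac{u^2+a^2}{u^2+b^2}\,du=2\pi(a-b)$ for $a\ge b\ge0$ (a one-line computation by differentiating in $a$), with $a=x_2+y_2$ and $b=|x_2-y_2|$, one obtains the exact identity
\[
\int_{\mathbb R}\frac{1}{4\pi}\ln\!\left(1+\frac{4x_2y_2}{(x_1-y_1)^2+(x_2-y_2)^2}\right)dy_1=\min\{x_2,y_2\}.
\]
The point of this identity is that it is finite, so it painlessly absorbs the logarithmic singularity of the kernel on the diagonal $\mathbf y=\mathbf x$.

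Next I would record the consequence of Steiner symmetry. For a.e.\ $y_2$ the slice $y_1\mapsto v(y_1,y_2)$ is even and nonincreasing in $|y_1|$, so for each $t>0$, $2t\,v(t,y_2)\le\int_{-t}^{t}v(r,y_2)\,dr\le V(y_2)$, where $V(y_2):=\int_{\mathbb R}v(r,y_2)\,dr$; hence $v(y_1,y_2)\le V(y_2)/(2|y_1|)$ a.e.\ in $\Pi$. Note $\int_0^\infty V=\|v\|_{L^1(\Pi)}$ and $\int_0^\infty y_2V(y_2)\,dy_2=I(v)$ (if $I(v)=+\infty$ the claim is trivial, so assume it finite). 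Then I would fix $\mathbf x$ with $x_1\ge1$ (the case $x_1\le-1$ is identical since $\mathcal Gv$ is even in $x_1$) and split $\Pi=\Pi_1\cup\Pi_2$ with $\Pi_1=\{|y_1|\le x_1/2\}$, $\Pi_2=\{|y_1|>x_1/2\}$. On $\Pi_1$ one has $|\mathbf x-\mathbf y|\ge|x_1-y_1|\ge x_1/2$, so the far bound gives
\[
\int_{\Pi_1}\!\ge0 \le \frac{x_2}{\pi}\int_{\Pi_1}\frac{y_2\,v(\mathbf y)}{|\mathbf x-\mathbf y|^2}\,d\mathbf y\le\frac{4x_2}{\pi x_1^2}\int_\Pi y_2\,v(\mathbf y)\,d\mathbf y=\frac{4x_2\,I(v)}{\pi x_1^2}.
\]

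On $\Pi_2$ I would instead use the pointwise bound: there $|y_1|>x_1/2$ forces $v(\mathbf y)\le V(y_2)/x_1$, whence, by nonnegativity, the kernel identity, and $\min\{x_2,y_2\}\le x_2$,
\[
\int_{\Pi_2}\!\le\frac{1}{x_1}\int_\Pi\frac{1}{4\pi}\ln\!\left(1+\frac{4x_2y_2}{|\mathbf x-\mathbf y|^2}\right)V(y_2)\,d\mathbf y=\frac{1}{x_1}\int_0^\infty\min\{x_2,y_2\}V(y_2)\,dy_2\le\frac{x_2\,\|v\|_{L^1(\Pi)}}{x_1}.
\]
Since $x_1\ge1$ and $0<1/(2s)<1$, both $x_1^{-2}$ and $x_1^{-1}$ are dominated by $x_1^{-1/(2s)}$, so summing the two contributions yields
\[
\mathcal Gv(\mathbf x)\le\Big(\tfrac{4}{\pi}I(v)+\|v\|_{L^1(\Pi)}\Big)x_2\,|x_1|^{-1/(2s)},
\]
which is the claimed estimate with a universal constant (and, notably, without the $\|v\|_{L^s}$ term, so the stated inequality follows a fortiori).

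\textbf{Main obstacle.} The only genuinely delicate point is the logarithmic singularity of the kernel along $\mathbf y=\mathbf x$, which sits inside $\Pi_2$ once $x_1\ge1$; there the crude bound $\ln(1+t)\le t$ is too lossy, producing a non-integrable $|\mathbf x-\mathbf y|^{-2}$. The device that removes the difficulty is integrating the \emph{exact} kernel in $y_1$ first: the identity $\int_{\mathbb R}(\cdots)\,dy_1=\min\{x_2,y_2\}$ shows the singularity is harmless after the $y_1$-integration, and because the Steiner bound on $\Pi_2$ depends only on $y_2$ (after using $|y_1|>x_1/2$) the two ingredients combine immediately. Should one instead keep $v$ and apply H\"older near the diagonal, the same singularity must be absorbed via the local membership of $\ln$ in every $L^{s'}$, and this is precisely where the $\|v\|_{L^s}$ term in the stated bound would enter.
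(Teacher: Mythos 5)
Your proof is correct, and it is worth noting at the outset that the paper itself offers no proof of this statement: Lemma \ref{lemm4} is imported verbatim from Burton \cite{B10} (Lemma 5 there), so the only possible comparison is with Burton's argument. His route bounds $v$ pointwise via Steiner symmetry and then controls the near-diagonal part of the Green's function by H\"older's inequality against the $L^s$ norm, which is precisely where the $\|v\|_{L^s(\Pi)}$ term and the weak power $|x_1|^{-1/(2s)}$ enter. You sidestep that entirely with the identity $\int_{\mathbb R}\ln\frac{u^2+a^2}{u^2+b^2}\,du=2\pi(a-b)$, which after the substitution $a=x_2+y_2$, $b=|x_2-y_2|$ integrates the \emph{exact} kernel in $y_1$ to the finite quantity $\min\{x_2,y_2\}$ and so absorbs the logarithmic singularity for free; combined with the Steiner bound $v(y_1,y_2)\le V(y_2)/(2|y_1|)$ on $\{|y_1|>x_1/2\}$ and the crude bound $\ln(1+t)\le t$ on $\{|y_1|\le x_1/2\}$, this yields the strictly stronger estimate $\mathcal Gv(\mathbf x)\le\bigl(\tfrac{4}{\pi}I(v)+\|v\|_{L^1(\Pi)}\bigr)x_2|x_1|^{-1}$, from which the stated inequality follows a fortiori since $x_1^{-1}\le x_1^{-1/(2s)}$ for $|x_1|\ge1$. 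I verified the key steps (the evenness of $\mathcal Gv$ in $x_1$, the Tonelli interchange giving $\int_0^\infty\min\{x_2,y_2\}V(y_2)\,dy_2\le x_2\|v\|_{L^1(\Pi)}$, and the reduction to finite $I(v)$) and see no gap; the only blemish is the garbled display ``$\int_{\Pi_1}\ge0\le\cdots$'', which is a typo rather than a mathematical error. What your approach buys is a cleaner, self-contained proof with a universal constant and a better decay rate; what Burton's buys is nothing extra here, though for the purposes of this paper only the stated weaker form is ever used (in Lemma \ref{lem3060}).
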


The following  Sobolev inequality will be used in Lemma \ref{lem302}. 
\begin{lemma}\label{soboemb}
There exists a generic positive constant $S$ such that 
\begin{equation}\label{soboemb1}
\|u\|_{L^2(\mathbb R^2)}\leq S\|\nabla \phi\|_{L^1(\mathbb R^2)},\quad\forall \,u\in W^{1,1}(\mathbb R^2).
\end{equation}
\end{lemma}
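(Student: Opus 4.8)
The plan is to prove the Sobolev-type inequality
\[
\|u\|_{L^2(\mathbb R^2)}\leq S\|\nabla u\|_{L^1(\mathbb R^2)},\qquad\forall\, u\in W^{1,1}(\mathbb R^2),
\]
which is precisely the endpoint Gagliardo--Nirenberg--Sobolev inequality $W^{1,1}(\mathbb R^2)\hookrightarrow L^{2}(\mathbb R^2)$ in dimension $n=2$ (note that in the stated display the norm on the right should read $\|\nabla u\|_{L^1}$, not $\|\nabla\phi\|_{L^1}$; I read the intended statement through $u$). The general Sobolev embedding $W^{1,1}(\mathbb R^n)\hookrightarrow L^{n/(n-1)}(\mathbb R^n)$ gives exactly the exponent $n/(n-1)=2$ when $n=2$, so this is the classical borderline case and the constant $S$ can be taken absolute (indeed the sharp constant here is $1/(2\sqrt\pi)$, but we only need existence of \emph{some} $S$).

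First I would reduce to smooth compactly supported functions: by density of $C_c^\infty(\mathbb R^2)$ in $W^{1,1}(\mathbb R^2)$, it suffices to establish the inequality for $u\in C_c^\infty(\mathbb R^2)$ and then pass to the limit, using that both sides are continuous with respect to the $W^{1,1}$ norm (the left side via the embedding itself applied along a Cauchy sequence). For smooth compactly supported $u$, I would run the standard Gagliardo slicing argument. The key pointwise bounds are
\[
|u(x_1,x_2)|\leq \tfrac12\int_{\mathbb R}|\partial_{x_1}u(t,x_2)|\,dt,\qquad
|u(x_1,x_2)|\leq \tfrac12\int_{\mathbb R}|\partial_{x_2}u(x_1,s)|\,ds,
\]
obtained by writing $u$ as the integral of its derivative from $-\infty$ and symmetrically from $+\infty$. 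In dimension two these combine cleanly: multiplying the two bounds gives
\[
|u(x_1,x_2)|^2\leq \tfrac14\left(\int_{\mathbb R}|\partial_{x_1}u(t,x_2)|\,dt\right)\left(\int_{\mathbb R}|\partial_{x_2}u(x_1,s)|\,ds\right),
\]
and integrating in $x_1$ and then $x_2$, using Tonelli to separate variables, yields
\[
\int_{\mathbb R^2}|u|^2\,d\mathbf x\leq \tfrac14\left(\int_{\mathbb R^2}|\partial_{x_1}u|\,d\mathbf x\right)\left(\int_{\mathbb R^2}|\partial_{x_2}u|\,d\mathbf x\right)\leq \tfrac14\|\nabla u\|_{L^1(\mathbb R^2)}^2,
\]
where the last step uses $|\partial_{x_i}u|\leq|\nabla u|$. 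Taking square roots gives the claim with $S=1/2$.

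The proof is essentially routine, so there is no genuine obstacle; the only points requiring a little care are the density reduction (ensuring the compactly supported approximants converge in $W^{1,1}$ and that the inequality survives the limit) and the clean factorization in $n=2$, which is special to two dimensions because the product of the two one-dimensional estimates already produces the square power of $|u|$ without needing Hölder's inequality across several slicing directions (as one would in higher dimensions). Since the lemma only asserts the existence of a generic constant $S$, I need not optimize it, and the elementary value $S=\tfrac12$ suffices for all later applications.
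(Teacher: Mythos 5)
Your proof is correct: the Gagliardo slicing argument with the two one-dimensional pointwise bounds, Tonelli, and a density reduction is the standard proof of the borderline embedding $W^{1,1}(\mathbb R^2)\hookrightarrow L^2(\mathbb R^2)$, and you rightly note that the $\phi$ in the paper's display is a typo for $u$. The paper gives no proof of its own and simply cites Evans--Gariepy, where essentially this same argument appears, so there is nothing to compare beyond that.
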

The proof of Lemma \ref{soboemb} can be found in many textbooks. See \cite{EV},  p. 162 for example.

For any Lebesgue measurable function $u$, we use $u^*$ to denote its symmetric-decreasing rearrangement with respect to the origin. See \cite{LL}, \S 3.3 for the precise definition.   We will need the following two rearrangement inequalities.
\begin{lemma}[\cite{LL}, \S 3.4]\label{rri1}
Let $u,v$ be nonnegative Lebesgue measurable functions on $\mathbb R^2$. Then
\[\int_{\mathbb R^2}uvdx\leq \int_{\mathbb R^2}u^*v^*dx.\]
\end{lemma}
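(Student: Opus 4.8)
The plan is to prove the inequality by the layer-cake (bathtub) representation, reducing the product integral to a comparison of the measures of intersections of superlevel sets, which becomes transparent once one uses the defining property of the symmetric-decreasing rearrangement. First I would record the layer-cake formula: for any nonnegative measurable $u$ on $\mathbb R^2$,
\[u(\mathbf x)=\int_0^\infty \chi_{\{u>s\}}(\mathbf x)\,ds,\]
and likewise for $v$. Multiplying the two representations and invoking Tonelli's theorem (legitimate since every integrand is nonnegative) gives
\[\int_{\mathbb R^2}u(\mathbf x)v(\mathbf x)\,d\mathbf x=\int_0^\infty\!\!\int_0^\infty \big|\{u>s\}\cap\{v>t\}\big|\,ds\,dt.\]
The whole problem is thereby recast as comparing, for each fixed pair $(s,t)$, the measure of $\{u>s\}\cap\{v>t\}$ with the corresponding quantity built from $u^*$ and $v^*$.

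Next I would invoke the defining property of the symmetric-decreasing rearrangement, namely that its superlevel sets are exactly the symmetric rearrangements of the superlevel sets of the original function, i.e. $\{u^*>s\}=\{u>s\}^*$, which is a disk centered at the origin with the same measure $|\{u>s\}|$ (empty when $\{u>s\}$ is null, and all of $\mathbb R^2$ when $\{u>s\}$ has infinite measure). Since any two disks centered at the origin are nested, for the rearranged functions the intersection measure is exactly the smaller of the two, so that
\[\big|\{u^*>s\}\cap\{v^*>t\}\big|=\min\big\{|\{u>s\}|,\,|\{v>t\}|\big\}.\]
On the other hand, for the original sets the trivial bound $|A\cap B|\le\min\{|A|,|B|\}$ yields
\[\big|\{u>s\}\cap\{v>t\}\big|\le\min\big\{|\{u>s\}|,\,|\{v>t\}|\big\}=\big|\{u^*>s\}\cap\{v^*>t\}\big|.\]
Integrating this pointwise-in-$(s,t)$ inequality over $(0,\infty)^2$ and applying the layer-cake identity once more, now to the product $u^*v^*$, produces $\int_{\mathbb R^2}uv\,d\mathbf x\le\int_{\mathbb R^2}u^*v^*\,d\mathbf x$, which is the claim.

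There is no genuine analytic obstacle here; the result is classical and the argument is short, so the points that require care are purely bookkeeping rather than difficulty. The step that must be stated precisely is the identity $\{u^*>s\}=\{u>s\}^*$ together with the monotone nesting of concentric disks, since this is exactly where the rearrangement enters and where the comparison inequality becomes an equality for the rearranged pair. One should also fix the degenerate cases (superlevel sets of zero or infinite measure) by the conventions above, so that the $\min$-formula and Tonelli's theorem apply without exception: if $\int_{\mathbb R^2}u^*v^*\,d\mathbf x=+\infty$ the asserted inequality is vacuous, and otherwise every quantity appearing is finite and the chain of equalities and inequalities is valid as written.
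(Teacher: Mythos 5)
Your proof is correct; the paper states this lemma without proof, citing Lieb--Loss \S 3.4, and your layer-cake argument via $\{u^*>s\}=\{u>s\}^*$, the nesting of concentric balls, and the bound $|A\cap B|\le\min\{|A|,|B|\}$ is precisely the classical proof given in that reference. The degenerate cases (null or infinite-measure superlevel sets, infinite right-hand side) are handled correctly by your stated conventions, so nothing is missing.
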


\begin{lemma}[\cite{LL}, \S 3.7]\label{rri2}
Let $u,v,w$ be nonnegative Lebesgue measurable functions on $\mathbb R^2$. Then
\[\int_{\mathbb R^2}\int_{\mathbb R^2}u(x)v(x-y)w(y)dxdy\leq \int_{\mathbb R^2}\int_{\mathbb R^2}u^*(x)v^*(x-y)w^*(y)dxdy.\]
\end{lemma}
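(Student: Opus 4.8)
The statement is the classical Riesz rearrangement inequality, so the plan is to reprove it from scratch in $\mathbb R^2$ by the method of Steiner symmetrization. Write
\[J(u,v,w)=\int_{\mathbb R^2}\int_{\mathbb R^2}u(x)v(x-y)w(y)\,dx\,dy.\]
First I would reduce to characteristic functions of sets. Using the layer-cake representation $u=\int_0^\infty\chi_{\{u>s\}}\,ds$ together with the identity $(\chi_{\{u>s\}})^*=\chi_{\{u^*>s\}}$, and Tonelli's theorem to interchange the order of integration, the trilinear functional $J(u,v,w)$ expands as a triple integral over the levels $(s,t,r)$ of $J(\chi_A,\chi_B,\chi_C)$ with $A=\{u>s\}$, $B=\{v>t\}$, $C=\{w>r\}$, and likewise $J(u^*,v^*,w^*)$ expands over $J(\chi_{A^*},\chi_{B^*},\chi_{C^*})$. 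Hence it suffices to prove
\[J(\chi_A,\chi_B,\chi_C)\le J(\chi_{A^*},\chi_{B^*},\chi_{C^*})\]
for measurable sets $A,B,C$ of finite measure, where $A^*$ denotes the centered open disk with $|A^*|=|A|$. (The case of general nonnegative functions whose super-level sets may have infinite measure then follows by monotone approximation.)

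Next I would show that a single Steiner symmetrization does not decrease the functional. Fix a unit vector $e$ and let $S_e$ denote Steiner symmetrization about the line $e^\perp$, so that each slice of $A$ in the direction $e$ is replaced by the centered interval of the same length. Writing $x=(x_1,x_2)$, $y=(y_1,y_2)$ with $e=e_1$, Fubini's theorem foliates the domain over the transverse variables $(x_2,y_2)$, and on each leaf the inner integral is exactly the one-dimensional Riesz functional of the slices $A_{x_2}$, $B_{x_2-y_2}$, $C_{y_2}\subset\mathbb R$. Applying the one-dimensional inequality slicewise and noting that the centered interval of a slice is precisely the corresponding slice of the Steiner-symmetrized set, one obtains
\[J(\chi_A,\chi_B,\chi_C)\le J(\chi_{S_eA},\chi_{S_eB},\chi_{S_eC}),\qquad |S_eA|=|A|.\]
The one-dimensional Riesz inequality used here is the base case: for intervals it follows from the Hardy--Littlewood inequality, since the overlap $y\mapsto|A\cap(y+B)|$ of two intervals is a translate of a symmetric, unimodal function, against which $\chi_C$ integrates maximally when $C$ is centered; the general one-dimensional case is then obtained by a continuous-symmetrization (or approximation) argument.

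Finally I would iterate. There is a sequence of directions $e_1,e_2,\dots$ such that the successive Steiner symmetrizations $S_{e_k}\cdots S_{e_1}A$ converge in measure to $A^*$, and simultaneously for $B$ and $C$; this is the classical fact that iterated Steiner symmetrizations of a set of finite measure converge to the centered ball. Since $J$ is bounded (by Young's inequality one has $J(\chi_A,\chi_B,\chi_C)\le\min\{|A|\,|B|,\,|B|\,|C|,\,|A|\,|C|\}$) and continuous with respect to $L^1$-convergence of the indicators on sets of uniformly bounded measure, the monotonicity from the previous step applied along this sequence, followed by passage to the limit, yields the desired inequality for sets, and hence the lemma.

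The main obstacle is the Steiner-monotonicity step, and within it the one-dimensional Riesz inequality for general (non-interval) sets: the naive hope that a two-point (polarization) rearrangement handles every orbit separately fails, because the factor $v(x-y)$ couples the two integration variables and a single reflection can misalign the supports of the three factors, so the base case genuinely requires the slicewise one-dimensional result together with a continuous-symmetrization argument. A secondary technical point is the convergence of iterated Steiner symmetrizations to the disk in a topology strong enough to pass $J$ to the limit, which I would handle by truncating $A,B,C$ to large disks, using the uniform Young bound, and removing the truncation at the end by monotone convergence.
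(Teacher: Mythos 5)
This lemma is not proved in the paper: it is quoted directly from Lieb--Loss \S 3.7 (the Riesz rearrangement inequality), and your sketch is essentially the argument given there --- layer-cake reduction to characteristic functions, monotonicity under Steiner symmetrization obtained by applying the one-dimensional Riesz inequality slicewise, and convergence of iterated Steiner symmetrizations to the centered disk. The outline is correct, and the only points where substantive work remains are exactly the two you flag yourself, namely the one-dimensional inequality for general measurable sets (not just intervals) and the simultaneous convergence of the iterated symmetrizations of $A$, $B$, $C$ along a single universal sequence of directions, both of which are supplied by the cited reference.
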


The following lemma is a  direct consequence of Lemma 3.2 in \cite{BGu}.
\begin{lemma}[\cite{BGu}, Lemma 3.2]\label{bgu}
Let $\{u_n\}_{n=1}^{+\infty}\subset L^2(\mathbb R^2)$ such that for each $n$
\begin{equation}\label{lily}
 u_n(\mathbf x)\geq 0  \mbox{ a.e. }\mathbf x\in\mathbb R^2,\quad\int_{\mathbb R^2}\mathbf x u_n(\mathbf x)d\mathbf x=\mathbf 0,\quad {\rm supp}(u_n)\subset B_{\alpha}(\mathbf 0) 
 \end{equation}
for some $\alpha>0$.
If $u_n\rightharpoonup u$ and $u^*_n\rightharpoonup v$ for some $u,v\in L^2(\mathbb R^2),$ then 
\[\int_{\mathbb R^2}\int_{\mathbb R^2}\ln\frac{1}{|\mathbf x-\mathbf y|}u(\mathbf x)u(\mathbf y)d\mathbf xd \mathbf y\leq \int_{\mathbb R^2}\int_{\mathbb R^2}\ln\frac{1}{|\mathbf x-\mathbf y|}v(\mathbf x)v(\mathbf y)d\mathbf xd \mathbf y.\]
Moreover, the equality holds if and only $u=v$.
\end{lemma}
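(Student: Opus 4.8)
The plan is to first reduce to a nonnegative, symmetric-decreasing kernel so that the Riesz rearrangement inequality (Lemma \ref{rri2}) can be applied at each finite $n$, then pass to the weak limit using compactness of the logarithmic potential operator, and finally treat the equality case through a chain of two rearrangement comparisons. Throughout, write $J(w):=\int_{\mathbb R^2}\int_{\mathbb R^2}\ln\frac{1}{|\mathbf x-\mathbf y|}w(\mathbf x)w(\mathbf y)\,d\mathbf x\,d\mathbf y$, and note that every function in play is supported in $\overline{B_\alpha(\mathbf 0)}$: each $u_n^*$ is supported in a ball of radius $\le\alpha$ because rearrangement preserves the measure of the support, and the weak limits $u,v$ inherit support in $\overline{B_\alpha(\mathbf 0)}$ (test against $L^2$ functions vanishing on $\overline{B_\alpha}$). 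Hence $|\mathbf x-\mathbf y|\le 2\alpha$ on all supports.

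For the inequality, replace the kernel by $g(\mathbf z):=\left(\ln\frac{2\alpha}{|\mathbf z|}\right)^+$, which is nonnegative and symmetric-decreasing with $g=g^*$. Applying Lemma \ref{rri2} with the two outer functions equal to $u_n$ and middle function $g$ gives $\int\int g(\mathbf x-\mathbf y)u_n(\mathbf x)u_n(\mathbf y)\le\int\int g(\mathbf x-\mathbf y)u_n^*(\mathbf x)u_n^*(\mathbf y)$; since $g(\mathbf x-\mathbf y)=\ln\frac{2\alpha}{|\mathbf x-\mathbf y|}$ on the supports and $\int u_n=\int u_n^*$, this is exactly $J(u_n)\le J(u_n^*)$. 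To pass to the limit I would use that the integral operator with kernel $\ln\frac{1}{|\mathbf x-\mathbf y|}$ on $B_\alpha(\mathbf 0)$ is Hilbert--Schmidt, hence compact on $L^2(B_\alpha)$; therefore $u_n\rightharpoonup u$ forces the corresponding potentials to converge strongly, and pairing weak convergence against strong convergence yields $J(u_n)\to J(u)$ and $J(u_n^*)\to J(v)$. This gives $J(u)\le J(v)$.

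The \textbf{equality case is the main obstacle}, and I would handle it by a two-step comparison in which $u^*$ denotes the symmetric-decreasing rearrangement of the weak limit $u$. First, $v$ is itself symmetric-decreasing, being a weak limit of the symmetric-decreasing functions $u_n^*$ (the cone of symmetric-decreasing $L^2$ functions is convex and strongly closed, hence weakly closed). Second, a lower-semicontinuity argument based on the bathtub principle shows $\int_{B_r(\mathbf 0)}v\ge\int_{B_r(\mathbf 0)}u^*$ for every $r>0$: since $\int_{B_r}u_n^*=\sup_{|E|\le\pi r^2}\int_E u_n\ge\int_{E_0}u_n\to\int_{E_0}u$ for any fixed admissible $E_0$, optimizing over $E_0$ gives $\liminf_n\int_{B_r}u_n^*\ge\int_{B_r}u^*$, while the left side converges to $\int_{B_r}v$; both carry the same total mass $m=\int u$ (test against $\chi_{B_\alpha}$). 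The crux is that for symmetric-decreasing profiles this nested-ball domination is monotone for the logarithmic energy: writing the energy through the radial mass distribution $V(s)=\int_{B_s}v$ and using that the spherical average of $\ln\frac{1}{|\mathbf x-\mathbf y|}$ over $|\mathbf y|=t$ at $|\mathbf x|=s$ equals $\ln\frac{1}{\max(s,t)}$, one obtains $J(v)=\int_0^\infty\int_0^\infty\ln\frac{1}{\max(s,t)}\,dV(s)\,dV(t)$; an integration by parts then turns $J(v)-J(u^*)$ into $\int_0^\alpha (V(s)-W(s))\frac{V(s)+W(s)}{s}\,ds\ge 0$, where $W(s)=\int_{B_s}u^*$, and this is \emph{strictly} positive unless $V\equiv W$, i.e. unless $v=u^*$.

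Combining the pieces, the Riesz inequality for the single function $u$ (again Lemma \ref{rri2} via the same bounded-support trick) gives $J(u)\le J(u^*)$, so the assumed equality forces the chain $J(u)\le J(u^*)\le J(v)=J(u)$ to collapse to equalities. From $J(u^*)=J(v)$ and the strict monotonicity above I conclude $v=u^*$; from $J(u)=J(u^*)$ and the strict case of the Riesz inequality (Lieb's characterization, applicable since $-\ln|\mathbf z|$ is strictly symmetric-decreasing) I conclude that $u$ is a translate of $u^*$, and the centering condition $\int_{\mathbb R^2}\mathbf x\,u(\mathbf x)\,d\mathbf x=\mathbf 0$ (inherited in the limit from $\int\mathbf x u_n=\mathbf 0$ by testing against $x_i\chi_{B_\alpha}\in L^2$) forces the translation to vanish, so $u=u^*$. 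Hence $u=u^*=v$, which is the desired equality characterization. The points I expect to require the most care are the radial-energy monotonicity together with its strictness, and the correct joint use of the strict Riesz inequality and the first-moment constraint to exclude a nonzero translation.
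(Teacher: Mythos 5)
The paper does not actually prove this lemma: it is imported verbatim as Lemma 3.2 of Burchard--Guo \cite{BGu}, so there is no internal proof to compare against. What you have written is a self-contained argument, and it is essentially sound; in fact it follows the same circle of ideas as the cited source (truncate the logarithm to a nonnegative symmetric-decreasing kernel using the common support bound, apply Riesz at each $n$, pass to the limit via compactness of the Hilbert--Schmidt log-potential operator, and in the equality case compare $v$ with $u^*$ through the bathtub-principle domination $\int_{B_r}v\ge\int_{B_r}u^*$ and the radial energy identity $J(v)-J(u^*)=\int_0^\alpha (V-W)(V+W)\,s^{-1}\,ds$). Your computation of that identity, including the vanishing boundary term at $0$ from $V(t)\le \|v\|_{L^2}\sqrt{\pi}\,t$, and your use of the first-moment constraint to kill the residual translation, are correct.

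The one step you should not wave through is the invocation of ``Lieb's characterization'' for the equality $J(u)=J(u^*)$. Lieb's equality theorem is stated for a nonnegative, \emph{strictly} symmetric-decreasing middle function, whereas the kernel you are actually allowed to feed into Lemma \ref{rri2} is the truncation $\bigl(\ln\tfrac{R}{|\mathbf z|}\bigr)^+$, which is constant (zero) for $|\mathbf z|\ge R$ and hence not strictly decreasing. You need either Burchard's general equality theorem for the Riesz inequality, or the standard remark that since all supports lie in $\overline{B_\alpha(\mathbf 0)}$ the flat part of the kernel is never sampled once $R>2\alpha$, so the strict-kernel conclusion (that $u$ is a translate of $u^*$) still applies. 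With that point made explicit, your proof is complete and, unlike the paper, does not outsource the statement to \cite{BGu}.
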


\section{Proofs}

In this section, we give the proofs of Theorem \ref{thm1} and Corollary \ref{coro1}.

First we show by a simple calculation that maximizing $E-qI$ over $\mathcal R(\varrho^\varepsilon)$ is equivalent to  maximizing $E-{\varepsilon}qI$ over $\mathcal R(\varrho)$.
\begin{lemma}\label{lem31}
For any $\varepsilon>0,$ it holds that
\[S_{\varepsilon q}=T_\varepsilon,\quad\Sigma_\varepsilon=\{ v\in\mathcal R(\varrho)\mid v(\mathbf x)=\varepsilon^2\zeta(\varepsilon\mathbf x)\mbox{ for some }\zeta\in \Xi_\varepsilon\}.\]
\end{lemma}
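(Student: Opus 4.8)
The plan is to exhibit an explicit scaling map that conjugates the two variational problems. For $\varepsilon>0$ define the linear map $L_\varepsilon$ on $L^p(\Pi)$ by $(L_\varepsilon\zeta)(\mathbf x)=\varepsilon^2\zeta(\varepsilon\mathbf x)$, so that its inverse is $(L_\varepsilon^{-1}v)(\mathbf x)=\varepsilon^{-2}v(\mathbf x/\varepsilon)$, i.e.\ precisely the scaling \eqref{sc9} applied to $v$. First I would check that $L_\varepsilon$ maps $\mathcal R(\varrho^\varepsilon)$ bijectively onto $\mathcal R(\varrho)$: a layer-cake computation shows that for $v=L_\varepsilon\zeta$ one has $|\{v>s\}|=\varepsilon^{-2}|\{\zeta>s/\varepsilon^2\}|$, and combined with the definition \eqref{sc9} of $\varrho^\varepsilon$ (which gives $|\{\varrho^\varepsilon>t\}|=\varepsilon^2|\{\varrho>\varepsilon^2 t\}|$) this yields $|\{v>s\}|=|\{\varrho>s\}|$ for all $s$ exactly when $\zeta\in\mathcal R(\varrho^\varepsilon)$. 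Since $L_\varepsilon$ is a bounded linear bijection of $L^p(\Pi)$, it is a homeomorphism for the weak topology, and therefore carries $\overline{\mathcal R(\varrho^\varepsilon)^W}$ onto $\overline{\mathcal R(\varrho)^W}$.

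Next I would record how the two functionals transform under $L_\varepsilon$. For the impulse a direct change of variables $\mathbf y=\varepsilon\mathbf x$ gives $I(L_\varepsilon\zeta)=\varepsilon^{-1}I(\zeta)$. For the energy the crucial point is the scale invariance of the logarithmic kernel: the $\ln\varepsilon$ terms cancel in the ratio $\ln(|\mathbf x-\bar{\mathbf y}|/|\mathbf x-\mathbf y|)$, so the same substitution yields $\mathcal G(L_\varepsilon\zeta)(\mathbf x)=\mathcal G\zeta(\varepsilon\mathbf x)$ and hence $E(L_\varepsilon\zeta)=E(\zeta)$. Combining these two identities with the choice $\lambda=\varepsilon q$ gives the single clean relation
\[(E-\varepsilon q I)(L_\varepsilon\zeta)=E(\zeta)-\varepsilon q\cdot\varepsilon^{-1}I(\zeta)=(E-qI)(\zeta),\]
valid for every $\zeta\in L^p(\Pi)$ (not merely for rearrangements), and in particular on all of $\overline{\mathcal R(\varrho^\varepsilon)^W}$.

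Finally I would take suprema through the bijection. Because $L_\varepsilon$ maps $\overline{\mathcal R(\varrho^\varepsilon)^W}$ onto $\overline{\mathcal R(\varrho)^W}$ and the displayed identity turns $E-qI$ into $E-\varepsilon q I$, the supremum defining $T_\varepsilon$ equals the supremum defining $S_{\varepsilon q}$, that is $S_{\varepsilon q}=T_\varepsilon$. The same identity shows that $\zeta$ attains $T_\varepsilon$ if and only if $L_\varepsilon\zeta$ attains $S_{\varepsilon q}$, whence $\Sigma_{\varepsilon q}=L_\varepsilon(\Xi_\varepsilon)=\{v\mid v(\mathbf x)=\varepsilon^2\zeta(\varepsilon\mathbf x)\text{ for some }\zeta\in\Xi_\varepsilon\}$; since $L_\varepsilon$ restricts to a bijection between the rearrangement classes, the membership $v\in\mathcal R(\varrho)$ appearing in the statement corresponds exactly to $\zeta\in\mathcal R(\varrho^\varepsilon)$. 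I do not expect any genuine obstacle here, as the lemma is essentially a bookkeeping exercise; the only two points deserving care are the verification that $L_\varepsilon$ respects the weak closures (which follows from its weak bicontinuity on the norm-bounded rearrangement classes) and the scale invariance of $E$, which is special to the two-dimensional logarithmic kernel and fails in higher dimensions.
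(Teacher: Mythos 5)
Your proof is correct and follows essentially the same route as the paper's: the change of variables $v(\mathbf x)=\varepsilon^{-2}w(\mathbf x/\varepsilon)$ showing $(E-qI)(v)=(E-\varepsilon qI)(w)$ together with the observation that this scaling maps $\mathcal R(\varrho^\varepsilon)$ onto $\mathcal R(\varrho)$. You are in fact slightly more careful than the paper, which omits the (routine but worth stating) verification that the scaling is a weak homeomorphism and hence carries $\overline{\mathcal R(\varrho^\varepsilon)^W}$ onto $\overline{\mathcal R(\varrho)^W}$; note also that your writing $\Sigma_{\varepsilon q}$ rather than the paper's $\Sigma_{\varepsilon}$ is the form actually established by the computation.
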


\begin{proof}
For any $v\in \mathcal R(\varrho^\varepsilon),$ define $w(x)={\varepsilon^2}v({\varepsilon}\mathbf x).$ It is clear that $w\in\mathcal R(\varrho).$
By a direct calculation, we have
\begin{align*}
(E-qI)(v)&=\frac{1}{4\pi}\int_\Pi\int_\Pi\ln\frac{|\mathbf x-\bar{\mathbf y}|}{|\mathbf x-\mathbf y|}v(\mathbf x)v(\mathbf y)d\mathbf xd\mathbf y-q\int_\Pi x_2v(\mathbf x)d\mathbf x\\
&=\frac{1}{4\pi}\int_\Pi\int_\Pi\ln\frac{|\mathbf x-\bar{\mathbf y}|}{|\mathbf x-\mathbf y|}w(\mathbf x)w(\mathbf y)d\mathbf xd\mathbf y-\varepsilon q\int_\Pi x_2w(\mathbf x)d\mathbf x\\
&=(E-\varepsilon qI)(w).
\end{align*}
Hence the desired result follows.
\end{proof}

As a consequence of Lemma \ref{lem31}, $\Xi_\varepsilon$ is not empty for any $\varepsilon\in(0,\varepsilon_0)$.
Below   we assume that  $\varepsilon_0$ is sufficiently small as needed and $\varepsilon\in(0,\varepsilon_0)$.

By Lemma \ref{lemm2}, for any $\zeta\in\Xi_\varepsilon,$ there exists some increasing function $f_\zeta$, depending on $\zeta,$ such that $\zeta=f_\zeta(\mathcal G\zeta-qx_2)$  a.e. in $\Pi$.
 Define the Lagrangian multiplier
 \begin{equation}\label{lmp}
 \mu_\zeta=\inf\{s\in\mathbb R\mid f_\zeta(s)>0\}.
 \end{equation}
 Since $\zeta$ vanishes a.e. in $\{\mathbf x\in\Pi\mid \mathcal G\zeta-qx_2\leq 0\},$ we deduce  that $f_\zeta\equiv 0$ on $(-\infty,0]$, which implies $\mu_\zeta\geq 0.$
 
 For any $\zeta\in\Xi_\varepsilon,$ denote by $V_\zeta$ the vortex core related to $\zeta,$ that is, 
\begin{equation}
V_\zeta=\{\mathbf x\in\Pi \mid \zeta(\mathbf x)>0\}.
\end{equation} 

 \begin{lemma}\label{ccc2}
It holds that
 \begin{equation}\label{lesss}
V_\zeta =\{\mathbf x\in\Pi\mid \mathcal G\zeta(\mathbf x)-qx_2>\mu_\zeta\},\quad \forall\,\zeta\in\Xi_\varepsilon.
\end{equation}
 \end{lemma}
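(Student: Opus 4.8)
The plan is to prove the set equality $V_\zeta = \{\mathbf x\in\Pi\mid \mathcal G\zeta(\mathbf x)-qx_2>\mu_\zeta\}$ by a two-way inclusion, using only the structural facts already secured in Lemma \ref{lemm2}: namely that $\zeta = f_\zeta(\mathcal G\zeta - qx_2)$ a.e., with $f_\zeta$ increasing and $f_\zeta \equiv 0$ on $(-\infty,0]$, together with the definition \eqref{lmp} of $\mu_\zeta$ as the infimum of the set where $f_\zeta$ is strictly positive. The whole argument is essentially a point-by-point unpacking of what it means for an increasing function to be positive, combined with the precise meaning of $\mu_\zeta$.

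\medskip

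\noindent\textbf{Forward inclusion.} First I would show $V_\zeta \subset \{\mathcal G\zeta - qx_2 > \mu_\zeta\}$. Fix a point (up to a null set) where $\zeta(\mathbf x) > 0$. Since $\zeta = f_\zeta(\mathcal G\zeta - qx_2)$, this forces $f_\zeta(\mathcal G\zeta(\mathbf x) - qx_2) > 0$. Writing $s = \mathcal G\zeta(\mathbf x) - qx_2$, the condition $f_\zeta(s) > 0$ places $s$ in the set $\{t : f_\zeta(t) > 0\}$, whose infimum is $\mu_\zeta$ by \eqref{lmp}; hence $s \geq \mu_\zeta$. To upgrade this to the strict inequality $s > \mu_\zeta$, I would invoke monotonicity of $f_\zeta$: if $s = \mu_\zeta$ then by the infimum characterization every $t < \mu_\zeta$ gives $f_\zeta(t) = 0$, but in fact I want to argue that the value $\mu_\zeta$ itself cannot carry positive mass in a way that contradicts the strict inequality. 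The cleaner route is to show that $\{\mathcal G\zeta - qx_2 = \mu_\zeta\}$ contributes a Lebesgue-null set, or alternatively that $f_\zeta(\mu_\zeta) = 0$ by right/left-continuity conventions, so that $f_\zeta(s) > 0$ genuinely requires $s > \mu_\zeta$.

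\medskip

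\noindent\textbf{Reverse inclusion.} Conversely, suppose $\mathcal G\zeta(\mathbf x) - qx_2 > \mu_\zeta$ at a.e.\ such point. By the definition of $\mu_\zeta$ as the infimum of $\{t : f_\zeta(t) > 0\}$, there exists some $t_0$ with $\mu_\zeta \leq t_0 < \mathcal G\zeta(\mathbf x)-qx_2$ and $f_\zeta(t_0) > 0$. Since $f_\zeta$ is increasing, $f_\zeta(\mathcal G\zeta(\mathbf x)-qx_2) \geq f_\zeta(t_0) > 0$, and therefore $\zeta(\mathbf x) = f_\zeta(\mathcal G\zeta(\mathbf x)-qx_2) > 0$, placing $\mathbf x$ in $V_\zeta$. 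This direction is the more routine of the two, being a direct consequence of monotonicity and the infimum definition.

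\medskip

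\noindent\textbf{Main obstacle.} The delicate step is the treatment of the boundary level set $\{\mathcal G\zeta - qx_2 = \mu_\zeta\}$, where $f_\zeta$ could jump and the distinction between $\geq \mu_\zeta$ and $> \mu_\zeta$ is at stake. The resolution I anticipate is a regularity observation: because $\mathcal G\zeta$ solves an elliptic problem and $\zeta \in L^p$, the function $\mathcal G\zeta - qx_2$ is in $W^{2,p}_{\mathrm{loc}}$, so its level set at any fixed value $\mu_\zeta$ has measure zero unless $\nabla(\mathcal G\zeta - qx_2)$ vanishes on a set of positive measure there; on that level set $-\Delta \mathcal G\zeta = \zeta$ forces $\zeta = f_\zeta(\mu_\zeta)$, and one checks this is consistent only with $\zeta = 0$ a.e.\ on the level set. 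Thus the level set either has measure zero or lies in the complement of $V_\zeta$, and in either case the two sides of \eqref{lesss} agree up to a null set. Establishing this null-set claim rigorously—rather than merely invoking monotonicity—is where the real care is needed, and I would expect the authors to handle it via exactly such an elliptic regularity argument on the coincidence set.
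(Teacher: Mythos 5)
Your proposal is correct and follows essentially the same route as the paper: the strict-inequality sets are handled via the definition of $\mu_\zeta$ together with the monotonicity of $f_\zeta$ (and $\zeta\geq 0$), and the critical level set $\{\mathcal G\zeta-qx_2=\mu_\zeta\}$ is disposed of exactly as you anticipate, by the fact that the derivatives of a Sobolev function vanish a.e.\ on its level sets, so that $\zeta=-\Delta(\mathcal G\zeta-qx_2)=0$ a.e.\ there. The only cosmetic difference is that the paper states this last step directly rather than via your slightly roundabout ``$\zeta=f_\zeta(\mu_\zeta)$ is consistent only with $0$'' phrasing, but the substance is identical.
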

\begin{proof}
Fix $\zeta\in\Xi_\varepsilon$.  By the definition of $\mu_\zeta$, we have 
 \[\zeta>0 \mbox{ a.e. in } \{\mathbf x\in\Pi\mid \mathcal G\zeta(\mathbf x)-qx_2>\mu_\zeta\}\]
 and
 \[\zeta=0 \mbox{ a.e. in } \{\mathbf x\in\Pi\mid \mathcal G\zeta(\mathbf x)-qx_2<\mu_\zeta\}.\]
On the set $\{\mathbf x\in\Pi\mid \mathcal G\zeta(\mathbf x)-qx_2=\mu_\zeta\}$, taking into account  the property that all the derivatives of a Sobolev function  vanishes  on its level  set (see \cite{EV}, p. 153 for example), we get
 \[\zeta=-\Delta(\mathcal G\zeta-qx_2)=0 \quad\mbox{ a.e. in } \{\mathbf x\in\Pi\mid \mathcal G\zeta(\mathbf x)-qx_2=\mu_\zeta\}.\]
 Hence \eqref{lesss} is proved. 
\end{proof}

 Below we give some asymptotic estimates as $\varepsilon\to0^+.$   As in Section 1,  denote $\kappa=\|\varrho\|_1.$

\begin{lemma}\label{lem301}
There exists some $C_1>0,$ depending only on $\varrho$ and $q$, such that
\[(E-qI)(\zeta)\geq -\frac{\kappa^2}{4\pi}\ln\varepsilon-C_1,\quad\forall \,\zeta\in\Xi_\varepsilon.\]
\end{lemma}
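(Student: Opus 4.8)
The goal is a lower bound of the form $(E-qI)(\zeta)\geq -\frac{\kappa^2}{4\pi}\ln\varepsilon-C_1$ for all maximizers $\zeta\in\Xi_\varepsilon$. Since $\zeta$ is a maximizer, $(E-qI)(\zeta)=T_\varepsilon=\sup_{v\in\overline{\mathcal R(\varrho^\varepsilon)^W}}(E-qI)(v)$. The plan is therefore to exhibit a single, explicit test function $v_\varepsilon\in\mathcal R(\varrho^\varepsilon)$ and compute $(E-qI)(v_\varepsilon)$ from below; the supremum automatically dominates this value, giving the bound for every maximizer at once.

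First I would choose the test function to be a translate of $\varrho^\varepsilon$ that moves its support close to the boundary $\partial\Pi$, or more simply just use $\varrho^\varepsilon$ itself (or a vertical translate). The idea is to control the two pieces of the functional separately. For the impulse term $qI(v_\varepsilon)=q\int_\Pi x_2 v_\varepsilon\,d\mathbf x$: since $\varrho$ has bounded support and $\|\varrho\|_1=\kappa$, the scaling \eqref{sc9} gives $I(\varrho^\varepsilon)=\int_\Pi x_2\varepsilon^{-2}\varrho(\mathbf x/\varepsilon)\,d\mathbf x=\varepsilon\int_\Pi y_2\varrho(\mathbf y)\,d\mathbf y=\varepsilon\,I(\varrho)$, which is $O(\varepsilon)$ and hence bounded, contributing only to the constant $C_1$. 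The main term must come from the energy $E(v_\varepsilon)=\frac{1}{4\pi}\int_\Pi\int_\Pi\ln\frac{|\mathbf x-\bar{\mathbf y}|}{|\mathbf x-\mathbf y|}v_\varepsilon(\mathbf x)v_\varepsilon(\mathbf y)\,d\mathbf x\,d\mathbf y$. The self-interaction through the singular kernel $\ln\frac{1}{|\mathbf x-\mathbf y|}$ produces the $-\frac{\kappa^2}{4\pi}\ln\varepsilon$ leading order: after scaling $\mathbf x=\varepsilon\tilde{\mathbf x}$, the kernel $\ln\frac{1}{|\mathbf x-\mathbf y|}=\ln\frac{1}{\varepsilon}+\ln\frac{1}{|\tilde{\mathbf x}-\tilde{\mathbf y}|}$, and the constant part $\ln\frac{1}{\varepsilon}$ multiplies $\big(\int\varrho\big)^2=\kappa^2$, yielding exactly $\frac{1}{4\pi}\kappa^2\ln\frac{1}{\varepsilon}=-\frac{\kappa^2}{4\pi}\ln\varepsilon$.

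Carrying this out carefully, I would split $E(v_\varepsilon)$ into the positive logarithmic term $\frac{1}{4\pi}\int\int\ln\frac{1}{|\mathbf x-\mathbf y|}v_\varepsilon v_\varepsilon$ and the boundary-image term $\frac{1}{4\pi}\int\int\ln|\mathbf x-\bar{\mathbf y}|\,v_\varepsilon v_\varepsilon$. For the first, the substitution above isolates the $-\frac{\kappa^2}{4\pi}\ln\varepsilon$ term plus the $\varepsilon$-independent integral $\frac{1}{4\pi}\int\int\ln\frac{1}{|\tilde{\mathbf x}-\tilde{\mathbf y}|}\varrho\varrho$, which is finite by the $L^p$–$L^1$ bound on $\varrho$ with $p>2$ (this finiteness is essentially Lemma 1 of \cite{B11}). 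The boundary-image term $\ln|\mathbf x-\bar{\mathbf y}|$ is not singular once the support sits at fixed positive distance from $\partial\Pi$ after an appropriate vertical shift, and it too is bounded by a constant depending only on $\varrho$; if I keep the support near the boundary this term could be controlled directly, but a cleaner route is to translate $\varrho^\varepsilon$ upward by a fixed height so that the image charge is bounded away, making $\ln|\mathbf x-\bar{\mathbf y}|$ uniformly bounded on the (shrinking) support. Gathering all $\varepsilon$-independent contributions into a single constant $C_1=C_1(\varrho,q)$ completes the estimate.

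The step I expect to require the most care is handling the positive logarithmic self-energy, specifically verifying that the residual integral $\frac{1}{4\pi}\int_{\mathbb R^2}\int_{\mathbb R^2}\ln\frac{1}{|\tilde{\mathbf x}-\tilde{\mathbf y}|}\varrho(\tilde{\mathbf x})\varrho(\tilde{\mathbf y})\,d\tilde{\mathbf x}\,d\tilde{\mathbf y}$ is finite rather than $-\infty$, since the kernel is positive near the diagonal but negative for $|\tilde{\mathbf x}-\tilde{\mathbf y}|>1$. The positive near-diagonal part is absolutely convergent because $\varrho\in L^p$ with $p>2$ controls the weak singularity of $\ln$, and the negative far-field part is bounded since $\varrho$ has bounded support and finite mass; making this dichotomy rigorous, together with ensuring the boundary-image term does not contribute a hidden $\varepsilon$-dependence, is the only genuinely technical point. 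Everything else is the elementary scaling computation already rehearsed in Lemma \ref{lem31}.
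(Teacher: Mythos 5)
Your proposal is correct and follows essentially the same route as the paper: the paper also lower-bounds $T_\varepsilon$ by evaluating $E-qI$ at an explicit test function in $\mathcal R(\varrho^\varepsilon)$, namely $v(\mathbf x)=\varepsilon^{-2}\varrho^*\bigl((\mathbf x-\hat{\mathbf x})/\varepsilon\bigr)$ with $\hat{\mathbf x}=(0,2a)$, i.e.\ a concentrated blob at fixed positive height, exactly as in your ``translate upward'' variant. The only cosmetic difference is that the paper avoids your kernel splitting and change of variables by noting that on the support $|\mathbf x-\mathbf y|\le 2\varepsilon a$ while $|\mathbf x-\bar{\mathbf y}|\ge 2a$, so the combined kernel $\ln\bigl(|\mathbf x-\bar{\mathbf y}|/|\mathbf x-\mathbf y|\bigr)$ is bounded below pointwise by $-\ln\varepsilon$, which sidesteps any discussion of the residual self-energy integral.
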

\begin{proof}
 Let $\varrho^*$ be the radially decreasing rearrangement of $\varrho$ with respect to the origin.  Then by choosing $a>0$ such that $\pi a^2=|\{\mathbf x\in\Pi\mid \varrho(\mathbf x)>0\}|,$
we have  
\[{\rm supp}(\varrho^*)=\overline{B_a(\mathbf 0)}.\] 
  Define 
  \[v(\mathbf x)=\frac{1}{ \varepsilon^2}\varrho^*\left(\frac{\mathbf x-\hat{\mathbf x}}{\varepsilon}\right), \quad\hat{\mathbf x}=(0,2a).\]
 Then obviously ${\rm supp}(v)= \overline{B_{a\varepsilon}(\hat{\mathbf x})}$ and  $v\in\mathcal R(\varrho^\varepsilon)$. Hence
\[(E-qI)(\zeta)\geq (E-qI)(v),\quad\forall\,\zeta\in\Xi_\varepsilon.\]
By a direct calculation, we have
\begin{align*}
(E-qI)(v)=&\frac{1}{4\pi}\int_{\Pi}\int_{\Pi}\ln\frac{|\mathbf x-\bar{\mathbf y}|}{|\mathbf x-\mathbf y|}v(\mathbf x)v(\mathbf y)d\mathbf xd\mathbf y-q\int_{\Pi}x_2v(\mathbf x) d\mathbf x\\
=&\frac{1}{4\pi}\int_{B_{\varepsilon a}(\hat{\mathbf x})}\int_{B_{\varepsilon a}(\hat{\mathbf x})}\ln\frac{|\mathbf x-\bar{\mathbf y}|}{|\mathbf x-\mathbf y|}v(\mathbf x)v(\mathbf y)d\mathbf xd\mathbf y-q\int_{B_{\varepsilon a}(\hat{\mathbf x})}x_2v (\mathbf x) d\mathbf x
\end{align*}
For the first integral, since $|\mathbf x-\mathbf y|\leq 2\varepsilon a$ and $|\mathbf x-\bar{\mathbf y}|\geq 4a-2\varepsilon a\geq 2a$ for any $\mathbf x,\mathbf y\in B_{\varepsilon a}(\hat{\mathbf x}),$ we have
\begin{align*}
\frac{1}{4\pi}\int_{B_{\varepsilon a}(\hat{\mathbf x})}\int_{B_{\varepsilon a}(\hat{\mathbf x})}\ln\frac{|\mathbf x-\bar{\mathbf y}|}{|\mathbf x-\mathbf y|}v(\mathbf x)v(\mathbf y)d\mathbf xd\mathbf y
&\geq \frac{1}{4\pi}\int_{B_{\varepsilon a}(\hat{\mathbf x})}\int_{B_{\varepsilon a}(\hat{\mathbf x})}\ln\left(\frac{2a}{2\varepsilon a}\right)v(\mathbf x)v(\mathbf y)d\mathbf xd\mathbf y\\
&=-\frac{\kappa^2}{4\pi}\ln\varepsilon.
\end{align*}
For the second integral, by symmetry we have
\[\int_{B_{\varepsilon a}(\hat{\mathbf x})}x_2v (\mathbf x) d\mathbf x
=2a\kappa q.\]
Hence the proof is finished.
\end{proof}

\begin{lemma}\label{lem302}
There exists some $C_2>0,$ depending only on $\varrho$ and $\varrho$, such that
\[\int_\Pi\zeta(\mathcal G\zeta-qx_2-\mu_\zeta)d\mathbf x\leq C_2,\quad\forall \,\zeta\in\Xi_\varepsilon.\]
\end{lemma}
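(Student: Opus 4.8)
The plan is to rewrite the core integral as a Dirichlet energy and then control it by a uniform sup-norm estimate. Set $\psi=\mathcal G\zeta-qx_2$ and $w=(\psi-\mu_\zeta)^+$. Since $qx_2$ is harmonic and $\mathcal G\zeta\in W^{2,p}_{\mathrm{loc}}$ by elliptic regularity, the second derivatives of $\psi$ vanish a.e.\ on the level set $\{\psi=\mu_\zeta\}$, so $-\Delta w=\zeta$ a.e.\ in $\Pi$. Moreover $\psi=0$ on $\partial\Pi$ and $\mu_\zeta\ge0$ force $w=0$ on $\partial\Pi$, while Lemma \ref{ccc2} identifies $\{w>0\}=V_\zeta$, a bounded set; hence $w\in W^{1,2}_0(\Pi)$ with compact support. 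Testing $-\Delta w=\zeta$ against $w$ yields
\[\int_\Pi\zeta(\mathcal G\zeta-qx_2-\mu_\zeta)\,d\mathbf x=\int_\Pi\zeta w\,d\mathbf x=\int_\Pi|\nabla w|^2\,d\mathbf x.\]
Since $\int_\Pi\zeta\,d\mathbf x=\kappa$ by \eqref{sc10}, it suffices to produce a bound $\|w\|_{L^\infty(\Pi)}\le C(\varrho)$ independent of $\varepsilon$ and of $\zeta$, for then $\int_\Pi\zeta w\le\kappa\|w\|_{L^\infty(\Pi)}$.

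\textbf{A differential inequality for the distribution function.} For $t>0$ set $m(t)=|\{w>t\}|$ and $\Phi(t)=\int_{\{w>t\}}\zeta\,d\mathbf x$. For a.e.\ regular value $t>0$ the set $\{w>t\}$ is compactly contained in $\Pi$, and the co-area formula together with the flux identity give $\Phi(t)=\int_{\{w=t\}}|\nabla w|\,d\mathcal H^1$ and $-m'(t)=\int_{\{w=t\}}|\nabla w|^{-1}\,d\mathcal H^1$. Applying Lemma \ref{soboemb} to suitable truncations of $(w-t)^+$ yields the isoperimetric-type lower bound $\mathcal H^1(\{w=t\})\ge S^{-1}m(t)^{1/2}$, and Cauchy--Schwarz on the level set $\{w=t\}$ gives $\mathcal H^1(\{w=t\})^2\le\Phi(t)\,(-m'(t))$. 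Combining these and using $\Phi(t)\le\|\zeta\|_{L^p(\Pi)}\,m(t)^{1-1/p}$ (Hölder), we obtain
\[-m'(t)\ge S^{-2}\,\|\zeta\|_{L^p(\Pi)}^{-1}\,m(t)^{1/p}.\]

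\textbf{The $\varepsilon$-cancellation and conclusion.} Integrating the last inequality shows that $m(t)^{1-1/p}$ decreases at least linearly, so $m(t)=0$ (hence $w\le t$) for $t\ge T^\ast$, where
\[T^\ast=\frac{p}{p-1}\,S^{2}\,\|\zeta\|_{L^p(\Pi)}\,m(0)^{1-1/p}.\]
Here $m(0)=|V_\zeta|=\varepsilon^2|\{\varrho>0\}|$ and, by \eqref{sc10}, $\|\zeta\|_{L^p(\Pi)}=\|\varrho^\varepsilon\|_{L^p(\Pi)}=\varepsilon^{2/p-2}\|\varrho\|_{L^p(\Pi)}$. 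The powers of $\varepsilon$ cancel exactly, since $(\varepsilon^2)^{1-1/p}\varepsilon^{2/p-2}=1$, so $T^\ast$ depends only on $\varrho$ and $p$. Therefore $\|w\|_{L^\infty(\Pi)}\le T^\ast$ and $\int_\Pi\zeta(\mathcal G\zeta-qx_2-\mu_\zeta)\,d\mathbf x\le\kappa T^\ast=:C_2$, as claimed.

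\textbf{Main obstacle.} The crucial and only scaling-dependent step is the uniform $L^\infty$ bound on $w$: although $m(0)$ and $\|\zeta\|_{L^p}$ each depend on $\varepsilon$, their product entering $T^\ast$ is scale-invariant. The technical care is concentrated in justifying the co-area and flux identities for a.e.\ $t$ (via the $C^{1,\alpha}$ regularity of $\psi$ afforded by $p>2$ and Sard's theorem) and in deriving the isoperimetric inequality from Lemma \ref{soboemb} through truncation; the remainder is routine.
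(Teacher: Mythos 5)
Your argument is correct, and after the first step --- the integration-by-parts identity $\int_\Pi\zeta(\mathcal G\zeta-qx_2-\mu_\zeta)\,d\mathbf x=\int_\Pi|\nabla w|^2\,d\mathbf x$, which is exactly how the paper also begins --- it takes a genuinely different route. The paper stays at the level of integral norms: writing $p'=p/(p-1)$, it chains H\"older, the Sobolev inequality \eqref{soboemb1}, and H\"older again to get $\int_\Pi\zeta w\,d\mathbf x\le S\,\varepsilon^{-2/p'}\|\varrho\|_{L^p(\Pi)}|V_\zeta|^{1/p'}\|\nabla w\|_{L^2(\Pi)}$, cancels the powers of $\varepsilon$ against $|V_\zeta|^{1/p'}=\varepsilon^{2/p'}|\{\varrho>0\}|^{1/p'}$, and closes by absorbing $\|\nabla w\|_{L^2}$ into the left-hand side, yielding $C_2=S^2|\{\varrho>0\}|^{2/p'}\|\varrho\|_{L^p(\Pi)}^2$. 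You instead run a De Giorgi--Talenti level-set argument to obtain the stronger conclusion $\|(\mathcal G\zeta-qx_2-\mu_\zeta)^+\|_{L^\infty(\Pi)}\le T^*$ with $T^*$ independent of $\varepsilon$, and then bound the integral by $\kappa T^*$. The two proofs exploit the identical scale-invariant quantity: your $\|\zeta\|_{L^p}\,m(0)^{1-1/p}$ is the paper's $\|\zeta\|_{L^p}|V_\zeta|^{1/p'}$, since $1-1/p=1/p'$, so the $\varepsilon$-cancellation is the same in substance. What the paper's version buys is brevity and minimal regularity demands: no co-area formula, flux identity, or Sard-type input is needed. On that last point, note that $\mathcal G\zeta\in C^{1,\alpha}_{\rm loc}$ falls short of the $C^2$ regularity required by the classical Sard theorem, so your ``a.e.\ regular value'' statements should be justified through the weak formulation (testing $-\Delta(\mathcal G\zeta-qx_2)=\zeta$ against the truncations $h^{-1}\min\{(w-t)^+,h\}$ and using the co-area formula for $W^{1,1}$ functions) rather than through genuine smoothness of the level curves; this is standard but is where the technical weight of your approach sits. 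What your version buys is a uniform pointwise bound on the relative stream function over the vortex core --- an estimate in the spirit of Turkington's original method that could be of independent use --- though for Lemma \ref{lem302} itself it proves more than is required.
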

\begin{proof}
Fix $\zeta\in\Xi_\varepsilon$.
For simplicity, denote $\phi=\mathcal G\zeta-qx_2-\mu_\zeta$ and $\phi^+=\max\{\phi,0\}.$ Then by \eqref{lesss} we have
\begin{equation}
\{\mathbf x\in\Pi\mid \phi(\mathbf x)>0\}=V_\zeta.
\end{equation}
Since $\mu_\zeta\geq 0$ and $\mathcal G\zeta-q x_2\equiv 0$ on $\partial \Pi,$ we see that  $\phi^+$ vanishes on $\partial\Pi$.
Therefore we can apply  integration by parts to get
\begin{equation}\label{ooh1}
\int_\Pi\phi\zeta d\mathbf x=\int_\Pi\zeta\phi^+ d\mathbf x=\int_\Pi|\nabla \phi^+|^2 d\mathbf x.
\end{equation}
On the other hand,  
\begin{align}
\int_\Pi\phi\zeta d\mathbf x& \leq \|\zeta\|_{p}\|\phi^+\|_{L^q(\Pi)}  \quad(q:=p/(p-1)) \label{uuio}\\
&= \|\varrho^\varepsilon\|_{L^p(\Pi)}\|\phi^+\|_{L^q(\Pi)}\\
&= \varepsilon^{-2/q}\|\varrho\|_{L^p(\Pi)} \|\phi^+\|_{L^q(\Pi)}\label{ii33}\\
&\leq \varepsilon^{-2/q}\|\varrho\|_{L^p(\Pi)} |V_\zeta|^{\frac{2-q}{2q}}\|\phi^+\|_2\label{ii44}\\
&\leq S\varepsilon^{-2/q}\|\varrho\|_{L^p(\Pi)} |V_\zeta|^{\frac{2-q}{2q}}\|\nabla\phi^+\|_{L^1(\Pi)}\label{ii55}\\
&\leq S\varepsilon^{-2/q}\|\varrho\|_{L^p(\Pi)} |V_\zeta|^{\frac{2-q}{2q}}|V_\zeta|^{\frac{1}{2}}\|\nabla\phi^+\|_{L^2(\Pi)}\label{ii66}\\
&=S |\{\mathbf x\in\Pi\mid \varrho(\mathbf x)>0\}|^{\frac{1}{q}} \|\varrho\|_{L^p(\Pi)}\|\nabla\phi^+\|_{L^2(\Pi)},\label{ii88}
\end{align}
where  $S$ is the   positive constant in Lemma \ref{soboemb}.
Note that we have used H\"older's inequality in  \eqref{uuio}, \eqref{ii44}, \eqref{ii66}, the Sobolev inequality  \eqref{soboemb1}
in \eqref{ii55}, and the following fact in \eqref{ii88}
\[|V_\zeta|=|\{\mathbf x\in\Pi\mid \zeta(\mathbf x)>0\}|=|\{\mathbf x\in\Pi\mid \varrho^\varepsilon(\mathbf x)>0\}|=\varepsilon^2|\{\mathbf x\in\Pi\mid \varrho(\mathbf x)>0\}|.\]
Combining \eqref{ooh1} and \eqref{ii88}, we obtain
\[\int_\Pi\phi\zeta d\mathbf x\leq S^2 |\{\mathbf x\in\Pi\mid \varrho(\mathbf x)>0\}|^{\frac{2}{q}} \|\varrho\|^2_{L^p(\Pi)},\]
as required.

 \end{proof}
\begin{remark}
For any $\zeta\in\Xi_\varepsilon$, it is easy to check  by integration by parts that 
\[\frac{1}{2}\int_{V_\zeta}|\nabla \mathcal G\zeta|^2 dx=\frac{1}{2}\int_\Pi\zeta(\mathcal G\zeta-qx_2-\mu_\zeta).\]
Therefore Lemma \ref{lem302} in fact says that the kinetic energy of the fluid on the vortex core $V_\zeta$ has a uniform upper bound.
\end{remark}

With Lemmas \ref{lem301} and \ref{lem302} at hand, we can easily deduce a lower bound for the Lagrangian multiplier $\mu_\zeta.$
\begin{lemma}\label{lem303}
There exists some $C_3>0,$ depending only on $\varrho$ and $q$, such that
\begin{equation}\label{zyw1}
\mu_\zeta\geq -\frac{\kappa}{2\pi}\ln\varepsilon-C_3,\quad\forall \,\zeta\in\Xi_\varepsilon.
\end{equation}
As a consequence, for any $\zeta\in\Xi_\varepsilon$ and $\mathbf x\in V_\zeta$ it holds that
\begin{equation}\label{zyw2}
\mathcal G\zeta(\mathbf x)\geq -\frac{\kappa}{2\pi}\ln\varepsilon+qx_2-C_3.
\end{equation}
\end{lemma}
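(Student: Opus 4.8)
The plan is to obtain \eqref{zyw1} as a direct algebraic consequence of the two preceding lemmas, the only inputs being the mass identity $\int_\Pi\zeta\,d\mathbf x=\kappa$ and the sign of the impulse. First I would rewrite the quantity controlled in Lemma \ref{lem302} in terms of the energy and impulse functionals. Since $E(\zeta)=\tfrac12\int_\Pi\zeta\mathcal G\zeta\,d\mathbf x$ and $I(\zeta)=\int_\Pi x_2\zeta\,d\mathbf x$, and since $\zeta\in\Xi_\varepsilon\subset\mathcal R(\varrho^\varepsilon)$ gives $\int_\Pi\zeta\,d\mathbf x=\|\varrho^\varepsilon\|_{L^1(\Pi)}=\kappa$ by \eqref{sc10} with $s=1$, expanding the integrand yields the identity
\[
\int_\Pi\zeta(\mathcal G\zeta-qx_2-\mu_\zeta)\,d\mathbf x=2E(\zeta)-qI(\zeta)-\kappa\mu_\zeta.
\]
Combining this with the upper bound $C_2$ from Lemma \ref{lem302} gives $\kappa\mu_\zeta\geq 2E(\zeta)-qI(\zeta)-C_2$.

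Next I would bound $2E(\zeta)-qI(\zeta)$ from below. Writing $2E(\zeta)-qI(\zeta)=2(E-qI)(\zeta)+qI(\zeta)$ and invoking Lemma \ref{lem301} for the first term, together with the elementary fact that $I(\zeta)=\int_\Pi x_2\zeta\,d\mathbf x\geq 0$ (because $\zeta\geq0$ a.e.\ and $x_2>0$ in $\Pi$), I get
\[
2E(\zeta)-qI(\zeta)\geq 2\Bigl(-\frac{\kappa^2}{4\pi}\ln\varepsilon-C_1\Bigr)+qI(\zeta)\geq -\frac{\kappa^2}{2\pi}\ln\varepsilon-2C_1.
\]
Feeding this into the previous inequality and dividing by $\kappa>0$ produces
\[
\mu_\zeta\geq-\frac{\kappa}{2\pi}\ln\varepsilon-\frac{2C_1+C_2}{\kappa},
\]
so that setting $C_3=(2C_1+C_2)/\kappa$, which depends only on $\varrho$ and $q$, establishes \eqref{zyw1}.

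For the consequence \eqref{zyw2}, I would simply appeal to the characterization of the vortex core in Lemma \ref{ccc2}: for any $\mathbf x\in V_\zeta$ we have $\mathcal G\zeta(\mathbf x)-qx_2>\mu_\zeta$, whence $\mathcal G\zeta(\mathbf x)>qx_2+\mu_\zeta\geq -\frac{\kappa}{2\pi}\ln\varepsilon+qx_2-C_3$ by \eqref{zyw1}, which gives the stated estimate.

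I do not expect a genuine obstacle here: given Lemmas \ref{lem301} and \ref{lem302} the argument is pure bookkeeping. The only points requiring a little care are correctly converting the integral into $2E(\zeta)-qI(\zeta)-\kappa\mu_\zeta$ (in particular keeping the factor $2$ on the energy), using the mass conservation $\int_\Pi\zeta\,d\mathbf x=\kappa$, and discarding the nonnegative impulse term $qI(\zeta)$ in the right direction. The substance of the estimate lives entirely in the two earlier lemmas, which supply the matching logarithmic growth $-\tfrac{\kappa^2}{4\pi}\ln\varepsilon$ and the $\varepsilon$-uniform core bound.
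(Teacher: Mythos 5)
Your proof is correct and is essentially the paper's argument: the identity $\int_\Pi\zeta(\mathcal G\zeta-qx_2-\mu_\zeta)\,d\mathbf x=2E(\zeta)-qI(\zeta)-\kappa\mu_\zeta$ is just a rearranged form of the energy identity the paper solves for $\mu_\zeta$, and both proofs then feed in Lemma \ref{lem301}, Lemma \ref{lem302}, and the nonnegativity of $I(\zeta)$ to arrive at the same constant $C_3=(2C_1+C_2)/\kappa$. The deduction of \eqref{zyw2} from Lemma \ref{ccc2} likewise matches the paper.
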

\begin{proof}
Fix $\zeta\in\Xi_\varepsilon$. In view of the following equality
\[(E-qI)(\zeta)=\frac{1}{2}\int_\Pi\zeta(\mathcal G\zeta-qx_2-\mu_\zeta)d\mathbf x-\frac{q}{2}\int_\Pi x_2\zeta d\mathbf x+\frac{\kappa}{2}\mu_\zeta,\]
we have
\begin{align*}
\mu_\zeta&=\frac{2}{\kappa}(E-qI)(\zeta)-\frac{1}{\kappa}\int_\Pi\zeta(\mathcal G\zeta-qx_2-\mu_\zeta)d\mathbf x+\frac{q}{\kappa}\int_\Pi x_2\zeta d\mathbf x\\
&\geq -\frac{\kappa}{2\pi}\ln\varepsilon-\frac{2C_1+C_2}{\kappa}.
\end{align*}
Here we used Lemma \ref{lem301} and Lemma \ref{lem302}. Hence \eqref{zyw1} is proved. Combining \eqref{lesss} and \eqref{zyw1}, we get \eqref{zyw2}.

\end{proof}

The following lemma is an improvement of Lemma 1 in \cite{B11}.

\begin{lemma}\label{lem304}
There exists some $C_4>0,$ depending only on $\varrho$ and $q$, such that for any $\zeta\in \Xi_\varepsilon$, it holds that
\[\mathcal G\zeta(\mathbf x)\leq -\frac{\kappa}{2\pi}\ln\varepsilon+C_4(1+\ln x_2),\quad\forall \,\mathbf x=(x_1,x_2)\in\Pi,\,x_2\geq1.\]
\end{lemma}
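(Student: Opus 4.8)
The plan is to bound $\mathcal G\zeta$ pointwise by exploiting the explicit form of the kernel rather than estimating $E(\zeta)$ or the impulse. Writing $\mathbf x=(x_1,x_2)$, $\mathbf y=(y_1,y_2)$ and using the elementary identity $|\mathbf x-\bar{\mathbf y}|^2=|\mathbf x-\mathbf y|^2+4x_2y_2$, I would first rewrite the kernel as
\[
\ln\frac{|\mathbf x-\bar{\mathbf y}|}{|\mathbf x-\mathbf y|}=\frac12\ln\left(1+\frac{4x_2y_2}{|\mathbf x-\mathbf y|^2}\right),
\]
which is nonnegative and, unlike the additive splitting $\ln|\mathbf x-\bar{\mathbf y}|-\ln|\mathbf x-\mathbf y|$, retains the cancellation that makes it decay for $\mathbf y$ far from $\mathbf x$. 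The two features I must produce are the \emph{sharp} constant $\kappa/2\pi$ in front of $-\ln\varepsilon$ and the growth $\ln x_2$; I would obtain the former from a rearrangement argument and the latter from a geometric restriction of the integration domain.

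Set $t=4x_2y_2/|\mathbf x-\mathbf y|^2$ and split $\Pi$ into $A=\{t\le1\}$ and $B=\{t>1\}$. On $A$ the kernel is bounded by $\tfrac12\ln2$, so its contribution to $\mathcal G\zeta(\mathbf x)$ is at most $(\ln2)\kappa/4\pi$, a constant. On $B$ I would use $\ln(1+t)\le\ln(2t)$ to get
\[
\frac12\ln\left(1+\frac{4x_2y_2}{|\mathbf x-\mathbf y|^2}\right)\le\frac{\ln2}{2}+\frac12\ln(4x_2y_2)+\ln\frac{1}{|\mathbf x-\mathbf y|}.
\]
The key point is that $B$ already forces $y_2$ to be comparable to $x_2$: since $(x_2-y_2)^2\le|\mathbf x-\mathbf y|^2<4x_2y_2$ on $B$, one finds $y_2<(3+2\sqrt2)x_2$, hence $\tfrac12\ln(4x_2y_2)\le\ln x_2+C''$ there with $C''$ absolute. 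This controls the ``image'' term by $\tfrac{\kappa}{2\pi}\ln x_2+C$ \emph{without any a priori bound on the impulse $I(\zeta)$}, which is essential because $I(\zeta)$ cannot be bounded at this stage. The remaining term $\ln\tfrac{1}{|\mathbf x-\mathbf y|}$, restricted to $B$, is at most $(\ln\tfrac1{|\mathbf x-\mathbf y|})^+$, whose integral I may extend to all of $\Pi$ since the integrand is nonnegative.

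It then remains to bound $\tfrac1{2\pi}\int_\Pi(\ln\tfrac1{|\mathbf x-\mathbf y|})^+\zeta\,d\mathbf y$ with the correct constant. A crude H\"older estimate would produce a spurious extra multiple of $\ln\tfrac1\varepsilon$, so instead I would invoke Lemma \ref{rri1}: since $\mathbf y\mapsto(\ln\tfrac1{|\mathbf x-\mathbf y|})^+$ is symmetric-decreasing about $\mathbf x$ with rearrangement $(\ln\tfrac1{|\mathbf y|})^+$, and $\zeta\in\mathcal R(\varrho^\varepsilon)$ satisfies $\zeta^*=(\varrho^\varepsilon)^*=\varepsilon^{-2}\varrho^*(\cdot/\varepsilon)$, we obtain
\[
\int_\Pi\left(\ln\frac1{|\mathbf x-\mathbf y|}\right)^+\zeta\,d\mathbf y\le\int_{\mathbb R^2}\left(\ln\frac1{|\mathbf y|}\right)^+\varepsilon^{-2}\varrho^*\!\left(\frac{\mathbf y}{\varepsilon}\right)d\mathbf y.
\]
The substitution $\mathbf y=\varepsilon\mathbf z$ together with $(\ln\tfrac1\varepsilon+\ln\tfrac1{|\mathbf z|})^+\le\ln\tfrac1\varepsilon+(\ln\tfrac1{|\mathbf z|})^+$ bounds this by $-\kappa\ln\varepsilon+\int_{\mathbb R^2}(\ln\tfrac1{|\mathbf z|})^+\varrho^*\,d\mathbf z$, and the last integral is finite and depends only on $\varrho$ by H\"older. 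Dividing by $2\pi$ yields exactly $-\tfrac{\kappa}{2\pi}\ln\varepsilon$ plus a constant; adding the contributions of $A$ and $B$ and using $\ln x_2\ge0$ for $x_2\ge1$ gives the claim with a suitable $C_4$.

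The main obstacle is the coupling of two difficulties: producing the correct leading coefficient $\kappa/2\pi$ (which rules out $L^p$/H\"older bounds and forces the rearrangement argument) while simultaneously controlling the boundary-image term with no available bound on $I(\zeta)$. The decisive observation resolving the latter is that the kernel is only of logarithmic size on the set $B$, where the geometry automatically confines $y_2$ to be of order $x_2$; this is precisely what allows $\ln(4x_2y_2)$ to be absorbed into $\ln x_2$.
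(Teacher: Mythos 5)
Your proof is correct and follows essentially the same strategy as the paper's: split the integral into a region where the kernel is bounded by an absolute constant and a complementary region where it is bounded by $\ln(Cx_2)+\ln\frac{1}{|\mathbf x-\mathbf y|}$, then apply the rearrangement inequality of Lemma \ref{rri1} to the logarithmic singularity to extract the sharp term $-\frac{\kappa}{2\pi}\ln\varepsilon$. The only cosmetic difference is that the paper splits on $|\mathbf x-\mathbf y|\gtrless x_2$ and uses $|\mathbf x-\bar{\mathbf y}|\le 3\max\{x_2,|\mathbf x-\mathbf y|\}$, whereas you split on the ratio $t=4x_2y_2/|\mathbf x-\mathbf y|^2$ and derive the confinement $y_2\le(3+2\sqrt2)x_2$ algebraically.
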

\begin{proof}
For $\zeta\in\Xi_\varepsilon,$ we have
\begin{align*}
\mathcal G\zeta(\mathbf x)&=\frac{1}{2\pi}\int_\Pi \ln\frac{|\mathbf x-\bar{\mathbf y}|}{|\mathbf x-\mathbf y|}\zeta(\mathbf y)d\mathbf y\\
&=\frac{1}{2\pi}\int_{\mathbf y\in\Pi, |\mathbf y-\mathbf x|\geq x_2}\ln\frac{|\mathbf x-\bar{\mathbf y}|}{|\mathbf x-\mathbf y|}\zeta(\mathbf y)d\mathbf y+\frac{1}{2\pi}\int_{\mathbf y\in\Pi, |\mathbf y-\mathbf x|\leq x_2}\ln\frac{|\mathbf x-\bar{\mathbf y}|}{|\mathbf x-\mathbf y|}\zeta(\mathbf y)d\mathbf y\\
&:=A+B.
\end{align*}
Notice that when $|\mathbf x-\mathbf y|\geq x_2$ we have
\[|\mathbf x-\bar{\mathbf y}|\leq |\bar{\mathbf x}-{\mathbf x}|+|\mathbf x-\mathbf y|= 2x_2+|\mathbf x-\mathbf y|\leq 3|\mathbf x-\mathbf y|,\]
hence
\[A=\frac{1}{2\pi}\int_{\mathbf y\in\Pi, |\mathbf y-\mathbf x|\geq x_2}\ln\frac{|\mathbf x-\bar{\mathbf y}|}{|\mathbf x-\mathbf y|}\zeta(\mathbf y)d\mathbf y\leq \frac{\ln3}{2\pi}\int_\Pi\zeta(\mathbf y)d\mathbf y=\frac{\ln3}{2\pi}\kappa.\]
To estimate  $B$, notice that when  $|\mathbf x-\mathbf y|\leq x_2$ it holds that
\[|\mathbf x-\bar{\mathbf y}|\leq |\bar{\mathbf x}-{\mathbf x}|+|\mathbf x-\mathbf y|\leq 2x_2+|\mathbf x-\mathbf y|\leq 3x_2,\]
therefore
\begin{equation}\label{2003}
\begin{split}
B&=\frac{1}{2\pi}\int_{\mathbf y\in\Pi, |\mathbf y-\mathbf x|\leq x_2}\ln\frac{|\mathbf x-\bar{\mathbf y}|}{|\mathbf x-\mathbf y|}\zeta(\mathbf y)d\mathbf y\\
&\leq \frac{1}{2\pi}\int_\Pi\ln(3x_2)\zeta(\mathbf y)d\mathbf y+\frac{1}{2\pi}\int_\Pi\ln\frac{1}{|\mathbf x-\mathbf y|}\zeta(\mathbf y)d\mathbf y\\
&\leq \frac{\ln(3x_2)}{2\pi}\kappa+\frac{1}{2\pi}\int_{B_{\varepsilon a}(\mathbf 0)}\ln\frac{1}{|\mathbf y|}\zeta^*(\mathbf y)d\mathbf y\\
&= \frac{\ln(3x_2)}{2\pi}\kappa-\frac{\kappa}{2\pi}\ln\varepsilon+\frac{\kappa}{2\pi}\int_{B_{a}(\mathbf 0)}\ln\frac{1}{|\mathbf y|}\varrho^*(\mathbf y)d\mathbf y.
\end{split}
\end{equation}
Here we used the rearrangement inequality in Lemma \ref{rri1}.
\end{proof}

Now we can give a uniform bound for the vortex core $V_\zeta$ in the $x_2$ direction.

\begin{lemma}\label{lem305}
There exists some $C_5>0,$ depending only on $\varrho$ and $q$, such that
 \begin{equation}\label{ccp1}
V_\zeta\subset\mathbb R\times[0, C_5],\quad\forall \,\zeta\in\Xi_\varepsilon.
 \end{equation}
 As a consequence,  it holds that
 \begin{equation}\label{imup}
 I(\zeta)\leq C_5\kappa,\quad \forall \,\zeta\in\Xi_\varepsilon.
 \end{equation}
\end{lemma}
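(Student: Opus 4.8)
The plan is to combine the lower bound on $\mathcal G\zeta$ over the vortex core from Lemma \ref{lem303} with the far-field upper bound from Lemma \ref{lem304}, exploiting the crucial fact that the two bounds share the same leading $\varepsilon$-dependent term $-\frac{\kappa}{2\pi}\ln\varepsilon$, which therefore cancels. First I would fix $\zeta\in\Xi_\varepsilon$ and consider an arbitrary point $\mathbf x=(x_1,x_2)\in V_\zeta$ with $x_2\geq1$. Since $\mathbf x$ lies in the core, the estimate \eqref{zyw2} yields
\[
-\frac{\kappa}{2\pi}\ln\varepsilon+qx_2-C_3\leq \mathcal G\zeta(\mathbf x),
\]
while, because $x_2\geq1$, Lemma \ref{lem304} gives
\[
\mathcal G\zeta(\mathbf x)\leq -\frac{\kappa}{2\pi}\ln\varepsilon+C_4(1+\ln x_2).
\]

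Chaining these and cancelling the common term $-\frac{\kappa}{2\pi}\ln\varepsilon$ leaves an inequality free of $\varepsilon$,
\[
qx_2\leq C_4\ln x_2+(C_3+C_4),
\]
whose left side grows linearly in $x_2$ and whose right side grows only logarithmically. Hence $x_2$ cannot exceed a threshold determined solely by $q$, $C_3$ and $C_4$, i.e.\ solely by $\varrho$ and $q$. Concretely I would take $C_5=\max\{1,t_\ast\}$, where $t_\ast$ denotes the largest solution of $qt=C_4\ln t+C_3+C_4$; then every $\mathbf x\in V_\zeta$ with $x_2\geq1$ satisfies $x_2\leq C_5$, while points with $x_2<1$ satisfy $x_2<1\leq C_5$ trivially. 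This proves $V_\zeta\subset\mathbb R\times[0,C_5]$, which is \eqref{ccp1}. The consequence \eqref{imup} then follows at once: since $\zeta\geq0$ is supported in $V_\zeta$, where $x_2\leq C_5$,
\[
I(\zeta)=\int_{V_\zeta}x_2\zeta\,d\mathbf x\leq C_5\int_\Pi\zeta\,d\mathbf x=C_5\|\varrho^\varepsilon\|_{L^1(\Pi)}=C_5\kappa,
\]
where the final equalities use $\zeta\in\mathcal R(\varrho^\varepsilon)$ together with the scaling identity \eqref{sc10} at $s=1$.

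The hard part is already behind us by this stage: once Lemmas \ref{lem303} and \ref{lem304} are available, there is no real obstacle, and the entire substance of the argument is the cancellation of the dominant $\ln\varepsilon$ contribution—which is exactly why those two lemmas were engineered to carry the identical leading coefficient $-\kappa/(2\pi)$. The only point demanding mild care is that Lemma \ref{lem304} holds only for $x_2\geq1$, so one must run the comparison on the region $\{x_2\geq1\}$ and dispose of the strip $\{0\leq x_2<1\}$ separately, which is immediate once $C_5\geq1$.
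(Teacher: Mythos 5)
Your proposal is correct and follows essentially the same route as the paper: chain the core lower bound \eqref{zyw2} with the far-field upper bound of Lemma \ref{lem304} so that the $-\frac{\kappa}{2\pi}\ln\varepsilon$ terms cancel, leaving $qx_2-C_3\leq C_4(1+\ln x_2)$, which forces $x_2\leq C_5$; the deduction of \eqref{imup} is the same trivial integration the paper leaves implicit.
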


\begin{proof}
Fix $\zeta\in\Sigma_\varepsilon$ and $\mathbf x=(x_1,x_2)\in V_\zeta$.
Without loss of generality we assume $x_2\geq 1$, then Lemma \ref{lem304} can be applied, which gives
 \begin{equation}\label{ape1}
 \mathcal G\zeta(\mathbf x)\leq -\frac{\kappa}{2\pi}\ln\varepsilon+C_4(1+\ln x_2).
 \end{equation}
On the other hand, recalling \eqref{zyw2} we have
 \begin{equation}\label{ape2}
 \mathcal G\zeta(\mathbf x)\geq-\frac{\kappa}{2\pi}\ln\varepsilon+qx_2-C_3.
 \end{equation}
From \eqref{ape1} and \eqref{ape2} we obtain
\[qx_2-C_3\leq C_4(1+\ln x_2),\]
which implies the existence of some $C_5>0,$ depending only on $C_3$ and $C_4$, such that
$x_2\leq C_5.$
Hence the proof is finished.
\end{proof}

To deduce a bound for ${\rm supp}(\zeta)$ in the $x_1$ direction, we need the following lemma. Recall the definition of  $\Xi^0_\varepsilon$ in \eqref{xi0}.
\begin{lemma}\label{lem3060}
There exists some $C_6>0,$ depending only on $\varrho$ and $q$, such that
for any $\zeta\in\Xi^0_\varepsilon$  it holds that
\[\mathcal G\zeta(\mathbf x)\leq C_6\varepsilon^{2/p-2}|x_1|^{-1/(2p)},\quad\forall\,\mathbf x=(x_1,x_2)\in \Pi, \,|x_1|\geq 1.\]

\end{lemma}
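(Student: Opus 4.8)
The plan is to apply Burton's decay estimate (Lemma \ref{lemm4}) with $s=p$ and then to tame the unwanted $x_2$--factor it produces. First I would observe that every $\zeta\in\Xi^0_\varepsilon$ is genuinely Steiner-symmetric about the $x_2$ axis: by Lemma \ref{lemm2} it is a translate of a Steiner-symmetric function, and since it is in addition even in $x_1$, the horizontal translation must vanish, so $\zeta$ itself is even in $x_1$ and decreasing in $|x_1|$. Hence Lemma \ref{lemm4} applies and gives, for $|x_1|\geq1$,
\[\mathcal G\zeta(\mathbf x)\leq K\big(I(\zeta)+\|\zeta\|_{L^1(\Pi)}+\|\zeta\|_{L^p(\Pi)}\big)x_2|x_1|^{-1/(2p)}.\]
Using $I(\zeta)\leq C_5\kappa$ from Lemma \ref{lem305}, $\|\zeta\|_{L^1(\Pi)}=\kappa$, and $\|\zeta\|_{L^p(\Pi)}=\varepsilon^{2/p-2}\|\varrho\|_{L^p(\Pi)}$ from \eqref{sc10}, and recalling $\varepsilon^{2/p-2}\to+\infty$, the bracket is $\leq C\varepsilon^{2/p-2}$ for $\varepsilon$ small, so
\[\mathcal G\zeta(\mathbf x)\leq C\varepsilon^{2/p-2}\,x_2\,|x_1|^{-1/(2p)},\qquad|x_1|\geq1.\]
In particular, for $x_2\leq 2C_5$ this already yields the claim with $C_6=2CC_5$. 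The only remaining issue is the factor $x_2$ when $x_2$ is large.

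For $x_2>2C_5$ I would argue directly, exploiting $V_\zeta\subset\mathbb R\times[0,C_5]$ (Lemma \ref{lem305}), so that $y_2\leq C_5$ and $|\mathbf x-\mathbf y|\geq x_2-y_2\geq x_2/2$ on the support. Using the elementary bound $\ln\frac{|\mathbf x-\bar{\mathbf y}|}{|\mathbf x-\mathbf y|}\leq\frac{2x_2y_2}{|\mathbf x-\mathbf y|^2}$, which follows from $|\mathbf x-\bar{\mathbf y}|^2-|\mathbf x-\mathbf y|^2=4x_2y_2$ together with $\ln(1+t)\leq t$, I would split the integral defining $\mathcal G\zeta$ according to $|y_1|\leq|x_1|/2$ and $|y_1|>|x_1|/2$. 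On the first region $(x_1-y_1)^2\geq x_1^2/4$ and $(x_2-y_2)^2\geq x_2^2/4$ give $|\mathbf x-\mathbf y|^2\geq(x_1^2+x_2^2)/4$, and after $\frac{x_2}{x_1^2+x_2^2}\leq\frac{1}{2|x_1|}$ this part is bounded by $C\kappa|x_1|^{-1}\leq C\kappa|x_1|^{-1/(2p)}\leq C_6\varepsilon^{2/p-2}|x_1|^{-1/(2p)}$, since $\varepsilon^{2/p-2}\geq1$ for $\varepsilon\leq1$.

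The genuine difficulty is the tail region $|y_1|>|x_1|/2$: there the kernel only yields a bound of order $1/x_2$ with no decay in $|x_1|$, so the size of $\zeta$ alone is not enough to conclude. I would resolve this using an a priori (non-uniform in $\varepsilon$) bound on the horizontal extent of the support. Indeed, on $V_\zeta$ one has $x_2\leq C_5$, so the estimate above and the lower bound \eqref{zyw2} give, for $\mathbf x\in V_\zeta$ with $|x_1|\geq1$,
\[-\tfrac{\kappa}{2\pi}\ln\varepsilon-C_3\leq\mathcal G\zeta(\mathbf x)\leq CC_5\varepsilon^{2/p-2}|x_1|^{-1/(2p)},\]
which forces $V_\zeta\subset\{|x_1|\leq R_\varepsilon\}$ with $R_\varepsilon=\big(CC_5\varepsilon^{2/p-2}/(-\tfrac{\kappa}{2\pi}\ln\varepsilon-C_3)\big)^{2p}$.

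Consequently, if $|x_1|>2R_\varepsilon$ then the tail region meets no support and contributes nothing, so the near-axis estimate already closes the argument; while if $1\leq|x_1|\leq2R_\varepsilon$, a Hölder estimate in the tail (using $\|\zeta\|_{L^p(\Pi)}|V_\zeta|^{1/p'}=\|\varrho\|_{L^p(\Pi)}|\{\varrho>0\}|^{1/p'}$ and $|\mathbf x-\mathbf y|\geq x_2/2$) bounds that region by a constant of order $1/x_2\leq1/(2C_5)$, whereas the target right-hand side satisfies $C_6\varepsilon^{2/p-2}|x_1|^{-1/(2p)}\geq c\,(-\ln\varepsilon)\to+\infty$ uniformly on this range of $|x_1|$; hence for $\varepsilon$ small the constant is dominated and the claim holds. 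I expect this last step — controlling the horizontal tail for large $x_2$ by combining the crude support bound $R_\varepsilon$ with the logarithmic blow-up of the right-hand side — to be the main obstacle, the remaining estimates being routine.
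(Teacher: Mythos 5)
Your proposal is correct, and its engine is exactly the paper's: apply Lemma \ref{lemm4} with $s=p$, then insert $I(\zeta)\leq C_5\kappa$, $\|\zeta\|_{L^1(\Pi)}=\kappa$ and $\|\zeta\|_{L^p(\Pi)}=\varepsilon^{2/p-2}\|\varrho\|_{L^p(\Pi)}$, absorbing the bounded terms into a single multiple of $\varepsilon^{2/p-2}$. The divergence is in how the factor $x_2$ produced by Lemma \ref{lemm4} is handled. The paper simply replaces $x_2$ by $C_5$, citing \eqref{ccp1}; that substitution is legitimate only when $x_2\leq C_5$ (in particular at points of the vortex core $V_\zeta$), which is the only place the lemma is subsequently invoked (in the proof of Lemma \ref{lem306} one takes $\mathbf x\in V_\zeta$), but it does not literally prove the inequality for every $\mathbf x\in\Pi$ with $|x_1|\geq 1$ as the statement claims. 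You noticed this and supplied the missing regime $x_2>2C_5$ by a direct estimate: the pointwise kernel bound $\ln\bigl(|\mathbf x-\bar{\mathbf y}|/|\mathbf x-\mathbf y|\bigr)\leq 2x_2y_2/|\mathbf x-\mathbf y|^2$, a split in $|y_1|$, and the crude horizontal confinement $V_\zeta\subset\{|x_1|\leq R_\varepsilon\}$ — which is precisely the content of the paper's Lemma \ref{lem306}, rederived in-line with no circularity since it uses only the already-established near-core case of the bound. The price of your extra step is a further shrinking of $\varepsilon_0$ so that the logarithmic blow-up of the right-hand side dominates the $O(1)$ tail contribution; this is consistent with the paper's standing convention that $\varepsilon_0$ may be taken as small as needed depending only on $\varrho$ and $q$. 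In short: same key lemma and same estimates where the paper argues, plus an additional (and, for the lemma as literally stated, necessary) case analysis that the paper's proof silently skips.
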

\begin{proof}
Fix $\zeta\in\Xi^0_\varepsilon$.  Then $\zeta$ is Steiner-symmetric in the $x_2$ axis.
Choosing $s=p$ in Lemma \ref{lemm4} and applying it to $\zeta$, we get for any
$\mathbf x\in\Pi,$ $|x_1|\geq 1$ that
\begin{equation}
\mathcal G\zeta(\mathbf x)\leq K\left(I(\zeta)+\|\zeta\|_{L^1(\Pi)}+\|\zeta\|_{L^p(\Pi)}\right)x_2|x_1|^{-1/(2p)}.
\end{equation}
Taking into account \eqref{ccp1}, \eqref{imup}, and the following fact
\[\|\zeta\|_{L^p(\Pi)}=\|\varrho^\varepsilon\|_{L^p(\Pi)}=\varepsilon^{2/p-2}\|\varrho\|_{L^p(\Pi)},\]
we further get 
\begin{align*}
\mathcal G\zeta(\mathbf x)&\leq K\left(C_5\kappa+\kappa+\varepsilon^{2/p-2}\|\varrho\|_{L^p(\Pi)}\right)C_5|x_1|^{-1/(2p)}\\
&= KC_5\varepsilon^{2/p-2}\left((C_5 \kappa+\kappa)\varepsilon^{2-2/p}+\|\varrho\|_{L^p(\Pi)}\right)|x_1|^{-1/(2p)}\\
&\leq KC_5\varepsilon^{2/p-2}\left((C_5 \kappa+\kappa)\varepsilon_0^{2-2/p}+\|\varrho\|_{L^p(\Pi)}\right)|x_1|^{-1/(2p)}.
\end{align*}
The desired estimate follows by choosing
\[C_6=KC_5 \left((C_5 \kappa+\kappa)\varepsilon_0^{2-2/p}+\|\varrho\|_{L^p(\Pi)}\right).\]

\end{proof}

Using Lemma \ref{lem3060},
we can  give a  rough  bound for the vortex core $V_\zeta$ in the $x_1$ direction as follows.
\begin{lemma}\label{lem306}
There exists some $C_7>0,$ depending only on $\varrho$ and $q$, such that
 \begin{equation}\label{ccp2}
{\rm diam}(V_\zeta)\leq C_7\varepsilon^{4-4p},\quad\forall\,\zeta\in\Xi_\varepsilon. \end{equation}
\end{lemma}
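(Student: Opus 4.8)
The plan is to exploit the competition between the lower bound for $\mathcal G\zeta$ on the vortex core coming from the Lagrangian multiplier and the upper bound expressing the horizontal decay of $\mathcal G\zeta$. Before doing anything I would reduce to the case $\zeta\in\Xi^0_\varepsilon$. The functionals $E$ and $I$, the rearrangement class $\mathcal R(\varrho^\varepsilon)$, and hence the set $\Xi_\varepsilon$, are all invariant under translations parallel to $\partial\Pi$, since $\mathcal G$ depends only on $|\mathbf x-\mathbf y|$ and $|\mathbf x-\bar{\mathbf y}|$ while $I$ involves only $x_2$. By Lemma \ref{lemm2}, every $\zeta\in\Xi_\varepsilon$ is an $x_1$-translate of some element of $\Xi^0_\varepsilon$, and such a translation, being an isometry, preserves ${\rm diam}(V_\zeta)$. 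Hence it suffices to prove \eqref{ccp2} for $\zeta\in\Xi^0_\varepsilon$.

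Now fix $\zeta\in\Xi^0_\varepsilon$ and take $\mathbf x=(x_1,x_2)\in V_\zeta$ with $|x_1|\geq 1$. On the one hand, \eqref{zyw2} yields the lower bound $\mathcal G\zeta(\mathbf x)\geq -\frac{\kappa}{2\pi}\ln\varepsilon+qx_2-C_3\geq -\frac{\kappa}{2\pi}\ln\varepsilon-C_3$. On the other hand, Lemma \ref{lem3060} yields the upper bound $\mathcal G\zeta(\mathbf x)\leq C_6\varepsilon^{2/p-2}|x_1|^{-1/(2p)}$. After shrinking $\varepsilon_0$ if necessary so that $c_0:=-\frac{\kappa}{2\pi}\ln\varepsilon_0-C_3>0$, the left-hand quantity is bounded below by $c_0>0$ for every $\varepsilon\in(0,\varepsilon_0)$ (note that $-\frac{\kappa}{2\pi}\ln\varepsilon-C_3$ is decreasing in $\varepsilon$). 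Combining the two inequalities and raising to the power $2p$ gives
\[
|x_1|\leq\left(\frac{C_6\,\varepsilon^{2/p-2}}{-\frac{\kappa}{2\pi}\ln\varepsilon-C_3}\right)^{2p}\leq\frac{C_6^{2p}}{c_0^{2p}}\,\varepsilon^{4-4p},
\]
where I used $(2/p-2)\cdot 2p=4-4p$. For points of $V_\zeta$ with $|x_1|<1$ the same bound holds trivially, since $4-4p<0$ forces $\varepsilon^{4-4p}\geq 1$ whenever $\varepsilon\in(0,1)$. Thus $|x_1|\leq C'\varepsilon^{4-4p}$ throughout $V_\zeta$, with $C'=\max\{1,\,C_6^{2p}/c_0^{2p}\}$.

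Finally I would combine this horizontal confinement with the vertical confinement $V_\zeta\subset\mathbb R\times[0,C_5]$ supplied by Lemma \ref{lem305}. Together they trap $V_\zeta$ inside the rectangle $[-C'\varepsilon^{4-4p},\,C'\varepsilon^{4-4p}]\times[0,C_5]$, whose diameter is at most $2C'\varepsilon^{4-4p}+C_5$; using once more that $\varepsilon^{4-4p}\geq 1$ absorbs the constant $C_5$ and yields \eqref{ccp2} with $C_7=2C'+C_5$, which depends only on $\varrho$ and $q$ through $C_3,C_5,C_6$ and $\varepsilon_0$. I do not anticipate a serious obstacle here: the only delicate points are the reduction to the Steiner-symmetric class and verifying that the logarithmic factor $-\frac{\kappa}{2\pi}\ln\varepsilon-C_3$ stays bounded away from zero so that it may be discarded. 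The estimate is deliberately crude—the slow decay rate $|x_1|^{-1/(2p)}$ in Lemma \ref{lem3060} is precisely what produces the large exponent $4-4p$—but it already provides the finite-size starting point needed to bootstrap to the sharp bound ${\rm diam}({\rm supp}(\zeta))\leq C\varepsilon$ in Lemma \ref{lem307}.
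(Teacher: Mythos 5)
Your proof is correct and follows essentially the same route as the paper: reduce to $\Xi^0_\varepsilon$ by horizontal translation invariance, play the lower bound \eqref{zyw2} against the decay estimate of Lemma \ref{lem3060} to confine $V_\zeta$ horizontally to a strip of width $O(\varepsilon^{4-4p})$, and combine with the vertical bound of Lemma \ref{lem305}. The only differences are cosmetic — you keep the factor $c_0^{-2p}$ where the paper normalizes $-\frac{\kappa}{2\pi}\ln\varepsilon_0-C_3\geq 1$, and you explicitly dispose of the case $|x_1|<1$, which the paper leaves implicit.
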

\begin{proof}
Without loss of generality,  we  assume, up to a translation in the $x_1$ direction, that  $\zeta\in\Xi^0_\varepsilon.$ For an arbitrary point $\mathbf x\in V_\zeta$,  by \eqref{zyw2} we have
\[\mathcal G\zeta(\mathbf x)\geq -\frac{\kappa}{2\pi}\ln\varepsilon+qx_2-C_3\geq -\frac{\kappa}{2\pi}\ln \varepsilon-C_3,\]
which together with Lemma \ref{lem3060} yields
\begin{equation}\label{fore}
-\frac{\kappa}{2\pi}\ln\varepsilon-C_3\leq C_6\varepsilon^{2/p-2}|x_1|^{-1/(2p)}.
\end{equation}
Without loss of generality,   assume that   $\varepsilon_0$ is sufficiently small such that
\[-\frac{\kappa}{2\pi}\ln \varepsilon_0-C_3\geq 1.\]
Then
\begin{equation}\label{fort}
C_6\varepsilon^{2/p-2}|x_1|^{-1/(2p)}\geq 1,
\end{equation}
which yields
\begin{equation}\label{entro}
|x_1|\leq C^{2p}_6\varepsilon^{4-4p}.
\end{equation}
Since $\mathbf x\in {\rm supp}(\zeta)$ is arbitrarily chosen, we obtain
\begin{equation}\label{ccp8}
 {\rm supp}(\zeta)\subset [-C^{2p}_6\varepsilon^{4-4p},C^{2p}_6\varepsilon^{4-4p}]\times\mathbb R,\quad\forall \,\zeta\in\Xi_\varepsilon.
 \end{equation}
Then \eqref{ccp2}  follows from \eqref{ccp8} and Lemma \ref{lem305}.\end{proof}

Having made enough preparations, we are ready to deduce a uniform bound for the size of  the vortex core $V_\zeta$

\begin{lemma}\label{lem307}
There exists some $C_8>0,$ depending only on $\varrho$ and $q$, such that
 \begin{equation}\label{ccp5}
 {\rm diam}(V_\zeta)\leq C_8,\quad\forall\,\zeta\in\Xi_\varepsilon. \end{equation}
\end{lemma}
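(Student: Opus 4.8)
The plan is to show that, once $\varepsilon$ is small, almost all of the mass $\kappa$ must sit within a fixed distance of \emph{every} point of the vortex core, and then to convert this into a diameter bound by a disjoint-ball argument. Fix a radius $r_0\in(0,1]$ (say $r_0=1$) and, for $\zeta\in\Xi_\varepsilon$ and a core point $\mathbf x_0=(x_1^0,h)\in V_\zeta$, set $m_0=\int_{B_{r_0}(\mathbf x_0)}\zeta\,d\mathbf y$. The heart of the proof will be the estimate
\[
\mathcal G\zeta(\mathbf x_0)\le -\frac{m_0}{2\pi}\ln\varepsilon+C,
\]
with $C$ depending only on $\varrho$ and $q$. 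Comparing it with the lower bound $\mathcal G\zeta(\mathbf x_0)\ge -\frac{\kappa}{2\pi}\ln\varepsilon-C_3$ from Lemma \ref{lem303} will force $m_0$ to be close to $\kappa$. Since $\zeta$ has total mass $\kappa$, two core points farther apart than $2r_0$ would produce two disjoint balls each carrying more than $\kappa/2$, which is impossible; hence $\mathrm{diam}(V_\zeta)\le 2r_0$.

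To prove the displayed upper bound I would split $\mathcal G\zeta(\mathbf x_0)=\frac{1}{2\pi}\int_\Pi\ln\frac{|\mathbf x_0-\bar{\mathbf y}|}{|\mathbf x_0-\mathbf y|}\zeta\,d\mathbf y$ into the contributions of $B_{r_0}(\mathbf x_0)$ and its complement. For the far part I would use the half-plane identity $|\mathbf x_0-\bar{\mathbf y}|^2=|\mathbf x_0-\mathbf y|^2+4hy_2$ together with $\ln(1+t)\le t$ to obtain $\ln\frac{|\mathbf x_0-\bar{\mathbf y}|}{|\mathbf x_0-\mathbf y|}\le \frac{2hy_2}{|\mathbf x_0-\mathbf y|^2}$; by the vertical confinement $V_\zeta\subset\mathbb R\times[0,C_5]$ of Lemma \ref{lem305}, this is at most $2C_5^2/r_0^2$ on $\{|\mathbf x_0-\mathbf y|>r_0\}$, so the far contribution is bounded by $C_5^2\kappa/(\pi r_0^2)$, uniformly. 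For the near part, using $|\mathbf x_0-\bar{\mathbf y}|\le |\mathbf x_0-\mathbf y|+2y_2\le r_0+2C_5$ I would bound the kernel by $\ln(r_0+2C_5)+\ln\frac{1}{|\mathbf x_0-\mathbf y|}$, the first term giving a bounded contribution $\le \frac{\ln(r_0+2C_5)}{2\pi}\kappa$. Then, applying the rearrangement inequality (Lemma \ref{rri1}) to $\zeta$ restricted to $B_{r_0}(\mathbf x_0)$ and using that this restriction is dominated by $\zeta$, so that its symmetric-decreasing rearrangement is dominated by $(\varrho^\varepsilon)^*=\varepsilon^{-2}\varrho^*(\cdot/\varepsilon)$, a bathtub-principle comparison yields
\[
\frac{1}{2\pi}\int_{B_{r_0}(\mathbf x_0)}\ln\frac{1}{|\mathbf x_0-\mathbf y|}\zeta\,d\mathbf y\le -\frac{m_0}{2\pi}\ln\varepsilon+C,
\]
where $C$ depends only on $\varrho$ and $q$ (through $\int_{B_a}\bigl|\ln|\mathbf w|\bigr|\,\varrho^*\,d\mathbf w$). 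Adding the two parts gives the desired estimate.

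Finally I would combine the two bounds: subtracting them gives $\frac{\kappa-m_0}{2\pi}(-\ln\varepsilon)\le C_3+C$, hence $m_0\ge \kappa-2\pi(C_3+C)/(-\ln\varepsilon)$; choosing $\varepsilon_0$ (depending only on $\varrho,q$) small enough that the right-hand side exceeds $\tfrac23\kappa$ for all $\varepsilon<\varepsilon_0$, I obtain $m_0\ge \tfrac23\kappa$ for every core point. The disjoint-ball argument above then gives $\mathrm{diam}(V_\zeta)\le 2r_0$, yielding the lemma with $C_8=2r_0$. I expect the main obstacle to be the near-field estimate: the crude global rearrangement bound replaces $\zeta$ by its full rearrangement and only produces $-\frac{\kappa}{2\pi}\ln\varepsilon$, which is useless here, so the delicate point is to rearrange only the locally restricted mass and show — via the pointwise domination $(\zeta\chi_{B_{r_0}(\mathbf x_0)})^*\le(\varrho^\varepsilon)^*$ and a bathtub comparison — that the local contribution degenerates like $-\frac{m_0}{2\pi}\ln\varepsilon$ rather than $-\frac{\kappa}{2\pi}\ln\varepsilon$. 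Notably, this route requires only the vertical bound of Lemma \ref{lem305}, and not the rough horizontal bound of Lemma \ref{lem306}.
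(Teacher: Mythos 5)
Your proposal is correct, but it takes a genuinely different route from the paper's. The paper argues by contradiction: assuming $d_n=\mathrm{diam}(V_{\zeta_n})\to+\infty$, it bounds the far-field part of $\mathcal G\zeta_n$ by writing $|\mathbf x-\bar{\mathbf y}|\le d_n+2C_5$, which costs a term $\kappa\ln(d_n+2C_5)$; to absorb this loss it must invoke the rough horizontal bound $d_n\le C_7\varepsilon_n^{4-4p}$ of Lemma \ref{lem306}, and the resulting concentration estimate only places the small fraction $\tfrac{1}{4p-3}\kappa$ of the mass within distance $d_n/N$ of each core point, which is why the paper then needs the pigeonhole argument with $\lceil N/10\rceil$ disjoint vertical strips (and hence the Steiner symmetry of $\zeta_n$). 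You avoid the logarithmic loss altogether by keeping the kernel $\ln\frac{|\mathbf x_0-\bar{\mathbf y}|}{|\mathbf x_0-\mathbf y|}$ intact in the far field and using $|\mathbf x_0-\bar{\mathbf y}|^2=|\mathbf x_0-\mathbf y|^2+4hy_2$ together with the vertical confinement of Lemma \ref{lem305}, so that the far-field kernel is uniformly bounded by $2C_5^2/r_0^2$ on $\{|\mathbf x_0-\mathbf y|>r_0\}$. Your near-field claim is also sound, and in fact needs less than the bathtub principle: since $\bigl(\zeta\chi_{B_{r_0}(\mathbf x_0)}\bigr)^*\le(\varrho^\varepsilon)^*$, one has $\int\ln\frac{\varepsilon}{|\mathbf z|}\bigl(\zeta\chi_{B_{r_0}(\mathbf x_0)}\bigr)^*d\mathbf z\le\int\ln^+\frac{\varepsilon}{|\mathbf z|}(\varrho^\varepsilon)^*d\mathbf z=\int\ln^+\frac{1}{|\mathbf w|}\varrho^*d\mathbf w$, which gives $\int_{B_{r_0}(\mathbf x_0)}\ln\frac{1}{|\mathbf x_0-\mathbf y|}\zeta\,d\mathbf y\le-m_0\ln\varepsilon+C$ directly. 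The upshot is $m_0\ge\kappa-2\pi(C_3+C)/(-\ln\varepsilon)>\tfrac23\kappa$ for small $\varepsilon$, so the elementary two-disjoint-balls argument closes the proof: no contradiction setup, no use of Lemma \ref{lem306}, and no Steiner symmetry. What you give up is nothing essential here; indeed the paper's own Lemma \ref{lem3010} later runs exactly your two-ball argument with $r_0$ replaced by $R\varepsilon$ to sharpen the bound to $C_9\varepsilon$, a refinement your method would deliver by the same substitution once the $O(1)$ bound is established.
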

\begin{proof}

We prove \eqref{ccp5} by contradiction. Assume there exist  $\{\varepsilon_n\}_{n=1}^{+\infty}\subset(0,\varepsilon_0)$ and  $\{\zeta_n\}_{n=1}^{+\infty}\subset\Xi_{{\varepsilon_n}}$ such that
\[d_n:={\rm diam}(V_{\zeta_n})\to+\infty.\]
By Lemma \ref{lem306}, we have 
\begin{equation}\label{rp1i}
d_n\leq C_7\varepsilon_n^{4-4p}.
\end{equation}
Hence it necessarily holds that
\begin{equation}\label{rp2i}
\lim_{n\to+\infty}\varepsilon_n=0.
\end{equation}
Without loss of generality, we assume that   $\zeta_n\in\Xi^0_{\varepsilon_n}$ for all $n$. 
Below we deduce a contradiction.

For any $\mathbf x\in V_{\zeta_n}$, by \eqref{zyw2} it holds that
\[\mathcal G\zeta_n(\mathbf x)\geq -\frac{\kappa}{2\pi}\ln\varepsilon_n-C_3,\]
which together with the fact that $\|\zeta_n\|_{L^1(\Pi)}=\kappa$ gives
\[\int_\Pi\ln\frac{\varepsilon_n}{|\mathbf x-\mathbf y|}\zeta_n(\mathbf y)d\mathbf y\geq -\int_{V_{\zeta_n}}\ln|\mathbf x-\bar{\mathbf y}|\zeta_n(\mathbf y)d\mathbf y-2\pi C_3.\]
Observe that for any $\mathbf x,\mathbf y\in V_{\zeta_n}$  
\[|\mathbf x-\bar{\mathbf y}|\leq |\mathbf x-\mathbf y|+|\mathbf y-\bar{\mathbf y}|\leq d_n+2C_5.
\]
Hence  we get
\begin{equation}\label{rp1}
\int_\Pi\ln\frac{\varepsilon_n}{|\mathbf x-\mathbf y|}\zeta_n(\mathbf y)d\mathbf y\geq -\kappa\ln(d_n+2C_5)-2\pi C_3.
\end{equation}
Divide the  integral in \eqref{rp1} into two parts
\begin{align}\label{youu}
\int_\Pi\ln\frac{\varepsilon_n}{|\mathbf x-\mathbf y|}\zeta_n(\mathbf y)d\mathbf y&= \int_{|\mathbf x-\mathbf y|\leq \frac{d_n}{N}}\ln\frac{\varepsilon_n}{|\mathbf x-\mathbf y|}\zeta_n(\mathbf y)d\mathbf y+
\int_{|\mathbf x-\mathbf y|\geq \frac{d_n}{N}}\ln\frac{\varepsilon_n}{|\mathbf x-\mathbf y|}\zeta_n(\mathbf y)d\mathbf y,
\end{align}
where $N$ is a large positive integer such that
\begin{equation}\label{larn}
N>40p-30.
\end{equation}
Note that $N>50$ since we have assumed $p>2.$
For the first part, it is easy to check by using the rearrangement inequality in Lemma \ref{rri1} (as in \eqref{2003}) that
\begin{equation}\label{rp2}
\int_{|\mathbf x-\mathbf y|\leq \frac{d_n}{N}}\ln\frac{\varepsilon_n}{|\mathbf x-\mathbf y|}\zeta_n(\mathbf y)d\mathbf y\leq \int_{\Pi}\ln\frac{\varepsilon_n}{|\mathbf x-\mathbf y|}\zeta_n(\mathbf y)d\mathbf y\leq \int_{\mathbb R^2}\ln\frac{1}{|\mathbf y|}\varrho^*(\mathbf y)d\mathbf y\leq K_1
\end{equation}
for some  positive constant $K_1$ depending only on $\varrho$.
For the second part, we have
\begin{equation}\label{rp3}
\int_{|\mathbf x-\mathbf y|\geq \frac{d_n}{N}}\ln\frac{\varepsilon_n}{|\mathbf x-\mathbf y|}\zeta_n(\mathbf y)d\mathbf y\leq \ln\frac{N\varepsilon_n}{d_n}\int_{|\mathbf x-\mathbf y|\geq \frac{d_n}{N}}\zeta_n(\mathbf y)d\mathbf y.
\end{equation}
Combining \eqref{rp1}, \eqref{rp2} and \eqref{rp3} we get
\begin{equation}\label{nbs3}
\ln\frac{N\varepsilon_n}{d_n}\int_{|\mathbf x-\mathbf y|\geq \frac{d_n}{N}}\zeta_n(\mathbf y)d\mathbf y\geq -\kappa\ln(d_n+2C_5)-2\pi C_3-K_1.
\end{equation}
Since we have assumed $d_n\to+\infty$ as $n\to+\infty,$ below we can assume that  $n$ is sufficiently large such that 
\[N\varepsilon_n<d_n,\quad d_n\geq 2C_5.\]
Then from \eqref{nbs3} we have 
\begin{equation}\label{dpt1}
\begin{split}
\int_{|\mathbf x-\mathbf y|\geq \frac{d_n}{N}}\zeta_n(\mathbf y)d\mathbf y&\leq \frac{\kappa\ln(d_n+2C_5)+2\pi C_3+K_1}{\ln d_n-\ln\varepsilon_n-\ln N}\\
&\leq \frac{\kappa\ln d_n+\kappa\ln2+2\pi C_3+K_1}{\ln d_n-\ln\varepsilon_n-\ln N}\\
&\leq  \frac{\kappa\ln d_n+K_2}{\ln d_n-\ln\varepsilon_n-K_2},
\end{split}
\end{equation}
where $K_2=\kappa\ln2+2\pi C_3+K_1+\ln N.$
To proceed, notice that for  sufficiently large $n$ such that $\varepsilon_n<e^{-K_2(1+\kappa^{-1})},$
the function
\[f(s)=\frac{\kappa s+K_2}{s-\ln\varepsilon_n-K_2}\]
is increasing for $s\in(0,+\infty).$ Taking into account \eqref{rp1i}, we get
\begin{equation}\label{dpt2}
\frac{\ln d_n+K_2}{\ln d_n-\ln\varepsilon_n-K_2}\leq \frac{\kappa\ln \left(C_7\varepsilon_n^{4-4p}\right)+K_2}{\ln \left(C_7\varepsilon_n^{4-4p}\right)-\ln\varepsilon_n-K_2}\to \frac{4p-4}{4p-3}\kappa
\end{equation}
as $n\to+\infty.$  Note that we used   \eqref{rp2i} when passing to the limit  $n\to+\infty$ in \eqref{dpt2}. Therefore we get from \eqref{dpt1} and \eqref{dpt2} that for sufficiently large $n$
\[\int_{|\mathbf x-\mathbf y|\geq \frac{d_n}{N}}\zeta_n(\mathbf y)d\mathbf y<  \frac{4p-4}{4p-3}\kappa,\]
or equivalently,
\begin{equation}\label{moned}
\int_{|\mathbf x-\mathbf y|\leq \frac{d_n}{N}}\zeta_n(\mathbf y)d\mathbf y> \frac{1}{4p-3}\kappa.
\end{equation}
Recalling \eqref{larn}, we get 
\begin{equation}\label{mone}
\int_{|\mathbf x-\mathbf y|\leq \frac{d_n}{N}}\zeta_n(\mathbf y)d\mathbf y> \frac{10}{N}\kappa,
\end{equation}
provided that $n$ is large enough.

Based on \eqref{mone}, we can deduce a contradiction. To this end, 
denote
\[e_n:=\max_{\mathbf x\in {V_{\zeta_n}}}x_1.\]
Then it is easy to see that
\[ d_n\leq 2e_n+C_5.\]
Hence
\begin{equation*} 
\int_{|\mathbf x-\mathbf y|\leq \frac{d_n}{N}}\zeta_n(\mathbf y)d\mathbf y\leq \int_{|x_1-y_1|\leq \frac{d_n}{N}}\zeta_n(\mathbf y)d\mathbf y\leq \int_{|x_1-y_1|\leq \frac{2e_n+C_5}{N}}\zeta_n(\mathbf y)d\mathbf y\leq \int_{|x_1-y_1|\leq \frac{4e_n}{N}}\zeta_n(\mathbf y)d\mathbf y,
\end{equation*}
provided that $n$ is large enough (such that $e_n\geq C_5/2$).
This together with \eqref{mone} yields for sufficiently large $n$
\begin{equation}\label{mmki}
\int_{|x_1-y_1|\leq \frac{4e_n}{N}}\zeta_n(\mathbf y)d\mathbf y>\frac{10}{N}\kappa,\quad\forall\, \mathbf x\in V_{\zeta_n}.
\end{equation}
Below fix a large $n$ such that \eqref{mmki} holds.
Since $\zeta_n$ is Steiner-symmetric in the $x_2$ axis (note that we have assumed    $\zeta_n\in\Xi^0_{\varepsilon_n}$), we have 
\begin{equation}\label{con11}
\int_{|s-y_1|\leq \frac{4e_n}{N}}\zeta_n(\mathbf y)d\mathbf y>\frac{10}{N}\kappa,\quad\forall\, s\in(-e_n,e_n).
\end{equation}
Define 
\[s_k=\frac{4(2k-1)e_n}{N},\quad k=1,\cdot\cdot\cdot,{\left\lceil\frac{N}{10}\right\rceil}.\]
Here  $\lceil a\rceil$ is the smallest integer not less than $a$ for any $a\in\mathbb R$. Then it is easy to check that for any $1\leq k\leq {\left\lceil\frac{N}{10}\right\rceil},$
\[ 0< s_k\leq \frac{4\left(2{\left\lceil\frac{N}{10}\right\rceil}-1\right)e_n}{N}\leq \frac{4\left(2\left(\frac{N}{10}+1\right)-1\right)e_n}{N}=\left(\frac{4}{5}+\frac{4}{N}\right)e_n<e_n.\]
Here we used the fact that $N>50.$
Therefore by \eqref{con11} we get
\begin{equation}\label{con11}
\int_{|s_k-y_1|\leq \frac{4e_n}{N}}\zeta_n(\mathbf y)d\mathbf y>\frac{10}{N}\kappa,\quad k=1,\cdot\cdot\cdot,{\left\lceil\frac{N}{10}\right\rceil}.
\end{equation}
Summing over $k=1,\cdot\cdot\cdot,{\left\lceil\frac{N}{10}\right\rceil},$ we obtain
\begin{equation}\label{coniii}
\int_\Pi\zeta_n(\mathbf y)d\mathbf y\geq \sum_{k=1}^{{\left\lceil\frac{N}{10}\right\rceil}}\int_{|y_1-s_k|< \frac{4e_n}{N}}\zeta_n(\mathbf y)d\mathbf y> {\left\lceil\frac{N}{10}\right\rceil}\frac{10}{N}\kappa\geq\kappa.
\end{equation}
Note that in the first inequality in \eqref{coniii} we used the following fact  
\[\left\{\mathbf y\in\Pi\mid |y_1-s_j|< \frac{4e_n}{N}\right\}\cap \left\{\mathbf y\in\Pi\mid |y_1-s_k|< \frac{4e_n}{N}\right\}=\varnothing, \quad \forall \,1\leq j\neq k\leq  {\left\lceil\frac{N}{10}\right\rceil}.\]
Obviously \eqref{coniii}  contradicts the fact that $\|\zeta_n\|_{L^1(\Pi)}=\kappa.$ The proof is finished.

\end{proof}

With Lemma \ref{lem307} at hand, we can easily deduce a better bound for the   size of the vortex core $V_{\zeta}$.
\begin{lemma}\label{lem3010}
There exists some $C_9>0,$ depending only on $\varrho$ and $q$, such that
 \begin{equation}\label{ccp10}
 {\rm diam}(V_\zeta)\leq C_9\varepsilon,\quad\forall\,\zeta\in\Xi_\varepsilon. \end{equation}
\end{lemma}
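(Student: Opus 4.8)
The plan is to argue by contradiction, rerunning the concentration computation from the proof of Lemma \ref{lem307}, but now feeding in the uniform diameter bound $C_8$ just established. Suppose \eqref{ccp10} fails for every constant; then there exist sequences $\varepsilon_n\in(0,\varepsilon_0)$ and $\zeta_n\in\Xi_{\varepsilon_n}$ with $d_n:={\rm diam}(V_{\zeta_n})$ satisfying $d_n/\varepsilon_n\to+\infty$. After a translation in the $x_1$ direction I may assume $\zeta_n\in\Xi^0_{\varepsilon_n}$. Since $d_n\le C_8$ by Lemma \ref{lem307}, the relation $\varepsilon_n=d_n(\varepsilon_n/d_n)\le C_8(\varepsilon_n/d_n)$ forces $\varepsilon_n\to0^+$, so all the earlier estimates remain at my disposal.

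Next I reproduce the lower bound for the logarithmic potential. Fixing $\mathbf x\in V_{\zeta_n}$ and using \eqref{zyw2} together with $qx_2\ge0$ and $\|\zeta_n\|_{L^1(\Pi)}=\kappa$, exactly as in the derivation of \eqref{rp1}, I obtain
\[\int_\Pi\ln\frac{\varepsilon_n}{|\mathbf x-\mathbf y|}\zeta_n(\mathbf y)\,d\mathbf y\ge-\kappa\ln(d_n+2C_5)-2\pi C_3.\]
The crucial improvement is that, because $d_n\le C_8$ now, the right-hand side is bounded below by a constant $-K_3$ depending only on $\varrho$ and $q$, rather than by a quantity that degenerates with $d_n$. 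Splitting the integral at radius $d_n/N$ as in \eqref{youu}, bounding the inner part by $K_1$ via the rearrangement inequality (Lemma \ref{rri1}) as in \eqref{rp2}, and estimating the outer part by $\ln(N\varepsilon_n/d_n)\int_{|\mathbf x-\mathbf y|\ge d_n/N}\zeta_n$, I arrive at
\[\int_{|\mathbf x-\mathbf y|\ge d_n/N}\zeta_n(\mathbf y)\,d\mathbf y\le\frac{K_1+K_3}{\ln(d_n/\varepsilon_n)-\ln N}.\]

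Here lies the whole point of the refinement, and the one genuinely new step to verify carefully: in Lemma \ref{lem307} the numerator grew like $\kappa\ln d_n$ and only produced a ratio strictly below $\kappa$, whereas now the numerator is a fixed constant while the denominator tends to $+\infty$, since $d_n/\varepsilon_n\to+\infty$. Hence $\int_{|\mathbf x-\mathbf y|\ge d_n/N}\zeta_n\to0$ uniformly in $\mathbf x\in V_{\zeta_n}$, so that for a fixed $N>50$ and all sufficiently large $n$,
\[\int_{|\mathbf x-\mathbf y|\le d_n/N}\zeta_n(\mathbf y)\,d\mathbf y>\frac{10}{N}\kappa,\qquad\forall\,\mathbf x\in V_{\zeta_n}.\]
From this point the argument is verbatim that of Lemma \ref{lem307}: setting $e_n=\max_{\mathbf x\in V_{\zeta_n}}x_1$ and using $d_n\le 2e_n+C_5$ to pass to the slab $\{|x_1-y_1|\le 4e_n/N\}$, the Steiner symmetry of $\zeta_n$ upgrades the bound to all $s\in(-e_n,e_n)$, and summing over $\lceil N/10\rceil$ pairwise disjoint such slabs forces $\|\zeta_n\|_{L^1(\Pi)}>\kappa$, a contradiction. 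Everything apart from converting the previous borderline estimate into a vanishing one is a transcription of the proof of Lemma \ref{lem307}.
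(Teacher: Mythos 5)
Your reduction to the contradiction hypothesis $d_n/\varepsilon_n\to+\infty$, the deduction $\varepsilon_n\to0^+$ from $d_n\le C_8$, and the key quantitative improvement are all correct: since $|\mathbf x-\bar{\mathbf y}|\le C_8+2C_5$ on $V_{\zeta_n}\times V_{\zeta_n}$, the lower bound \eqref{rp1} becomes a fixed constant $-K_3$, and splitting at radius $d_n/N$ does give
$\int_{|\mathbf x-\mathbf y|\ge d_n/N}\zeta_n\le (K_1+K_3)/(\ln(d_n/\varepsilon_n)-\ln N)\to0$ uniformly in $\mathbf x\in V_{\zeta_n}$. The gap is in the endgame. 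You close by transcribing the slab argument of Lemma \ref{lem307} ``verbatim,'' but that argument uses the inequality $\int_{|x_1-y_1|\le(2e_n+C_5)/N}\zeta_n\le\int_{|x_1-y_1|\le 4e_n/N}\zeta_n$, which requires $e_n\ge C_5/2$. In Lemma \ref{lem307} this holds because there $d_n\to+\infty$, hence $e_n\to+\infty$. In your setting the contradiction hypothesis only gives $d_n/\varepsilon_n\to+\infty$ while $d_n\le C_8$; nothing prevents $d_n\to0$ (e.g.\ $d_n\sim\sqrt{\varepsilon_n}$), in which case $e_n\le d_n/2\to0$ and the passage to the slab of half-width $4e_n/N$ fails. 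So the final step, as written, does not go through.

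The fix is immediate and shows the slab machinery is not needed: your estimate is strictly stronger than the one in Lemma \ref{lem307}, since the outer mass tends to $0$ rather than merely staying below $\kappa$. Hence for large $n$ every $\mathbf x\in V_{\zeta_n}$ satisfies $\int_{|\mathbf x-\mathbf y|\le d_n/N}\zeta_n>\tfrac23\kappa$; picking $\mathbf x_1,\mathbf x_2\in V_{\zeta_n}$ with $|\mathbf x_1-\mathbf x_2|>2d_n/N$ (possible since $N>4$ and ${\rm diam}(V_{\zeta_n})=d_n$) gives two disjoint balls each carrying more than $\tfrac23\kappa$ of the mass, contradicting $\|\zeta_n\|_{L^1(\Pi)}=\kappa$. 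This is exactly how the paper concludes; its proof is the non-sequential version of your argument, splitting at radius $R\varepsilon$ for a large constant $R$ chosen so that the outer mass is at most $\kappa/3$, and then invoking the same two-disjoint-balls observation to get ${\rm diam}(V_\zeta)\le 2R\varepsilon$.
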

\begin{proof}
Fix $\zeta\in\Xi_\varepsilon$. For any $\mathbf x\in V_\zeta,$  by \eqref{zyw2}  we have
\[\mathcal G\zeta (\mathbf x)\geq -\frac{\kappa}{2\pi}\ln\varepsilon -C_3,\]
or equivalently
\begin{equation}\label{twis}
\int_\Pi\ln\frac{\varepsilon }{|\mathbf x-\mathbf y|}\zeta(\mathbf y)d\mathbf y\geq -\int_{V_\zeta}\ln|\mathbf x-\bar{\mathbf y}|\zeta(\mathbf y)d\mathbf y-2\pi C_3.
\end{equation}
Notice by Lemma \ref{lem305} and Lemma \ref{lem307} that
\begin{equation}\label{twis2}
|\mathbf x-\bar{\mathbf y}|\leq |\mathbf x-\mathbf y|+|\mathbf y-\bar{\mathbf y}|\leq C_8+2C_5,\quad\forall\,\mathbf x,\mathbf y\in V_\zeta.
\end{equation}
Therefore we get
\begin{equation}\label{twis3}
\int_\Pi\ln\frac{\varepsilon }{|\mathbf x-\mathbf y|}\zeta(\mathbf y)d\mathbf y\geq -\kappa\ln (C_8+2C_5)-2\pi C_3.
\end{equation}
To proceed, we divide the integral in \eqref{twis3} into two parts
\begin{align*}
\int_\Pi\ln\frac{\varepsilon }{|\mathbf x-\mathbf y|}\zeta(\mathbf y)d\mathbf y&=\int_{|\mathbf x-\mathbf y|\leq R\varepsilon}\ln\frac{\varepsilon }{|\mathbf x-\mathbf y|}\zeta(\mathbf y)d\mathbf y+\int_{|\mathbf x-\mathbf y|\geq R\varepsilon}\ln\frac{\varepsilon }{|\mathbf x-\mathbf y|}\zeta(\mathbf y)d\mathbf y,
\end{align*}
where $R>1$ is a positive number to be determined later.
As in \eqref{rp2}, we can use the rearrangement inequality in Lemma \ref{rri1} to estimate the first integral  to get
\[\int_{|\mathbf x-\mathbf y|\leq R\varepsilon}\ln\frac{\varepsilon }{|\mathbf x-\mathbf y|}\zeta(\mathbf y)d\mathbf y\leq K_2\]
for some positive constant $K_2$ depending only on $\varrho$. For the second integral, we have
\begin{align*}
\int_{|\mathbf x-\mathbf y|\geq R\varepsilon}\ln\frac{\varepsilon }{|\mathbf x-\mathbf y|}\zeta(\mathbf y)d\mathbf y&\leq -\ln R\int_{|\mathbf x-\mathbf y|\geq R\varepsilon}\zeta(\mathbf y)d\mathbf y.
\end{align*}
Hence we obtain
\[\int_{|\mathbf x-\mathbf y|\geq R\varepsilon}\zeta(\mathbf y)d\mathbf y\leq \frac{\kappa\ln(C_8+2C_5)+2\pi C_3+K_2}{\ln R}.\]
By choosing $R$ sufficiently large, depending only on $\varrho$ and $q$, such that
\[\frac{\kappa\ln(C_8+2C_5)+2\pi C_3+K_2}{\ln R}\leq \frac{\kappa}{3},\]
 we get
\[\int_{|\mathbf x-\mathbf y|\geq R\varepsilon}\zeta(\mathbf y)d\mathbf y\leq \frac{\kappa}{3}.\]
Therefore
\begin{equation}\label{havv}
\int_{|\mathbf x-\mathbf y|\leq R\varepsilon}\zeta(\mathbf y)d\mathbf y\geq \frac{2}{3}\kappa.
\end{equation}
Since \eqref{havv} holds true for any $\mathbf x\in V_\zeta$, we have
\begin{equation}\label{havv}
{\rm diam}(V_\zeta)\leq 2R\varepsilon.
\end{equation}
In fact, suppose otherwise ${\rm diam}(V_\zeta)>2 R\varepsilon.$ Then there exist two points $\mathbf x_1,\mathbf x_2\in V_\zeta$ such that $|\mathbf x_1-\mathbf x_2|>2R\varepsilon.$ It is obvious that  $B_{R\varepsilon}(\mathbf x_1)\cap B_{R\varepsilon}(\mathbf x_2)=\varnothing$, therefore
\[\int_\Pi\zeta(\mathbf y)d\mathbf y\geq \int_{|\mathbf x_1-\mathbf y|\leq R\varepsilon}\zeta(\mathbf y)d\mathbf y+\int_{|\mathbf x_2-\mathbf y|\leq R\varepsilon}\zeta(\mathbf y)d\mathbf y\geq \frac{4}{3}\kappa,\]
  contradicting  the fact that $\|\zeta\|_{L^1(\Pi)}=\kappa.$
The desired result follows by choosing $C_9=2R.$
\end{proof}

As in Theorem \ref{thm1},  for any $\zeta\in\Xi^0_\varepsilon$  we define
\[\mathbf x^{\zeta,\varepsilon}= \frac{1}{\kappa}\int_\Pi \mathbf x\zeta(\mathbf x)d\mathbf x.\]
By symmetry, it is clear that  $ x_1^{\zeta,\varepsilon}\equiv 0$.

\begin{lemma}\label{yayaya}
For any $\zeta\in\Xi^0_\varepsilon,$ it holds that
\begin{equation}\label{yayaya1}
V_\zeta\subset   \overline{B_{C_9\varepsilon}(\mathbf x^{\zeta,\varepsilon}) },
\end{equation}
where $C_9$ is determined by \eqref{ccp10}. 
\end{lemma}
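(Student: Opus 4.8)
The plan is to obtain \eqref{yayaya1} as an immediate consequence of the uniform diameter estimate in Lemma \ref{lem3010}, by exploiting the fact that $\mathbf x^{\zeta,\varepsilon}$ is nothing but the $\zeta$-weighted barycenter of the vortex core $V_\zeta$. First I would record that, since $\zeta\in\mathcal R(\varrho^\varepsilon)$, equimeasurability together with \eqref{sc10} (taken with $s=1$) gives $\|\zeta\|_{L^1(\Pi)}=\|\varrho^\varepsilon\|_{L^1(\Pi)}=\kappa$, exactly as already used in the proof of Lemma \ref{lem307}. Hence $\kappa^{-1}\zeta\,d\mathbf y$ is a probability measure concentrated on $V_\zeta$, and $\mathbf x^{\zeta,\varepsilon}=\kappa^{-1}\int_\Pi\mathbf y\,\zeta(\mathbf y)\,d\mathbf y$ is a genuine average of points of $V_\zeta$ against this measure.

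Next I would fix an arbitrary $\mathbf x\in V_\zeta$ and write the displacement from the barycenter as a single integral, using $\kappa^{-1}\int_\Pi\zeta\,d\mathbf y=1$:
\[\mathbf x-\mathbf x^{\zeta,\varepsilon}=\frac{1}{\kappa}\int_\Pi(\mathbf x-\mathbf y)\,\zeta(\mathbf y)\,d\mathbf y.\]
Taking the Euclidean norm and moving it under the integral sign (the triangle inequality for vector-valued integrals, i.e. Jensen's inequality) yields
\[|\mathbf x-\mathbf x^{\zeta,\varepsilon}|\leq\frac{1}{\kappa}\int_{V_\zeta}|\mathbf x-\mathbf y|\,\zeta(\mathbf y)\,d\mathbf y,\]
where the domain of integration is restricted to $V_\zeta$ because $\zeta$ vanishes a.e. outside it.

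Finally, since both $\mathbf x$ and $\mathbf y$ range over $V_\zeta$ throughout the integral, I would bound $|\mathbf x-\mathbf y|\leq{\rm diam}(V_\zeta)$ pointwise, pull this constant out of the integral, and use $\kappa^{-1}\int_{V_\zeta}\zeta\,d\mathbf y=1$ to conclude $|\mathbf x-\mathbf x^{\zeta,\varepsilon}|\leq{\rm diam}(V_\zeta)$; invoking Lemma \ref{lem3010} then gives $|\mathbf x-\mathbf x^{\zeta,\varepsilon}|\leq C_9\varepsilon$. As $\mathbf x\in V_\zeta$ was arbitrary, this is precisely the inclusion \eqref{yayaya1}. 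I do not anticipate any genuine obstacle: the statement is the elementary fact that a nonnegative mass distribution whose support has diameter at most $C_9\varepsilon$ is contained in the closed ball of radius $C_9\varepsilon$ centered at its own center of mass, and the only nontrivial input is the diameter bound already established in Lemma \ref{lem3010}. It is worth noting that the Steiner-symmetry hypothesis $\zeta\in\Xi^0_\varepsilon$ plays no role in the inequality itself—it only serves to fix $x_1^{\zeta,\varepsilon}=0$—but I retain it for consistency with the surrounding results.
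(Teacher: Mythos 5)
Your argument is correct and is essentially identical to the paper's own proof: both write $\mathbf x-\mathbf x^{\zeta,\varepsilon}$ as the $\zeta$-weighted average of $\mathbf x-\mathbf y$ over $V_\zeta$ and bound it by ${\rm diam}(V_\zeta)\le C_9\varepsilon$ via Lemma \ref{lem3010}. No issues.
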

\begin{proof}

For any $\mathbf z\in V_\zeta,$ by Lemma \ref{lem3010} we have
\[|\mathbf z-\mathbf x^{\zeta,\varepsilon}|=\left|\frac{1}{\kappa}\int_{V_\zeta}(\mathbf z-\mathbf x)\zeta(\mathbf x)d\mathbf x\right|\leq C_9\varepsilon,\]
which yields
\[\{\mathbf x\in\Pi\mid \zeta(\mathbf x)>0\}\subset \overline{B_{C_9\varepsilon}(\mathbf x^{\zeta,\varepsilon})}. \]
Hence \eqref{yayaya1} follows immediately.
\end{proof}
The following lemma is about the limiting position of $\mathbf x^{\zeta,\varepsilon}.$
\begin{lemma}\label{lem3020}
For any $\epsilon>0,$ there exists some  $\varepsilon_1\in(0,\varepsilon_0),$ such that for any $\varepsilon\in(0,\varepsilon_1),$ it holds that
\[|\mathbf x^{\zeta,\varepsilon}-\hat{\mathbf x}|<\epsilon,\quad\forall\,\zeta\in\Xi^0_\varepsilon,\]
where $\hat{\mathbf x}=(0,\kappa/(4\pi q))$ as in Theorem \ref{thm1}.

\end{lemma}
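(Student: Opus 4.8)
The plan is to compute $(E-qI)(\zeta)$ asymptotically using the strong concentration established above and to read off the optimal height of the vortex core. Since $\zeta\in\Xi^0_\varepsilon$ is even in $x_1$ we have $x_1^{\zeta,\varepsilon}=0$, so it suffices to show $x_2^{\zeta,\varepsilon}\to\kappa/(4\pi q)$ uniformly. Write $h=x_2^{\zeta,\varepsilon}$, so $\mathbf x^{\zeta,\varepsilon}=(0,h)$, and set $\hat x_2=\kappa/(4\pi q)$. By Lemma~\ref{yayaya} we have $V_\zeta\subset\overline{B_{C_9\varepsilon}(\mathbf x^{\zeta,\varepsilon})}$, so after the scaling $\mathbf y=\varepsilon\mathbf y'+\mathbf x^{\zeta,\varepsilon}$ the function $\nu:=\nu^{\zeta,\varepsilon}$ lies in $\mathcal R(\varrho)$ with support in $\overline{B_{C_9}(\mathbf 0)}$. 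Splitting the Green kernel into its Newtonian part and its image part, the self-interaction scales as $-\frac{1}{4\pi}\int_\Pi\int_\Pi\ln|\mathbf x-\mathbf y|\,\zeta\zeta=-\frac{\kappa^2}{4\pi}\ln\varepsilon+W(\nu)$, where $W(\nu):=\frac{1}{4\pi}\int\int\ln\frac{1}{|\mathbf x-\mathbf y|}\nu\nu$, while for the image part $|\mathbf x-\bar{\mathbf y}|=2h+O(\varepsilon)$ uniformly for $\mathbf x,\mathbf y\in V_\zeta$. Together with $I(\zeta)=\kappa h$ this yields
\[(E-qI)(\zeta)=-\frac{\kappa^2}{4\pi}\ln\varepsilon+W(\nu)+g(h)+O(\varepsilon),\qquad g(t):=\frac{\kappa^2}{4\pi}\ln(2t)-q\kappa t,\]
with the $O(\varepsilon)$ uniform in $\zeta$.

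Before invoking this expansion I would first confine $h$ to a fixed compact subinterval of $(0,+\infty)$. The upper bound $h\le C_5$ is immediate from Lemma~\ref{lem305}. For the lower bound, the crude estimate $|\mathbf x-\bar{\mathbf y}|\le 2h+4C_9\varepsilon$ on $V_\zeta$ together with $W(\nu)\le W(\varrho^*)$ gives $\frac{\kappa^2}{4\pi}\ln(2h+4C_9\varepsilon)-q\kappa h\ge (E-qI)(\zeta)+\frac{\kappa^2}{4\pi}\ln\varepsilon-W(\varrho^*)$, and the lower energy bound of Lemma~\ref{lem301} forces $2h+4C_9\varepsilon$ to stay above a fixed positive constant; hence $h\ge\delta_0>0$ once $\varepsilon$ is small. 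This step is exactly what makes the logarithm $\ln(2h)$ harmless and legitimizes the uniform $O(\varepsilon)$ error in the expansion, so no circularity arises.

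With $h\in[\delta_0,C_5]$ I would compare the maximizer against the explicit competitor $v(\mathbf x)=\varepsilon^{-2}\varrho^*\!\big((\mathbf x-\hat{\mathbf x})/\varepsilon\big)\in\mathcal R(\varrho^\varepsilon)$ centred at $\hat{\mathbf x}=(0,\hat x_2)$, whose rescaled profile is exactly $\varrho^*$, so that $(E-qI)(v)=-\frac{\kappa^2}{4\pi}\ln\varepsilon+W(\varrho^*)+g(\hat x_2)+O(\varepsilon)$. Since $T_\varepsilon=(E-qI)(\zeta)\ge (E-qI)(v)$ and $W(\nu)\le W(\varrho^*)$ (the rearrangement inequality for the logarithmic kernel, obtained by localizing to the common support ball and applying Lemma~\ref{rri2} to the nonnegative radially decreasing kernel $\big(\ln(M/|\cdot|)\big)^+$ with $M$ larger than the common diameter), we obtain $g(h)\ge g(\hat x_2)+O(\varepsilon)$. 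As $g$ has its unique critical point, a strict maximum, at $t=\hat x_2=\kappa/(4\pi q)$, and trivially $g(h)\le g(\hat x_2)$, this forces $g(h)\to g(\hat x_2)$ uniformly in $\zeta$. Because $g$ is continuous with a strict maximum at $\hat x_2$ and $h$ ranges over the fixed compact set $[\delta_0,C_5]$, uniform convergence of $g(h)$ to the maximal value upgrades to $h\to\hat x_2$ uniformly; concretely, if $|h-\hat x_2|\ge\epsilon_*$ held along some $\varepsilon_n\to0$ and $\zeta_n\in\Xi^0_{\varepsilon_n}$, one extracts $h\to t^*\neq\hat x_2$ with $t^*\in[\delta_0,C_5]$ and deduces $g(t^*)=g(\hat x_2)$, contradicting uniqueness. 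This delivers the required $\varepsilon_1$.

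The main obstacle is the energy expansion itself: everything hinges on showing that the image interaction contributes precisely $\frac{\kappa^2}{4\pi}\ln(2h)+o(1)$ and the self interaction $-\frac{\kappa^2}{4\pi}\ln\varepsilon+W(\nu)+o(1)$, with errors \emph{uniform} in $\zeta$. This is possible only because of the uniform diameter bound ${\rm diam}(V_\zeta)\le C_9\varepsilon$ from Lemma~\ref{lem3010} and the uniform height bound from Lemma~\ref{lem305}; the genuinely delicate point is securing the lower bound $h\ge\delta_0$ without already knowing the limiting height, which I handle through the crude image estimate combined with Lemma~\ref{lem301}.
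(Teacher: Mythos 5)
Your proposal is correct and follows essentially the same route as the paper: compare the maximizer against the competitor $\varepsilon^{-2}\varrho^*((\mathbf x-\hat{\mathbf x})/\varepsilon)$, discard the Newtonian self-interaction via the Riesz rearrangement inequality, and conclude from the strict maximality of $g(t)=\frac{\kappa^2}{4\pi}\ln(2t)-q\kappa t$ at $t=\kappa/(4\pi q)$. The only differences are cosmetic: you establish an a priori lower bound $h\ge\delta_0$ (the paper instead handles $\tilde x_2=0$ via the convention $g(0)=-\infty$ in a subsequence argument), and you are more explicit than the paper about truncating the logarithmic kernel before applying Lemma \ref{rri2}.
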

\begin{proof}
Fix a sequence  $\{\varepsilon_n\}_{n=1}^{+\infty}\subset(0,\varepsilon_0)$ such that   $\varepsilon_n\to0^+$ as $n\to+\infty$, and a sequence  $\{\zeta_n\}_{n=1}^{+\infty}, $ $\zeta_n\in  \Xi_{\varepsilon_n}^0$. Denote 
\[\mathbf x^n=\frac{1}{\kappa}\int_\Pi \mathbf x\zeta^n(\mathbf x)d\mathbf x.\]
It suffices to show that $\mathbf x^n\to\hat{\mathbf x}$ as $n\to+\infty.$

First by symmetry we have  $x^n_1\equiv 0, \forall\,n.$ Moreover, by Lemma \ref{lem305} we have  $0\leq x^n_2\leq C_5.$ 
Without loss of generality, we assume that 
\begin{equation}\label{789}
C_5>\frac{\kappa}{4\pi q}.
\end{equation}
Up to a subsequence, we assume that $\mathbf x^n\to \tilde{\mathbf x}$ as $n\to+\infty.$ Hence $\tilde x_1=0,0\leq \tilde x_2\leq C_5.$

Define 
\begin{equation}\label{defov}
v_n(\mathbf x)= \frac{1}{\varepsilon_n^2}\rho^*\left(\frac{\mathbf x-\hat{ \mathbf x}}{\varepsilon_n}\right),
\end{equation}
then $v_n\in \mathcal R(\varrho^{\varepsilon_n})$ if $n$ is sufficiently large, which implies
\[(E-qI)(\zeta_n)\geq (E-qI)(v_n).\]
For $(E-qI)(\zeta_n),$ we have
\begin{equation}\label{misu}
(E-qI)(\zeta_n)=\frac{1}{4\pi}\int_\Pi\int_\Pi\ln\frac{|\mathbf x-\bar{\mathbf y}|}{|\mathbf x-\mathbf y|}\zeta_n(\mathbf x)\zeta_n(\mathbf y)d\mathbf xd\mathbf y-q\int_\Pi x_2\zeta_n(\mathbf x)d\mathbf x.
\end{equation}
For $(E-qI)(v_n),$ we have
\begin{equation}\label{misu}
(E-qI)(v_n)=\frac{1}{4\pi}\int_\Pi\int_\Pi\ln\frac{|\mathbf x-\bar{\mathbf y}|}{|\mathbf x-\mathbf y|} v_n(\mathbf x)v_n(\mathbf y)d\mathbf xd\mathbf y-q\int_\Pi x_2v_n(\mathbf x)d\mathbf x.
\end{equation}
By the rearrangement inequality in Lemma \ref{rri2} we have
\[\frac{1}{4\pi}\int_\Pi\int_\Pi\ln\frac{1}{|\mathbf x-\mathbf y|}\zeta_n(\mathbf x)\zeta_n(\mathbf y)d\mathbf xd\mathbf y\leq \frac{1}{4\pi}\int_\Pi\int_\Pi\ln\frac{1}{|\mathbf x-\mathbf y|}v_n(\mathbf x)v_n(\mathbf y)d\mathbf xd\mathbf y,\]
hence we get
\begin{align*}&\frac{1}{4\pi}\int_\Pi\int_\Pi\ln{|\mathbf x-\bar{\mathbf y}|}\zeta_n(\mathbf x)\zeta_n(\mathbf y)d\mathbf xd\mathbf y-q\int_\Pi x_2\zeta_n(\mathbf x)d\mathbf x\\
\geq &\frac{1}{4\pi}\int_\Pi\int_\Pi\ln{|\mathbf x-\bar{\mathbf y}|}v_n(\mathbf x)v_n(\mathbf y)d\mathbf xd\mathbf y-q\int_\Pi x_2v_n(\mathbf x)d\mathbf x.
\end{align*}
Using Lemma \ref{yayaya} and passing to  the limit $n\to+\infty$, 
 we get
\[\frac{\kappa^2}{4\pi}\ln(2\tilde x_2)-\kappa q\tilde x_2\geq \frac{\kappa^2}{4\pi}\ln(2  \hat x_2)-\kappa q \hat x_2.\]
Here we used the convention $\frac{\kappa^2}{4\pi}\ln(2\tilde x_2)-\kappa q\tilde x_2=-\infty $ if $\tilde x_2=0.$
This means that $\tilde x_2$ is a maximum point of the function
\[h(s)=\frac{\kappa^2}{4\pi}\ln(2s)-\kappa qs\]
on $[0,C_5]$. On the other hand, it is easy to check that since $\hat x_2=\kappa/(4\pi q)$ is the unique maximum point of $h$ on $[0,+\infty).$ Taking into account \eqref{789},  it must hold $\tilde x_2=\hat x_2$.  This finishes the proof.
\end{proof}

As in Theorem \ref{thm1},  for any $\zeta\in\Xi^0_\varepsilon$ we extend it to $\mathbb R^2$ by setting $\zeta\equiv0$ in the lower half-plane. Denote 
\begin{equation}\label{ggla}
\nu^{\zeta,\varepsilon}(\mathbf x)=\varepsilon^2\zeta(\varepsilon\mathbf x+\mathbf x^{\zeta,\varepsilon}).
\end{equation}

\begin{lemma}\label{lem3030}
For any  $\epsilon>0,$ there exists some  $\varepsilon_2\in(0,\varepsilon_0),$ such that for any $\varepsilon\in(0,\varepsilon_2),$ it holds that
\[\|\nu^{\zeta,\varepsilon}-\varrho^*\|_{L^p(\mathbb R^2)}<\epsilon,\quad\forall\,\zeta\in\Xi^0_\varepsilon.\]

\end{lemma}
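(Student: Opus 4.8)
The plan is to argue by contradiction and reduce everything to the rigidity statement in Lemma \ref{bgu}. Suppose the conclusion fails; then there are $\epsilon>0$, a sequence $\varepsilon_n\to0^+$, and $\zeta_n\in\Xi^0_{\varepsilon_n}$ with $\|\nu_n-\varrho^*\|_{L^p(\mathbb R^2)}\ge\epsilon$, where $\nu_n:=\nu^{\zeta_n,\varepsilon_n}$. The first thing I would record is that $\nu_n$ is an \emph{equimeasurable rearrangement} of $\varrho$: a direct change of variables in \eqref{ggla} shows $|\{\nu_n>s\}|=|\{\varrho>s\}|$ for every $s$, so $\nu_n\in\mathcal R(\varrho)$. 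Consequently $\|\nu_n\|_{L^p(\mathbb R^2)}=\|\varrho\|_{L^p(\mathbb R^2)}$ for all $n$ and, crucially, the symmetric-decreasing rearrangement of $\nu_n$ is exactly $\varrho^*$. Moreover $\nu_n\ge0$, and $\int_{\mathbb R^2}\mathbf x\,\nu_n\,d\mathbf x=\mathbf 0$ because $\mathbf x^{\zeta_n,\varepsilon_n}$ is the center of mass of $\zeta_n$; by Lemma \ref{yayaya} together with \eqref{ggla} the supports satisfy ${\rm supp}(\nu_n)\subset\overline{B_{C_9}(\mathbf 0)}$. Thus the $\nu_n$ meet all the hypotheses \eqref{lily} of Lemma \ref{bgu} with $\alpha=C_9$.

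Since $p>2$ and the supports lie in a fixed ball, $\{\nu_n\}$ is bounded in $L^2(\mathbb R^2)$, so after passing to a subsequence $\nu_n\rightharpoonup\nu$ weakly in $L^2$ (and, being bounded in $L^p$, weakly in $L^p$ to the same limit). The weak limit inherits nonnegativity, zero center of mass, and support in $\overline{B_{C_9}(\mathbf 0)}$. Because $\nu_n^*=\varrho^*$ for every $n$, applying Lemma \ref{bgu} with $u=\nu$ and $v=\varrho^*$ yields $W(\nu)\le W(\varrho^*)$, where I write $W(u)=\int_{\mathbb R^2}\int_{\mathbb R^2}\ln\frac{1}{|\mathbf x-\mathbf y|}u(\mathbf x)u(\mathbf y)\,d\mathbf x\,d\mathbf y$, with equality \emph{if and only if} $\nu=\varrho^*$.

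The heart of the argument, and what I expect to be the main obstacle, is to extract the reverse inequality $W(\nu)\ge W(\varrho^*)$ from the maximality of $\zeta_n$, thereby forcing equality. Here I would use the same comparison function $v_n(\mathbf x)=\varepsilon_n^{-2}\varrho^*\!\left((\mathbf x-\hat{\mathbf x})/\varepsilon_n\right)$ as in the proof of Lemma \ref{lem3020}, which lies in $\mathcal R(\varrho^{\varepsilon_n})$ for large $n$, so that $(E-qI)(\zeta_n)\ge(E-qI)(v_n)$. Splitting the kinetic energy as $4\pi E(u)=W(u)+\int_\Pi\int_\Pi\ln|\mathbf x-\bar{\mathbf y}|\,u(\mathbf x)u(\mathbf y)\,d\mathbf x\,d\mathbf y$ and rescaling through \eqref{ggla}, a change of variables gives $W(\zeta_n)=W(\nu_n)-\kappa^2\ln\varepsilon_n$ and likewise $W(v_n)=W(\varrho^*)-\kappa^2\ln\varepsilon_n$, so the singular $\ln\varepsilon_n$ terms cancel in the comparison. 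Passing to the limit then needs three ingredients: (a) the reflected interaction terms $\int_\Pi\int_\Pi\ln|\mathbf x-\bar{\mathbf y}|\zeta_n\zeta_n$ and its analogue for $v_n$ both tend to $\kappa^2\ln(2\hat x_2)$, which follows because ${\rm diam}(V_{\zeta_n})\le C_9\varepsilon_n\to0$ (Lemma \ref{lem3010}) and $\mathbf x^{\zeta_n,\varepsilon_n}\to\hat{\mathbf x}$ (Lemma \ref{lem3020}) concentrate the mass at $\hat{\mathbf x}=(0,\hat x_2)$ with $\hat x_2>0$, where $\ln|\mathbf x-\bar{\mathbf y}|$ is continuous and bounded; (b) the impulse terms $I(\zeta_n)=\kappa\,x_2^{\zeta_n,\varepsilon_n}$ and $I(v_n)=\kappa\hat x_2$ converge to the same value by Lemma \ref{lem3020}; and (c) $W(\nu_n)\to W(\nu)$, which holds because convolution with $\ln\frac{1}{|\cdot|}$ over the fixed ball $\overline{B_{C_9}(\mathbf 0)}$ is a compact operator, carrying the weak convergence $\nu_n\rightharpoonup\nu$ into strong convergence of the associated potentials. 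After the cancellation the limiting inequality reads precisely $W(\nu)\ge W(\varrho^*)$.

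Combining the two bounds gives $W(\nu)=W(\varrho^*)$, and the rigidity clause of Lemma \ref{bgu} forces $\nu=\varrho^*$; in particular the whole subsequence satisfies $\nu_n\rightharpoonup\varrho^*$ in $L^p(\mathbb R^2)$. Finally I would upgrade weak convergence to strong convergence using that $L^p(\mathbb R^2)$ is uniformly convex for $1<p<\infty$: since $\nu_n\rightharpoonup\varrho^*$ while $\|\nu_n\|_{L^p(\mathbb R^2)}=\|\varrho^*\|_{L^p(\mathbb R^2)}$ for all $n$, the Radon--Riesz property yields $\nu_n\to\varrho^*$ strongly in $L^p(\mathbb R^2)$, contradicting $\|\nu_n-\varrho^*\|_{L^p(\mathbb R^2)}\ge\epsilon$. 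This contradiction establishes the uniform convergence asserted in the lemma.
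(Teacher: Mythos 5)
Your proposal is correct and follows essentially the same route as the paper: comparison with the test function $v_n(\mathbf x)=\varepsilon_n^{-2}\varrho^*((\mathbf x-\hat{\mathbf x})/\varepsilon_n)$, cancellation of the $\ln\varepsilon_n$ singularities after rescaling, convergence of the reflected-interaction and impulse terms via Lemmas \ref{lem3010}--\ref{lem3020}, the Burchard--Guo rigidity of Lemma \ref{bgu} applied with $u_n=\nu_n$ and $\nu_n^*=\varrho^*$, and the final upgrade from weak to strong $L^p$ convergence by uniform convexity. Your explicit justification of $W(\nu_n)\to W(\nu)$ via compactness of the logarithmic kernel on a fixed ball is a welcome detail that the paper leaves implicit, but it is not a different argument.
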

\begin{proof}

Fix a sequence  $\{\varepsilon_n\}_{n=1}^{+\infty}\subset(0,\varepsilon_0)$ such that   $\varepsilon_n\to0^+$ as $n\to+\infty$, and a sequence  $\{\zeta_n\}_{n=1}^{+\infty}, \zeta_n\in  \Xi_{\varepsilon_n}^0$. 
It suffices to show that $\nu^{\zeta_n,\varepsilon_n}\to\varrho^*$ in $L^p(\mathbb R^2)$ as $n\to+\infty.$ For simplicity, we denote $\nu_n=\nu^{\zeta_n,\varepsilon_n}.$ By  the definition of $\nu^{\zeta_n,\varepsilon_n}$ (see \eqref{ggla}), it is clear that 
 \begin{equation}\label{yyss}
 \zeta_n(\mathbf x)=\frac{1}{\varepsilon_n^2}\nu_n\left(\frac{\mathbf x-\mathbf x^{\zeta_n,\varepsilon_n}}{\varepsilon_n}\right).
 \end{equation}
Moreover, for each $n$ it holds that
\begin{equation}\label{eassy}
\int_{\mathbb R^2}\mathbf x\nu_n(\mathbf x)d\mathbf x=\mathbf 0, \quad\nu_n\in \mathcal R(\varrho,\mathbb R^2),\quad {\rm supp}(\nu_n)\subset \overline{B_{C_9}(\mathbf 0)}.
\end{equation}
Here $ \mathcal R(\varrho,\mathbb R^2)$ stands for the set of all equimeasurable rearrangements of $\varrho$ in $\mathbb R^2,$ that is,
\[\mathcal R(\varrho,\mathbb R^2)=\left\{v\in L_{\rm loc}^1(\mathbb R^2)\mid |\{\mathbf x\in\mathbb R^2\mid v(\mathbf x)>s\}|=|\{\mathbf x\in\Pi\mid \varrho(\mathbf x)>s\}|\,\,\forall\,s\in\mathbb R\right\}.\]
Up to a subsequence, we assume that $\nu_n\rightharpoonup \nu$ in $L^p(\mathbb R^2)$ as $n\to+\infty.$ It is easy to check that ${\rm supp}(\nu)\subset \overline{B_{C_9}(\mathbf 0)}$ and $\nu_n\rightharpoonup \nu$ in $L^2(\mathbb R^2)$ as $n\to+\infty.$

Let $v_n$ be defined by \eqref{defov}.
 Then $v_n\in\mathcal R(\varrho^{\varepsilon_n})$ if $n$ is large enough. Hence
 \[(E-qI)(\zeta_n)\geq (E-qI)(v_n),\]
which can be written as
 \begin{equation}\label{sptt5}
 \begin{split}
 & \int_\Pi\int_\Pi\ln\frac{1}{|\mathbf x-\mathbf y|}\zeta_n(\mathbf x)\zeta_n(\mathbf y)d\mathbf xd\mathbf y+ \int_\Pi\int_\Pi\ln {|\mathbf x-\bar{\mathbf y}|}\zeta_n(\mathbf x)\zeta_n(\mathbf y)d\mathbf xd\mathbf y-4\pi q\int_\Pi x_2\zeta_n(\mathbf x)d\mathbf x\\
 \geq & \int_\Pi\int_\Pi\ln\frac{1}{|\mathbf x-\mathbf y|}v_n(\mathbf x)v_n(\mathbf y)d\mathbf xd\mathbf y+ \int_\Pi\int_\Pi\ln|\mathbf x-\bar{\mathbf y}|v_n(\mathbf x)v_n(\mathbf y)d\mathbf xd\mathbf y-4\pi q\int_\Pi x_2 v_n(\mathbf x)d\mathbf x.
 \end{split}
 \end{equation}
 By the definition of $v_n$, it is clear that as $n\to+\infty$
 \begin{equation}\label{3k1}
 \frac{1}{4\pi}\int_\Pi\int_\Pi\ln|\mathbf x-\bar{\mathbf y}|v_n(\mathbf x)v_n(\mathbf y)d\mathbf xd\mathbf y-q\int_\Pi x_2 v_n(\mathbf x)d\mathbf x\to \frac{\kappa^2}{4\pi}\ln(2\hat x_2)-\kappa q \hat x_2.
 \end{equation}
By  Lemma \ref{yayaya}  and Lemma \ref{lem3020}, it is also easy to check that  as $n\to+\infty$
 \begin{equation}\label{3k2}
 \frac{1}{4\pi}\int_\Pi\int_\Pi\ln|\mathbf x-\bar{\mathbf y}|\zeta_n(\mathbf x)\zeta_n(\mathbf y)d\mathbf xd\mathbf y-q\int_\Pi x_2 \zeta_n(\mathbf x)d\mathbf x\to \frac{\kappa^2}{4\pi}\ln(2\hat x_2)-\kappa q \hat x_2.
 \end{equation}
 Therefore \eqref{sptt5}, \eqref{3k1} and \eqref{3k2} together yield
  \begin{equation}\label{sptt6}
\int_\Pi\int_\Pi\ln\frac{1}{|\mathbf x-\mathbf y|}\zeta_n(\mathbf x)\zeta_n(\mathbf y)d\mathbf xd\mathbf y\geq \int_\Pi\int_\Pi\ln\frac{1}{|\mathbf x-\mathbf y|}v_n(\mathbf x)v_n(\mathbf y)d\mathbf xd\mathbf y+h_n
 \end{equation}
 for some sequence $\{h_n\}_{n=1}^{+\infty}$ satisfying $h_n\to 0 $ as $n\to+\infty.$
 Inserting \eqref{defov} and \eqref{yyss} into \eqref{sptt6}, using the fact that $\nu_n\rightharpoonup \nu$ in $L^2(\mathbb R^2)$ as $n\to+\infty$, and passing to the limit $n\to+\infty$, we can obtain
 \[\int_{\mathbb R^2}\int_{\mathbb R^2}\ln\frac{1}{|\mathbf x-\mathbf y|}\nu(\mathbf x)\nu(\mathbf y) d\mathbf xd\mathbf y\geq \int_{\mathbb R^2}\int_{\mathbb R^2}\ln\frac{1}{|\mathbf x-\mathbf y|}\varrho^*(\mathbf x)\varrho^*(\mathbf y) d\mathbf xd\mathbf y.\]
Now we can apply Lemma \ref{bgu}  (choosing $u_n=\nu_n$ and noticing $\nu_n^*=v=\varrho^*$  therein) to get $\nu=\varrho^*.$ 
 Note that the condition \eqref{lily} in Lemma \ref{bgu} is satisfied by \eqref{eassy}.
 
To conclude, we have obtained $\nu_n\rightharpoonup \varrho^*$ in $L^p(\mathbb R^2)$ as $n\to+\infty.$ Taking into account the fact that $\nu_n,\varrho^* \in \mathcal R(\varrho,\mathbb R^2)$, we get $\|\nu_n\|_{L^p(\mathbb R^2)}=\|\varrho^*\|_{L^p(\mathbb R^2)}$ for each $n$, which implies 
$\nu_n\to \varrho^*$ in $L^p(\mathbb R^2)$ as $n\to+\infty$ by uniform convexity. 
\end{proof}

\begin{proof}[Proof of Theorem \ref{thm1}]
(i)-(iii) follow from Lemmas \ref{lemm1}-\ref{lemm3} and Lemma \ref{lem31}. (iv) -(vi) follow from Lemmas  \ref{lem3010}-\ref{lem3030}.

\end{proof}
\begin{proof}[Proof of Corollary \ref{coro1}]
Choose $q=1,\varepsilon=\lambda$ in 
 Theorem \ref{thm1} and use Lemma \ref{lem31}.

\end{proof}

\section{Further discussions}

First we discuss a slightly different  maximization problem related to this paper.
Consider the maximization of $E-qI$ over
 \begin{equation*}
 \mathcal A_\varepsilon=\{v\in\mathcal R(\varrho^\varepsilon)\mid {\rm supp}(v)\subset B_{r_0}(\hat{\mathbf x}),\,\,v\mbox{ is Steiner-symmetric about the $x_2$ axis}\},
 \end{equation*}
where $r_0>0$ is fixed. By applying Theorem 7 in \cite{B1}, it is not hard to prove that $E-qI$ attains its maximum value on $\mathcal A_\varepsilon$. Then we can perform  similar asymptotic estimates as in Section 3 to obtain a family of concentrated traveling vortex pairs with the same asymptotic behavior. Since the elements in $\mathcal A_\varepsilon$ are   supported in $B_{r_0}(\hat{\mathbf x}),$   the corresponding asymptotic estimates  become much easier. But as a cost, it is hard to analyze the stability of these vortex pairs. The biggest difficulty comes from the fact  that $\mathcal A_\varepsilon$ is not invariant for the vorticity equation.
As illustrated in \cite{B6},  in order to prove stability we need to compare the energy of the maximizers with the energy of all the elements in $\overline{\mathcal{\mathcal R(\varrho^\varepsilon)}^W}$, which is  a hard problem. In fact, it is also the main difficulty appearing in the study of  stability of other concentrated vortex flows. See \cite{CW1,CW2,W2} for example.
Now with Theorem \ref{thm1}, we immediately see that this maximization problem  in fact gives the same solutions as \eqref{mm2} does  if $\varepsilon$ is sufficiently small, from which stability follows immediately.

 Second, we discuss another variational problem related to traveling vortex pairs with prescribed impulse,  i.e., 
the maximization of   $E$ over 
\[\mathcal S_{i}=\left\{v\in\overline{\mathcal R(\varrho)^W}\mid I(v)=i\right\},\]
where $i>0$ is   fixed.
 Burton studied this problem in  \cite{B10} and  proved  that for sufficiently large $i$,  $E$ attains its maximum value on $\mathcal S_i$ and any maximizer must be contained in $\mathcal R(\varrho)$ . A form of orbital stability was also deduced. An interesting question is to study the limiting behavior for the maximizers as $i_0\to+\infty.$ Following the idea of this paper, a possible way is to study the scaled problem and  deduce suitable  asymptotic estimates. However,
after some tries, we found that the method in this paper can not be applied directly to tackle this problem. The main reason is that we are not able to obtain  appropriate uniform estimates for the traveling speed, which is not fixed  in this case.  This is an interesting further work.

{\bf Acknowledgements:}
{G. Wang was supported by National Natural Science Foundation of China (12001135, 12071098) and China Postdoctoral Science Foundation (2019M661261, 2021T140163).}

\phantom{s}
 \thispagestyle{empty}

\end{document}